\pgfplotsset{width=10cm,compat=1.9}
\numberwithin{equation}{section}
\newtheorem{thm}{Theorem}[section]
\newtheorem{lem}{Lemma}[section]
\newtheorem{prop}{Proposition}[section]
\newtheorem{cor}{Corollary}[section]
\theoremstyle{definition}
\newtheorem{dfn}{Definition}[section]
\theoremstyle{remark}
\newtheorem{rem}{Remark}
\begin{document}

\title{Ancient solutions to the Curve Shortening Flow spanning the halfplane}

\author{John Man Shun Ma}
\address[]{}
\email{john.ma311@rutgers.edu}

\begin{abstract}
In this note we construct an infinite family of compact ancient solutions to the Curve Shortening Flow which span the halfplane. 
\end{abstract}

\date{\today}

\maketitle

\markboth{}{}

\section{Introduction}
The Curve Shortening Flow (CSF) is a family of immersed curves which moves along the curvature vector $\vec \kappa$. The CSF is the one dimensional example of the Mean Curvature Flow (MCF), the most studied extrinsic geometric flows. 

In this paper, we restrict our attention to ancient solutions to the CSF. A solution $\{ \gamma_t \}$ to the CSF is called ancient if $\gamma_t$ is defined in $t\in (-\infty, T_0)$ for some $T_0$. 

Besides the straight lines and the shrinking circles, the first non-trivial examples of ancient solutions are given by Abresch and Langer \cite{AL}, where they give a complete classification of compact self-shrinking solutions to the CSF. This result is generalized in \cite{Ha}, where Halldorsson gives a classification of all self-similar solutions to the CSF. Here self-similar means that the CSF evolves by a combination of rotation, translation and scaling. The first ancient solution which is not self-similar is the paperclip solution, also known as the Angenent oval (introduced in \cite{NLW}, \cite{A}). The Angenent oval is a convex embedded compact ancient solution lying inside a slab, which can be formed by gluing two Grim Reapers moving in opposite directions. More examples are constructed in \cite{AQ}, \cite{Y} by gluing finitely many Grim Reapers. 

Although ancient solutions to the CSF exist in abundance, there are only four convex embedded one up to ambient isometry and parabolic rescaling. Daskalopoulos, Hamilton, and Sesum show in \cite{DHS} that the shrinking circle and the Angenent oval are the only compact embedded ancient solutions. Recently, Bourni, Langford and Tinaglia show in \cite{BLT} that the stationary line and the Grim Reaper are the only noncompact convex embedded example. Thus the classification of ancient embedded convex solutions to the CSF is complete. 

On the other hand, from the constructions in \cite{AQ}, \cite{Y}, \cite{Ha}, it is unlikely to obtain a complete classification of all ancient solutions. One may wonder what geometric conditions can be imposed in order to obtain some classification results. 

Recently, Chini and M{\o}ller prove in \cite{CM} a bi-halfspace property for ancient codimension one MCF: Let $\{M_t : t\in (-\infty, T_0)\}$ be an $n$-dimensional properly immersed ancient solution to the MCF in $\mathbb R^{n+1}$. For each $t$, let $\operatorname{Conv}(M_t)$ be the convex hull of $M_t$ in $\mathbb R^{n+1}$. Then up to ambient isometry and parabolic rescaling, 
\begin{equation}
\mathscr M:= \bigcup_{t < T_0} \operatorname{Conv}(M_t) 
\end{equation}
is either 
\begin{itemize}
\item the hyperplane $\mathbb L= \{ x \in \mathbb R^{n+1} : x_{n+1} = 0\}$,
\item the slab $\mathbb S = \{x\in \mathbb R^{n+1} : 0<x_{n+1}<1\}$, 
\item the halfspace $\mathbb H = \{ x\in \mathbb R^{n+1}: x_{n+1} >0\}$
\item or the whole $\mathbb R^{n+1}$. 
\end{itemize}

As a result, the set of all properly immersed ancient solutions is divided into four classes, and one might ask if one can classify anyone of these. The stationary hyperplane is the only one with $\mathscr M =\mathbb L$. On the other hand, there are lots of ancient solutions with $\mathscr M = \mathbb R^{n+1}$, which (e.g.) include all properly immersed self-shrinking solutions \cite{CE}. 

For the remaining cases, note that the Grim Reaper and the Angenent oval have $\mathscr M = \mathbb S$, so are the examples in \cite{AQ}. In higher dimensions, compact convex ancient solutions of the MCF lying inside a slab is constructed in \cite{BLT0}, \cite{Wang}. Indeed, Wang shows in \cite{Wang} that for any convex ancient solutions, $\mathscr M$ is either a slab or the whole space. 

It is interesting that, to the author's knowledge (see also p.4 in \cite{CM}), there isn't any example of compact ancient solution which satisfies $\mathscr M = \mathbb H$. The main goal of this paper is to construct an infinite family of such examples in the case of CSF.

\begin{thm} \label{Compact example}
There is an infinite family of compact immersed ancient solutions $\{\mathcal C(t)\}_{t<T_0}$ to the CSF so that 
$$\bigcup_{t<T_0} \operatorname{Conv} (\mathcal C(t))$$
is the halfspace. Moreover, $\mathcal C(t)$ becomes embedded at some time before extinction.
\end{thm}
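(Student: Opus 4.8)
\section*{Strategy of the proof}

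The plan is to obtain each solution $\mathcal C(t)$ as a smooth limit of non-ancient solutions of the CSF emanating from explicit large immersed curves, in the spirit of the constructions of compact convex ancient solutions inside a slab in \cite{BLT0}, \cite{Wang}, but with the geometry arranged so that the limit opens up into a halfplane rather than staying inside a slab. Note first that $\mathcal C(t)$ cannot be embedded for all $t$: by \cite{DHS} a compact embedded ancient solution is the shrinking circle or the Angenent oval, for which $\mathscr M$ is $\mathbb R^{2}$ or a slab, never $\mathbb H$; so the construction must produce curves with self-intersections for $t$ very negative (which, by the theorem, disappear before extinction). Write $\mathbb H=\{(x,y)\in\mathbb R^{2}:y>0\}$. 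The comparison solutions I would use are the translating Grim Reapers $G_{w}$ confined to a strip of width $w$, which translate at speed of order $1/w$ (so wide ones move slowly), and, for each $\delta>0$, a parabolically rescaled Angenent oval $A_{\delta}$, a compact embedded ancient solution confined to $\{0<y<\delta\}$. By the avoidance principle, a solution disjoint from and enclosing $A_{\delta}$ at one time does so at all later times; keeping one fixed barrier $A_{1}$ inside our curves will force the limit $\mathcal C(t)$ to enclose the region bounded by $A_{1}(t)$ — for $t\to-\infty$ a long thin loop hugging $\partial\mathbb H$ — which is what pins the bottom of $\mathcal C(t)$ to $\partial\mathbb H$, while Grim Reapers placed above prevent $\mathcal C(t)$ from escaping $\overline{\mathbb H}$ and control how the curve spreads laterally.

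For each $k$ I would fix a closed immersed curve $\Gamma_{k}\subset\mathbb H$, reflection symmetric about the $y$-axis, of turning number one, with exactly $2m$ self-intersections forming $m$ ``tucks'' whose removal makes the curve embedded and convex, enclosing $A_{1}$, reaching height and horizontal extent of order $k$, and dipping close to $\partial\mathbb H$. Let $\gamma^{k}_{t}$ solve the CSF with $\gamma^{k}_{-T_{k}}=\Gamma_{k}$ up to its extinction time; after a parabolic rescaling, normalize so that all the $\gamma^{k}$ become extinct at a common time $T_{0}$ and have a fixed length at time $T_{0}-1$, with $T_{k}\to\infty$. On each interval $[-S,T_{0}-\tfrac1j]$ the curves $\gamma^{k}_{t}$ then have uniformly bounded length (they are trapped between the barriers, whose sizes are controlled there), uniformly bounded curvature (interior estimates, since each is a definite time from extinction and has length bounded below), and at most $2m$ self-intersections (monotonicity of the number of self-intersections under the CSF). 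Passing to a subsequence, $\gamma^{k}_{t}\to\mathcal C(t)$ smoothly on compact subsets of $(-\infty,T_{0})\times S^{1}$; the limit is a nonempty compact immersed ancient solution, nontrivial by the normalization, of turning number one, reflection symmetric, with at most $2m$ self-intersections, enclosing $A_{1}(t)$ for all $t<\min\{T_{0},T_{1}\}$.

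It remains to identify $\bigcup_{t<T_{0}}\operatorname{Conv}(\mathcal C(t))$ and to verify embeddedness near extinction. Since each $\gamma^{k}_{t}$ lies above $A_{1}$ one gets $\mathcal C(t)\subset\{y\ge0\}$, and then $\mathcal C(t)\subset\{y>0\}$ by the strong maximum principle applied to the distance from the stationary line $\{y=0\}$; hence $\bigcup_{t<T_{0}}\operatorname{Conv}(\mathcal C(t))\subseteq\mathbb H$. Conversely, $\operatorname{Conv}(\mathcal C(-s))$ contains the region bounded by $A_{1}(-s)$, a loop of width of order $\sqrt{s}$ inside $\{0<y<1\}$, together with a point of $\mathcal C(-s)$ at height $H(s)$, and one shows $H(s)\to\infty$ as $s\to\infty$ — the construction keeps a high point of $\Gamma_{k}$ high (a very wide Grim Reaper cap descends slowly), and alternatively the enclosed area of $\mathcal C(-s)$ grows like $2\pi s$ while the curve is not contained in any fixed slab. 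The triangles on such a wide low base with apex at height $H(s)$ exhaust $\mathbb H$, so $\bigcup_{t<T_{0}}\operatorname{Conv}(\mathcal C(t))=\mathbb H$. For embeddedness: as $t\uparrow T_{0}$ the curve shrinks to a point, and a parabolic rescaling about the extinction point subconverges to a compact self-shrinker of turning number one, which by the classification of Abresch and Langer \cite{AL} is the round circle; hence $\mathcal C(t)$ is embedded for $t$ close to $T_{0}$, i.e.\ its self-intersections have all disappeared by some $t_{1}<T_{0}$. Finally, varying the number $m$ of tucks in the $\Gamma_{k}$ produces ancient solutions that are pairwise non-congruent even up to ambient isometry and parabolic rescaling (the maximal number of self-intersections over the lifetime differs), which is the claimed infinite family.

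The hard part will be the joint control in the last two steps: choosing the $\Gamma_{k}$ and the normalization so that the limit is nondegenerate for every $t<T_{0}$ — a genuine ancient solution, not a line or a point after rescaling — while forcing, as $t\to-\infty$, the curve simultaneously to enclose arbitrarily wide loops along $\partial\mathbb H$, to reach arbitrarily large height, and never to leave $\mathbb H$. Making the Grim Reaper and Angenent-oval barriers cooperate with the parabolic rescaling — ruling out in particular that the bottom of the curve drifts upward so the solution collapses into a slab, or that it spreads laterally so fast that $\mathscr M$ becomes all of $\mathbb R^{2}$ — together with the uniform length and curvature bounds for the compactness argument, is where the bulk of the technical work lies.
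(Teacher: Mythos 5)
Your proposal takes a different route from the paper (limits of non-ancient flows started from curves with a \emph{fixed} number $2m$ of self-intersections, trapped by Angenent-oval and Grim Reaper barriers), but it has a genuine gap precisely at the step that makes the theorem nontrivial: forcing the backward limit to open into a halfplane rather than a slab. Neither of your two arguments for $H(s)\to\infty$ works. The enclosed-area argument is inconclusive because the Angenent oval itself has area growing like $2\pi s$ while remaining in a fixed slab (the area is absorbed by horizontal spreading); your added clause ``while the curve is not contained in any fixed slab'' is exactly the statement to be proved, so the reasoning is circular. The ``wide Grim Reaper cap descends slowly'' argument is a statement about the structure of the ancient limit at very negative times, not about the initial curves $\Gamma_k$: after flowing for time $T_k+t\to\infty$ nothing in your normalization prevents the high part of $\gamma^k_t$ from having collapsed toward the bottom, so the limit could perfectly well lie in a slab (or degenerate). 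Note moreover that in the paper the number of self-intersections of $\mathcal C(t)$ is unbounded as $t\to-\infty$: the curve reaches heights $2\pi h_n\to\infty$ by stacking more and more Grim Reaper levels joined by figure 8s that unfold at times $T_n\to-\infty$. With a fixed $m$, the expected backward limits are finite gluings of Grim Reapers as in \cite{AQ}, all of which lie in slabs, so your construction needs a mechanism that is entirely absent from the sketch.

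A second gap is that singularities before extinction are never ruled out. The flows $\gamma^k_t$ start from immersed curves with $m$ tucks; such loops typically contract and produce cusps long before extinction, so already defining every $\gamma^k$ up to a common extinction time $T_0$, normalizing length at $T_0-1$, and asserting the flow shrinks to a point is unjustified, and the same issue afflicts the limit: the self-intersections must disappear by pairs of intersection points annihilating at a tangency (loops opening up), not by loops pinching. Arranging this is where the paper invests essentially all of its technical work: the conserved/controlled area estimates (Lemmas \ref{area bound between g_n and o_n}, \ref{area between g_n and G^-_n}, \ref{area between g_n and g_m}), the $C^0$ estimate from area bounds (Lemma \ref{C^0 control of C}), and the embedded barriers $\mathcal B_n$ (Lemma \ref{B(t) becomes embedded}, Proposition \ref{embedded of C_n(t)}) forcing each figure 8 to unfold at a controlled time $T^e_n$. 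Your blow-up argument at extinction (rescaled limit is a compact turning-number-one shrinker, hence the circle by \cite{AL}) presupposes both that no earlier singularity occurs and that the rescalings converge to a compact limit with multiplicity one; neither is established. Likewise, the uniform length bound (``trapped between barriers'') and curvature bound (``a definite time from extinction'') used for compactness do not follow from confinement alone for immersed curves. Finally, a minor point: your opening assertion that $\mathcal C(t)$ cannot be embedded for all $t$ by \cite{DHS} is in tension with the paper's remark that embedded compact examples with $\mathscr M=\mathbb H$ are not known to be excluded (the classification in \cite{DHS} concerns convex solutions), so it should not be used as an unqualified input.
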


In the above theorem, we consider only compact solutions since noncompact examples can be constructed easily by gluing infinitely many Grim Reapers. This can be done using similar techniques in \cite{Y}, although that is not explicitly written down.

The examples constructed in Theorem \ref{Compact example} are not embedded. It is not known if there is any embedded compact ancient solution to CSF with $\mathscr M =\mathbb H$, or if the Angenent oval is the only compact embedded example inside a slab.

Next we briefly describe the proof of Theorem \ref{Compact example}. Recall that in \cite{AQ}, \cite{Y}, Angenent and You construct examples of ancient solutions to the CSF by gluing either finitely or infinitely many Grim Reapers along the common asymptotes. Thus their examples either lie in a slab, or is non-compact. In our situation we modify their construction. Roughly speaking, the ancient solutions constructed in Theorem \ref{Compact example} have the following property: there is a sequence of compact ancient solutions to the CSF $\{ \mathcal C_n(t)\}$ so that $\mathcal C_0(t)$ is the Angenent oval, and there is a sequence of time 
$$T_1 > \cdots > T_n > \cdots , \ \ T_n \to -\infty, $$
so that for each $n\in \mathbb N$ (see Figure 1), 
\begin{itemize}
\item when $t << T_n$, $\mathcal C_{n}(t)$ is formed by gluing a figure 8 (modeled by gluing four Grim Reapers) to $\mathcal C_{n-1}(t)$, 
\item when $t \sim T_n$, the figure 8 in $\mathcal C_n(t)$ disappears, and $\mathcal C_n(t)$ becomes embedded locally, and 
\item when $t>> T_n$, $\mathcal C_n(t)$ is close to $\mathcal C_{n-1}(t)$. 
\end{itemize} 
Then the ancient solutions constructed in Theorem \ref{Compact example} is the limit of $\{ \mathcal C_n(t)\}$ as $n\to \infty$. 

\bigskip

\begin{center}
\begin{tikzpicture}
\draw (-5,-0.3) .. controls (3,-0.3) .. (3,0);
\draw (5,-0.3) .. controls (-3,-0.3) .. (-3,0);
\draw (3,0) .. controls (3,0.3) and (-4, 0.3).. (-4,0.6);
\draw (-3,0) .. controls (-3,0.3) and (4, 0.3).. (4,0.6);\draw (4,0.6) .. controls (4, 0.9) .. (0,0.9);
\draw (-4,0.6) .. controls (-4, 0.9) .. (0,0.9);\end{tikzpicture}
\end{center}
\bigskip

\begin{center}
\begin{tikzpicture}
\draw (-5,-0.3) .. controls (1,-0.3) .. (1,0);
\draw (5,-0.3) .. controls (-1,-0.3) .. (-1,0);
\draw (1,0) .. controls (1,0.3) and (-2, 0.3).. (-2,0.6);
\draw (-1,0) .. controls (-1,0.3) and (2, 0.3).. (2,0.6);\draw (2,0.6) .. controls (2, 0.9) .. (0,0.9);
\draw (-2,0.6) .. controls (-2, 0.9) .. (0,0.9);\end{tikzpicture}
\end{center}

\bigskip

\begin{center}
\begin{tikzpicture}
\draw (-5,-0.3) .. controls (0,-0.3) .. (0,-0.1);
\draw (5,-0.3) .. controls (0,-0.3) .. (0,-0.1);
\draw (0,-0.1) .. controls (0,0.1) and (-1, 0.1).. (-1,0.3);
\draw (0,-0.1) .. controls (0,0.1) and (1, 0.1).. (1,0.3);
\draw (-1,0.3) .. controls (-1,0.7) .. (0,0.7);
\draw (1,0.3) .. controls (1,0.7) .. (0,0.7);
\end{tikzpicture}
\end{center}

\bigskip

\begin{center}
\begin{tikzpicture}
\draw (-5,-0.3) .. controls (-0.2,-0.3) .. (-0.2,-0.1);
\draw (5,-0.3) .. controls (0.2,-0.3) .. (0.2,-0.1);
\draw (-0.2,-0.1) .. controls (-0.2,0.1) and (-0.7, 0.1).. (-0.7,0.3);
\draw (0.2,-0.1) .. controls (0.2,0.1) and (0.7, 0.1).. (0.7,0.3);
\draw (-0.7,0.3) .. controls (-0.7,0.6) .. (0,0.6);
\draw (0.7,0.3) .. controls (0.7,0.6) .. (0,0.6);
\end{tikzpicture}
\end{center}

\bigskip

\begin{center}
\begin{tikzpicture}
\draw (-5,-0.3) .. controls (-0.6,-0.4) and (-0.6, 0.5) .. (0,0.5);
\draw (5,-0.3) .. controls (0.6,-0.4) and (0.6, 0.5)  .. (0,0.5);
\end{tikzpicture}

\bigskip

\begin{center}
\begin{tikzpicture}
\draw (-5,-0.3) .. controls (-3,-0.3) and (-1, 0.1) .. (0,0.1);
\draw (5,-0.3) .. controls (3,-0.3) and (1, 0.1)  .. (0,0.1);
\end{tikzpicture}
\end{center}

Figure 1: A figure 8 disappears around time $T_n$. 
\end{center}

\bigskip
Next we compare the technical difficulties in proving Theorem \ref{Compact example} with those in \cite{AQ}, \cite{Y}. In their works, the $C^0$ error at the tip regions are controlled by proving the convexity there, which holds when $-t$ is large enough. In our situation, since we require the figure 8 to unfold and disappear, we need $C^0$ estimates also when $-t$ is not large. Also, the error is harder to control since we need to glue infinitely many Grim Reapers. 

To obtain the required $C^0$ estimates, we construct the sequence of ancient solutions $\{\mathcal C_n(t)\}$ in such a way that for each $t$, the (signed) area bounded by $\mathcal C_n(t)$ are uniformly bounded. Then we prove (in Lemma \ref{C^0 control of C}) a $C^0$ estimates directly using the area bound. We remark that some forms of area estimates is also essential in the construction in \cite{AQ}, \cite{Y}.

In section \ref{C_n(t)}, we describe the construction of the sequence of ancient solutions $\{\mathcal C_n(t)\}_{n=0}^\infty$ and prove some properties about them. In section \ref{C^0 estimates}, we derive some $C^0$ estimates between $\mathcal C_n(t)$, $\mathcal C_m(t)$ for $m\neq n$. In section \ref{Proof of Thm}, we prove Theorem \ref{Compact example}. 

{\bf Acknowledgement}: The author would like to thank Man Chun Lee, Niels Martin M{\o}ller and Natasa Sesum for the discussion and interest in this work. The author would also thank the referee for pointing out a mistake in the first draft and for the extensive comments which improved greatly the exposition of this paper.

\section{Compact ancient solutions $\mathcal C_n(t)$} \label{C_n(t)}
First we recall some basic facts about Curve Shortening Flow (CSF). For any immersed curve $\gamma : I \to \mathbb R^2$, the curvature is given by 
$$\kappa = \langle \nabla_ e e, \nu\rangle, $$
where $e = \gamma'/|\gamma'|$ is the unit vector field along $\gamma$ and $\nu$ is the unit normal vector fields. The CSF is a family of immersed curve $\{ \gamma_t : I \to \mathbb R^2: t\in [t_1, t_2]\}$ so that  
\begin{equation*}
\frac{\partial \gamma_t}{\partial t} = \vec \kappa,
\end{equation*}
where $\vec \kappa = \kappa \nu$ is the curvature vector. The CSF is the one dimensional version of the Mean Curvature Flow (MCF).

When an immersed curve $\gamma$ is given by a graph $u$, the curvature is given by 
\begin{equation} \label{curvature represented as graph}
\kappa = \frac{u''}{(1+ u'^2 )^{3/2}},
\end{equation}
and when $\gamma_t$ is a family of immersed curves given by a graph $u = u(t, \cdot)$, then up to tangential diffeomorphisms, the CSF equation is equivalent to 
\begin{equation} \label{CSF equation by graph}
\frac{\partial u}{\partial t} = \frac{ u''}{1+ u'^2}.
\end{equation}

(\ref{CSF equation by graph}) has the following essential consequence: if $\gamma_t$ is a family of CSF which is given by a graph in $y$, that is, $\gamma_t(y) = (u(t, y), y)$ for some $ u(t, \cdot) : [a(t), b(t)] \to \mathbb R$ with $u(t, a(t)) = u(t, b(t))=0$, then 

\begin{equation} \label{derivative of area under graph}
\frac{d}{dt} \int _{a(t)}^{b(t)} u(t, y) dy = \theta (b(t)) - \theta (a(t)),
\end{equation}
where $\theta (y) = \arctan u' (y)$ is the angle the graph $u$ made with the $y$-axis. 

Next we recall the construction of a sequence $\{\mathcal C_n(t)\}_{n=1}^\infty$ of compact ancient solutions to the CSF in $\mathbb R^2$ formed by gluing finitely many Grim Reapers in \cite{AQ}. 

Let $\{a_n\}_{n=1}^\infty$ be any sequence of positive real numbers so that 
\begin{itemize}
 \item $a_n \ge 1$ for all $n\in \mathbb N$,
 \item $\sum a_n^{-1}$ diverges, and 
 \item $\sum a_n^{-2}$ converges. 
\end{itemize}
For example, $a_n = n$ satisfies the above conditions. Define $h_0 = 0$ and 
\begin{equation}
h_n = a_1^{-1} + a_2^{-1}+ \cdots + a_n^{-1},  \ \ \ \text{ for all } n\in \mathbb N. 
\end{equation}
Let $G: (0,\pi)\to \mathbb R$ be given by
\begin{align*}
G(y) = \ln\sin y.
\end{align*}
Then 
\begin{align*} 
\mathcal G(t):\ \{x =  G( y+\pi) -t\} 
\end{align*}
is a translating solution to the CSF. Any ambient isometry and parabolic rescaling of $\mathcal G_0(t)$ is called a Grim Reaper. 

For each $m\in \mathbb N$, define
\begin{align*}
G^-_m (y) &=  \frac{1}{a_m} G \left(a_m(y-2\pi h_{m-1})\right),\\
G^+_m(y) &= \frac{1}{a_m} G \left(a_m\left(y-2\pi h_{m-1} - a^{-1}_m \pi \right)\right).
\end{align*} 
and
\begin{align*}
\mathcal G^-_m (t)  &: \{x = - G^-_m(y) + a_m t \},\\
\mathcal G^+_m (t)  &: \{ x =  G^+_m(y) - a_m t\}, 
\end{align*} 
Note that for each $m\in \mathbb N$, $\mathcal G^-_m(t)$ and $\mathcal G^+_m(t)$ are Grim Reapers lying in the slab $\{ 2\pi h_{m-1} <y < 2\pi h_{m-1} + a^{-1}_m\pi\}$ and $\{2\pi h_{m-1} +a^{-1}_m \pi<y < 2\pi h_{m}\}$ respectively.  

Let $n \in \mathbb N$ and let $B_1, \cdots, B_n$ and $L$ be any positive numbers. We write 
\begin{equation} \label{definition of C_n}
C_n = B_1+ \cdots + B_n
\end{equation}
and for any $t < -C_n-1$, consider the following (portions of) Grim Reapers: 
\begin{align} 
\label{gluing specific G's 1} &\{\mathcal G_0(t): x\ge 0\},\\
\label{gluing specific G's 2} &\{\mathcal G^-_m(t + C_m): x\le 0\},\  \ m=1, 2, \cdots, n.\\
\label{gluing specific G's 3} &\{\mathcal G^+_m(t + C_m - La_m^{-2}), x\ge 0\}, \ \ m=1, 2, \cdots, n.
\end{align}
Note that $\mathcal G_0(t), \mathcal G^-_1(t)$ shares a common asymptotes $\{y=0\}$ and for $m=1, \cdots, n$, $\mathcal G^-_m(t + C_m)$ shares the common asymptotes $\{y = 2\pi h_{m-1}\}$, $\{y = 2\pi h_{m-1}+\pi a_m^{-1}\}$ with  $\mathcal G^+_{m-1}(t + C_{m-1} - La_{m-1}^{-2})$ and $\mathcal G^+_m (t+C_m - La_m^{-2})$ respectively. 

In the following we glue the above Grim Reapers along their common asymptotes to form a broken curve as in section 3.2 of \cite{AQ}. 

\begin{rem} \label{gluing}
In \cite{AQ}, the gluing is done using smooth functions. However, to simplify our argument (so that maximum principle is easily applicable) we choose the following specific gluing: 
\begin{enumerate}
\item [(I)] The Grim Reaper (\ref{gluing specific G's 1}) intersects $\{x=1\}$ at two points $(1, y_1)$, $(1, y_2)$ with $y_1<y_2$. Join $(0,-\pi)$ to $(1, y_1)$ and $ (1, y_2)$ to $(0,0)$ respectively by straight lines. 
\item [(II)] For each $m=1, 2, \cdots, n$, the Grim Reaper (\ref{gluing specific G's 2}) intersects $\{x = -a_m^{-1}\}$ at $(-a_m^{-1}, z^-_1)$, $(-a_m^{-1}, z^-_2)$ with $z^-_1<z^-_2$. Join  $(0,2\pi h_{m-1})$ to $(-a_m^{-1}, z^-_1)$ and $ (-a_m^{-1}, z^-_2)$ to $(0,2\pi h_{m-1}+ a^{-1}_m\pi )$ respectively by straight lines. 
\item [(III)] For each $m=1, 2, \cdots, n$, the Grim Reaper (\ref{gluing specific G's 3}) intersects $\{x = a_m^{-1}\}$ at $(a_m^{-1}, z^+_1)$, $(a_m^{-1}, z^+_2)$ with $z^+_1<z^+_2$. Join $(0,2\pi h_{m-1}+a^{-1}_m\pi)$ to $(a_m^{-1}, z^+_1)$ and $ (a_m^{-1}, z^+_2)$ to $(0,2\pi h_{m} )$ respectively by straight lines.
\end{enumerate}
\end{rem}

The resulting curve formed by gluing the Grim Reapers (\ref{gluing specific G's 1}), (\ref{gluing specific G's 2}) and (\ref{gluing specific G's 3}) as in Remark \ref{gluing} is graphical in $y$. Hence the curve is given by the graph of a function $\overline g_n(t,\cdot) : (-\pi, 2\pi h_n) \to \mathbb R$.

\begin{center}
\tikzset{every picture/.style={line width=0.75pt}} %set default line width to 0.75pt        

\begin{tikzpicture}[x=0.75pt,y=0.75pt,yscale=-0.7,xscale=0.7]
%uncomment if require: \path (0,285); %set diagram left start at 0, and has height of 285

%Curve Lines [id:da5686101284652529] 
\draw [color={rgb, 255:red, 0; green, 0; blue, 0 }  ,draw opacity=1 ][line width=1.5]    (272.3,200.4) .. controls (317.3,202.4) and (613.3,206.4) .. (626.3,208.4) .. controls (639.3,210.4) and (657.3,222.4) .. (657.3,239.4) .. controls (657.3,256.4) and (638.3,269.4) .. (628.3,271.4) .. controls (618.3,273.4) and (340.3,272.4) .. (273.3,273.4) ;
%Curve Lines [id:da45211472112177] 
\draw [color={rgb, 255:red, 0; green, 0; blue, 0 }  ,draw opacity=1 ][line width=1.5]    (222.3,183.4) .. controls (179.3,185.4) and (55.64,179.03) .. (44.3,177.4) .. controls (32.96,175.77) and (14.23,174.85) .. (14.3,161.4) .. controls (14.37,147.95) and (31.82,150.73) .. (42.3,148.4) .. controls (52.78,146.07) and (166.3,142.4) .. (222.3,142.4) ;
%Straight Lines [id:da12128914887234665] 
\draw  [dash pattern={on 4.5pt off 4.5pt}]  (239.3,278.4) -- (240.29,66.4) ;
\draw [shift={(240.3,63.4)}, rotate = 450.27] [fill={rgb, 255:red, 0; green, 0; blue, 0 }  ][line width=0.08]  [draw opacity=0] (8.93,-4.29) -- (0,0) -- (8.93,4.29) -- cycle    ;
%Curve Lines [id:da7698941208274932] 
\draw [color={rgb, 255:red, 0; green, 0; blue, 0 }  ,draw opacity=1 ][line width=1.5]    (255.3,95.4) .. controls (294.53,97.69) and (506.67,98.55) .. (517.98,100.34) .. controls (529.29,102.13) and (542.41,100.78) .. (542.15,114.23) .. controls (541.9,127.68) and (535.76,129.15) .. (525.25,131.33) .. controls (514.73,133.52) and (312.07,134.26) .. (255.3,136.4) ;
%Straight Lines [id:da6860489806817229] 
\draw [color={rgb, 255:red, 126; green, 211; blue, 33 }  ,draw opacity=1 ][line width=1.5]    (239.3,194.4) -- (272.3,200.4) ;
%Straight Lines [id:da8878309019005346] 
\draw [color={rgb, 255:red, 126; green, 211; blue, 33 }  ,draw opacity=1 ][line width=1.5]    (239.3,278.4) -- (273.3,273.4) ;
%Straight Lines [id:da043287171578849204] 
\draw [color={rgb, 255:red, 248; green, 231; blue, 28 }  ,draw opacity=1 ][line width=1.5]    (222.3,183.4) -- (239.3,194.4) ;
%Straight Lines [id:da28693832294866706] 
\draw [color={rgb, 255:red, 248; green, 231; blue, 28 }  ,draw opacity=1 ][line width=1.5]    (222.3,142.4) -- (239.3,139.4) ;
%Straight Lines [id:da24033555530137574] 
\draw [color={rgb, 255:red, 208; green, 2; blue, 27 }  ,draw opacity=1 ][line width=1.5]    (239.3,139.4) -- (255.3,136.4) ;
%Straight Lines [id:da19191281999196974] 
\draw [color={rgb, 255:red, 208; green, 2; blue, 27 }  ,draw opacity=1 ][line width=1.5]    (239.3,93.4) -- (255.3,95.4) ;

% Text Node
\draw (236,40.4) node [anchor=north west][inner sep=0.75pt]    {$y$};
% Text Node
\draw (195,267.4) node [anchor=north west][inner sep=0.75pt]    {$-\pi $};
% Text Node
\draw (185,81.4) node [anchor=north west][inner sep=0.75pt]    {$2\pi h_{1}$};

\end{tikzpicture}

Figure 2: graph of $\bar g_n(t, \cdot)$ when $n=1$, with the gluing (I), (II), (III) marked in green, yellow and red respectively. 
\end{center}

For any $t < -C_n-1$, the broken solution $\overline{\mathcal C}_n(t)$ is defined as 
\begin{align*}
\overline{\mathcal C}_n(t) = \{x = \bar g_n(t, y) :-\pi \le y \le 2\pi h_n\}^{re},
\end{align*}
where for any subset $X\subset \mathbb R^2$ we write $X^{re} = X\cup RX$ and $R$ is the reflection along the $y$-axis (see Figure 3). To be precise, $\overline {\mathcal C}_n(t)$ consists of two graphs 
\begin{align*}
\{ x = \overline g_n(t,y) : -\pi \le y\le 2\pi h_n\},\ \ \{ x = -\overline g_n(t,y) : -\pi \le y\le 2\pi h_n\}
\end{align*} 
and we glue them at the common end points $(0,-\pi)$, $(0, 2\pi h_n)$. The resulting curve $\overline{\mathcal C}_n(t)$ is an image of a continuous mapping $\mathbb S^1 \to \mathbb R^2$ which is immersed away from finitely many points. 

\begin{center}\tikzset{every picture/.style={line width=0.75pt}} %set default line width to 0.75pt        

\begin{tikzpicture}[x=0.75pt,y=0.75pt,yscale=-0.7,xscale=0.7]
%uncomment if require: \path (0,149); %set diagram left start at 0, and has height of 149

%Curve Lines [id:da86748815526783] 
\draw [color={rgb, 255:red, 0; green, 0; blue, 0 }  ,draw opacity=1 ][line width=0.75]    (356.24,85.95) .. controls (386.76,87.46) and (587.46,90.5) .. (596.28,92.01) .. controls (605.09,93.53) and (617.3,102.62) .. (617.3,115.51) .. controls (617.3,128.4) and (604.42,138.25) .. (597.64,139.77) .. controls (590.86,141.28) and (402.35,140.52) .. (356.92,141.28) ;
%Curve Lines [id:da16591553675396198] 
\draw [color={rgb, 255:red, 0; green, 0; blue, 0 }  ,draw opacity=1 ][line width=0.75]    (322.34,73.06) .. controls (293.18,74.58) and (209.33,69.75) .. (201.64,68.51) .. controls (193.96,67.28) and (181.25,66.58) .. (181.3,56.39) .. controls (181.35,46.19) and (193.18,48.3) .. (200.29,46.53) .. controls (207.4,44.77) and (284.37,41.98) .. (322.34,41.98) ;
%Curve Lines [id:da5410640923388872] 
\draw [color={rgb, 255:red, 0; green, 0; blue, 0 }  ,draw opacity=1 ][line width=0.75]    (344.72,6.36) .. controls (371.32,8.1) and (515.16,8.75) .. (522.83,10.1) .. controls (530.5,11.46) and (539.4,10.44) .. (539.22,20.63) .. controls (539.05,30.82) and (534.89,31.94) .. (527.76,33.59) .. controls (520.63,35.25) and (383.21,35.81) .. (344.72,37.44) ;
%Straight Lines [id:da47581550544326023] 
\draw [color={rgb, 255:red, 0; green, 0; blue, 0 }  ,draw opacity=1 ][line width=0.75]    (333.87,81.4) -- (356.24,85.95) ;
%Straight Lines [id:da6725169864481417] 
\draw [color={rgb, 255:red, 0; green, 0; blue, 0 }  ,draw opacity=1 ][line width=0.75]    (333.87,145.07) -- (356.92,141.28) ;
%Straight Lines [id:da7684321484167338] 
\draw [color={rgb, 255:red, 0; green, 0; blue, 0 }  ,draw opacity=1 ][line width=0.75]    (322.34,73.06) -- (333.87,81.4) ;
%Straight Lines [id:da8650836599622489] 
\draw [color={rgb, 255:red, 0; green, 0; blue, 0 }  ,draw opacity=1 ][line width=0.75]    (322.34,41.98) -- (333.87,39.71) ;
%Straight Lines [id:da8632510626920156] 
\draw [color={rgb, 255:red, 0; green, 0; blue, 0 }  ,draw opacity=1 ][line width=0.75]    (333.87,39.71) -- (344.72,37.44) ;
%Straight Lines [id:da8020735740950296] 
\draw [color={rgb, 255:red, 0; green, 0; blue, 0 }  ,draw opacity=1 ][line width=0.75]    (333.87,4.84) -- (344.72,6.36) ;
%Curve Lines [id:da27753759819729407] 
\draw [color={rgb, 255:red, 0; green, 0; blue, 0 }  ,draw opacity=1 ][line width=0.75]    (346.21,42.59) .. controls (375.35,40.9) and (459.23,45.23) .. (466.93,46.42) .. controls (474.62,47.61) and (487.33,48.23) .. (487.34,58.43) .. controls (487.35,68.62) and (475.51,66.59) .. (468.41,68.39) .. controls (461.31,70.2) and (384.36,73.44) .. (346.39,73.66) ;
%Straight Lines [id:da7879895127877476] 
\draw [color={rgb, 255:red, 0; green, 0; blue, 0 }  ,draw opacity=1 ][line width=0.75]    (333.87,39.71) -- (346.21,42.59) ;
%Straight Lines [id:da25515005732608054] 
\draw [color={rgb, 255:red, 0; green, 0; blue, 0 }  ,draw opacity=1 ][line width=0.75]    (346.39,73.66) -- (333.87,81.4) ;
%Curve Lines [id:da8999308886679203] 
\draw [color={rgb, 255:red, 0; green, 0; blue, 0 }  ,draw opacity=1 ][line width=0.75]    (323.05,38.2) .. controls (296.44,36.49) and (152.6,35.96) .. (144.93,34.61) .. controls (137.26,33.27) and (128.36,34.29) .. (128.53,24.1) .. controls (128.69,13.91) and (132.85,12.79) .. (139.98,11.13) .. controls (147.11,9.47) and (284.52,8.78) .. (323.02,7.13) ;
%Straight Lines [id:da823713053547954] 
\draw [color={rgb, 255:red, 0; green, 0; blue, 0 }  ,draw opacity=1 ][line width=0.75]    (333.87,4.84) -- (323.02,7.13) ;
%Straight Lines [id:da23839819883998303] 
\draw [color={rgb, 255:red, 0; green, 0; blue, 0 }  ,draw opacity=1 ][line width=0.75]    (333.87,39.71) -- (323.02,38.2) ;
%Curve Lines [id:da2235972727522748] 
\draw [color={rgb, 255:red, 0; green, 0; blue, 0 }  ,draw opacity=1 ][line width=0.75]    (311.32,140.46) .. controls (280.82,138.86) and (80.12,135.27) .. (71.31,133.73) .. controls (62.49,132.19) and (50.31,123.06) .. (50.35,110.17) .. controls (50.39,97.28) and (63.3,87.47) .. (70.08,85.97) .. controls (76.87,84.47) and (265.37,85.76) .. (310.8,85.13) ;
%Straight Lines [id:da050660987324726126] 
\draw [color={rgb, 255:red, 0; green, 0; blue, 0 }  ,draw opacity=1 ][line width=0.75]    (333.87,145.07) -- (311.5,140.46) ;
%Straight Lines [id:da10009648481038846] 
\draw [color={rgb, 255:red, 0; green, 0; blue, 0 }  ,draw opacity=1 ][line width=0.75]    (333.87,81.4) -- (310.8,85.13) ;

\end{tikzpicture}

Figure 3: the broken curve $\overline{\mathcal C}_n (t)$ when $n=1$. 
\end{center}

For each $\alpha >C_n+1$, let $\mathcal C^\alpha_n (t)$ be the CSF with $\mathcal C_n^\alpha(-\alpha) = \overline{\mathcal C}_n (-\alpha)$. 

\begin{rem} \label{CSF starting at Lipschitz curve} The CSF starting at a piecewise smooth curve $\gamma : \mathbb S^1 \to \mathbb R^2$ is constructed in \cite{A91} as follows: let $\{\gamma_k: \mathbb S^1 \to \mathbb R^2\}$ be a sequence of smooth immersions with bounded derivatives which converges in $C^0$ to $\gamma$. For each $k$, we consider the CSF $\gamma_k(t)$ starting at $\gamma_k$. For small $t>0$, the interior estimates \cite[Theorem 3.1]{A91} enable us to take the limit of $\gamma_k(t)$ as $k\to \infty$ and construct a CSF $\{ \gamma(t)\}$ starting at $\gamma$. The non-compact situation is done similarly in \cite{EH}. 
\end{rem}

Note that $C^\alpha_n(t)$ are invariant under the reflection $R$. We write 
\begin{align}
\mathcal C_n^\alpha (t) = \left\{ x = g^\alpha_n (t,y) : y^{\alpha,-}_{0,n}(t)\le y\le y^{\alpha, +}_{n,n}(t)\right\} ^{re}.
\end{align}

Here $g^\alpha_n$ satisfies the graphical CSF equation (\ref{CSF equation by graph}) and $g^\alpha_n(-\alpha, \cdot) = \bar g_n(-\alpha, \cdot)$. Note that $\mathcal C^\alpha_n(t)$ is defined as least when $-\alpha < t<-C_n-1$ and intersects the $y$-axis at $2n+2$ points 
\begin{equation} \label{zeros of C^alpha_n}
 -\pi < y^{\alpha, -}_{0,n}(t) < y^{\alpha,+}_{0,n}(t) < \cdots < y^{\alpha, -}_{n,n}(t) < y^{\alpha, +}_{n,n}(t)< 2\pi h_n.
\end{equation}
$2n$ of them (besides $(0,y^{\alpha, -}_{0,n}(t))$, $(0, y^{\alpha,+}_{n,n}(t)))$ are also self-intersection points of $\mathcal C^\alpha_n(t)$.

The following theorem is proved in section 4 of \cite{AQ}. 

\begin{thm} \label{C_n(t) ancient}
For each $n\in \mathbb N$, the CSFs $\mathcal C^\alpha_n(t)$ converges, as $\alpha\to +\infty$, to an ancient solution $\mathcal C_n(t)$ to the CSF. 
\end{thm}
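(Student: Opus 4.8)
The plan is to realize $\mathcal{C}_n(t)$ as the limit of the CSFs $\mathcal{C}^\alpha_n(t)$ as $\alpha\to+\infty$. The two inputs are a monotonicity of the family $\{\mathcal{C}^\alpha_n\}_{\alpha}$ in the parameter $\alpha$ — which forces the full (not merely subsequential) limit to exist and to be independent of the smoothing used in Remark~\ref{CSF starting at Lipschitz curve} — and the interior estimates recalled there, which promote the convergence to $C^\infty_{\mathrm{loc}}$. The limit $\mathcal{C}_n(t)$ will be defined for all $t<-C_n-1$, hence will be ancient.

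\textbf{Monotonicity.} The key assertion, and the content of section~4 of \cite{AQ}, is that the time-dependent broken curve $s\mapsto\overline{\mathcal{C}}_n(s)$ is a supersolution in the following sense: for $s_1\le s_2<-C_n-1$, the CSF issuing from $\overline{\mathcal{C}}_n(s_1)$ at time $s_1$ is, at time $s_2$, enclosed by $\overline{\mathcal{C}}_n(s_2)$; equivalently, on the graphical halves, the solution of (\ref{CSF equation by graph}) with initial data $\bar g_n(s_1,\cdot)$ stays $\le\bar g_n(s_2,\cdot)$ on the common domain. This is because on each Grim Reaper arc $\bar g_n$ solves (\ref{CSF equation by graph}) exactly, on each straight gluing arc of Remark~\ref{gluing} one has $\bar g_n''\equiv0$ while $\partial_s\bar g_n$ carries the sign forced by the motion of its endpoints along the Grim Reapers, and at each of the finitely many non-smooth points the graph $\bar g_n$ has a concave corner (and the curve self-crosses in the favorable way), which only reinforces the inequality. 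Granting this, let $C_n+1<\alpha<\alpha'$. Applying the supersolution property to the honest flow $\mathcal{C}^{\alpha'}_n$ between times $-\alpha'$ and $-\alpha$ (recall $\mathcal{C}^{\alpha'}_n(-\alpha')=\overline{\mathcal{C}}_n(-\alpha')$) gives $g^{\alpha'}_n(-\alpha,\cdot)\le\bar g_n(-\alpha,\cdot)=g^\alpha_n(-\alpha,\cdot)$, and comparing the two honest flows $\mathcal{C}^{\alpha'}_n$ and $\mathcal{C}^\alpha_n$ for $s\ge-\alpha$ — by the comparison principle for (\ref{CSF equation by graph}) on the graphical halves, equivalently the avoidance principle for planar curves — propagates this forward: the $y$-domains are nested and $g^{\alpha'}_n(s,\cdot)\le g^\alpha_n(s,\cdot)$ wherever both are defined, for all $s\in[-\alpha,-C_n-1)$. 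Thus $\alpha\mapsto\mathcal{C}^\alpha_n(s)$ is nested and shrinking for each fixed $s$.

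\textbf{Passage to the limit.} Fix $s<-C_n-1$ and any $\alpha_0>\max\{C_n+1,\,-s\}$, so that $\mathcal{C}^{\alpha_0}_n(s)$ is a well-defined bounded curve. For $\alpha\ge\alpha_0$ the nesting gives $\mathcal{C}^\alpha_n(s)\subseteq\mathcal{C}^{\alpha_0}_n(s)$, so the nonincreasing functions $g^\alpha_n(s,\cdot)$ are uniformly bounded and their $y$-domains shrink; hence $g^\alpha_n(s,\cdot)$ converges pointwise to some $g_n(s,\cdot)$ on a limiting interval, and $\mathcal{C}^\alpha_n(s)$ converges in $C^0$ to a closed curve $\mathcal{C}_n(s)$ (assembled from the graph of $g_n(s,\cdot)$ and its reflection). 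The nested nonempty compact sets $\mathcal{C}^\alpha_n(s)$ have nonempty intersection, so $\mathcal{C}_n(s)\neq\emptyset$. By the uniform interior estimates of \cite[Theorem 3.1]{A91} recalled in Remark~\ref{CSF starting at Lipschitz curve}, applied on compact subsets of spacetime with $t<-C_n-1$, the convergence is in fact in $C^\infty_{\mathrm{loc}}$, so $\mathcal{C}_n(\cdot)$ solves the CSF; moreover the uniform curvature bound gives a uniform positive lower bound for the lengths of the $\mathcal{C}^\alpha_n(s)$, so the limit does not collapse. Therefore $\mathcal{C}_n(t)$ is a compact immersed ancient solution, and monotonicity makes it independent of the approximating smooth curves of Remark~\ref{CSF starting at Lipschitz curve}.

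\textbf{Main obstacle.} The whole difficulty is the supersolution claim of the monotonicity step, and within it the behavior at the non-smooth gluing points: a corner is smoothed instantaneously by the CSF, and whether it acts as a supersolution or a subsolution corner depends on which way the curve turns there, so one must verify the correct sign at each of the gluing points of every curve in the nested family — this, together with the sign of $\partial_s\bar g_n$ on the linear gluing arcs, is a finite but delicate computation with the profile $G(y)=\ln\sin y$, carried out in section~4 of \cite{AQ}. A secondary technical point is that the comparison and avoidance principles are being applied to immersed, self-intersecting curves; one makes this rigorous by reducing to the scalar equation (\ref{CSF equation by graph}) on the two graphical halves and treating the gluing points in the Lipschitz/viscosity formulation, as in \cite{A91} and \cite{AQ}.
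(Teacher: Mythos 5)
The paper does not reprove this statement --- it is quoted from Section~4 of \cite{AQ}, where the convergence of $\mathcal C^\alpha_n(t)$ as $\alpha\to\infty$ is obtained by trapping each tip between translated Grim Reaper barriers, proving exponential decay of the tip angles and the associated area estimates (the source of the paper's Lemmas \ref{area bound between g_n and o_n}--\ref{area between g_n and g_m}), deducing uniform curvature bounds from the interior estimates, and passing to the limit by compactness. Your proposal rests instead on a monotonicity in $\alpha$, driven by the claim that the broken family $s\mapsto\overline{\mathcal C}_n(s)$ is a supersolution (an outer barrier), and you attribute the verification of this claim to \cite{AQ}; that attribution is incorrect, and, more importantly, the claim itself is false.

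The failure is not at the corners but in the interior of the straight gluing segments. Take the segment of Remark~\ref{gluing}(I) joining $(0,-\pi)$ to $(1,y_1(t))$, where $y_1(t)=-\pi+\arcsin e^{1+t}$. On it, $\bar g_n(t,y)=(y+\pi)/\arcsin e^{1+t}$, so $\bar g_n''=0$ while $\partial_t\bar g_n<0$ for $y>-\pi$: the graph moves strictly in the \emph{sub}solution direction there (the same happens on the upper segment of the $\mathcal G_0$-loop and on the segments capping the $\mathcal G^+_m$-loops, while on the $\mathcal G^-_m$-segments one has $\partial_t\bar g_n>0$, so the broken curve is a supersolution there and a subsolution elsewhere --- it is neither globally). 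Concretely, for small $\delta>0$ the flow started at $\overline{\mathcal C}_n(-\alpha')$ is essentially flat along the middle of such a segment (zero curvature, with the corner influence only exponentially small in $1/\delta$), whereas the segment of $\overline{\mathcal C}_n(-\alpha'+\delta)$ has pivoted inward by an amount linear in $\delta$; hence $g^{\alpha'}_n(-\alpha'+\delta,\cdot)>\bar g_n(-\alpha'+\delta,\cdot)$ at such points, while near the tips the opposite strict inequality holds (the tip of the evolving curve lags behind the exact translating Grim Reaper, with which $\overline{\mathcal C}_n$ coincides for $x\ge1$). So $\mathcal C^{\alpha'}_n(s)$ and $\mathcal C^{\alpha}_n(s)$ cross, and the family is not nested in $\alpha$ in either direction. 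Since your entire mechanism --- monotone limits giving full convergence, nonemptiness via nested compact sets, independence of the smoothing --- is powered by this ordering, the proof collapses at its first step; the interior-estimate/$C^\infty_{\mathrm{loc}}$ part of your argument is fine but only yields subsequential limits without the uniform barrier, angle and area estimates that \cite{AQ} (and, in adapted form, Section~\ref{C_n(t)} of this paper) actually use.
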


Just like $\mathcal C^\alpha_n(t)$, the immersion $\mathcal C_n(t)$ is defined when $t<-C_n-1$ and intersects the $y$-axis at $2n+2$ points 
$$ -\pi < y^-_{0,n}(t) < y^+_{0,n}(t) < \cdots < y^-_{n,n}(t) < y^+_{n,n}(t)< 2\pi h_n.$$
$2n$ of them (besides $(0,y^-_{0,n}(t))$, $(0, y^+_{n,n}(t)))$ are also self-intersection points of $\mathcal C_n(t)$. 

For each $n \in \mathbb N$ we write 
$$ \mathcal C_n(t) = \{ x = g_n(t,y) :  y^-_{0,n}(t) \le y\le y^+_{n,n}(t)\}^{re},$$
where
\begin{equation} \label{definition of g_n}
g_n = \lim_{\alpha \to \infty} g^\alpha_n.
\end{equation}

\begin{lem} \label{g_m ge g_n}
If $m>  n$, then $g_m(t,y) > g_n(t,y)$ for all $(t,y)$ so that both $g_n, g_m$ are defined.   
\end{lem}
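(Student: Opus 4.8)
The plan is to establish the non‑strict inequality $g_m\ge g_n$ for the finite‑time approximations $\mathcal C^\alpha_n(t)$, $\mathcal C^\alpha_m(t)$, pass to the limit $\alpha\to\infty$ using (\ref{definition of g_n}), and then upgrade to the strict inequality by the strong maximum principle. The starting point is that the broken curves literally overlap on the common range: for $k\le n$ the Grim Reaper pieces $\mathcal G^{\pm}_k$ and the gluing recipes (I)--(III) of Remark \ref{gluing} entering $\overline{\mathcal C}_n$ and $\overline{\mathcal C}_m$ are exactly the same, since the time shifts $C_k$ and $C_k-La_k^{-2}$ depend only on $k$. Hence $\overline g_n(t,\cdot)\equiv\overline g_m(t,\cdot)$ on $[-\pi,2\pi h_n]$ for every $t$; in particular $g^\alpha_n(-\alpha,\cdot)=g^\alpha_m(-\alpha,\cdot)$ on $[-\pi,2\pi h_n]$, and $g^\alpha_m(-\alpha,\cdot)$ is an extension of $g^\alpha_n(-\alpha,\cdot)$ with $g^\alpha_m(-\alpha,2\pi h_n)=0$.

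On the region where both $g^\alpha_n$ and $g^\alpha_m$ are defined, the difference $w=g^\alpha_m-g^\alpha_n$ solves a linear parabolic equation $\partial_t w=aw''+bw'$ with $a>0$ and no zeroth order term (differentiate (\ref{CSF equation by graph}) along the segment joining the two solutions). Since $w$ vanishes on the initial slice $\{t=-\alpha\}$, the maximum principle reduces everything to controlling $w$ on the lateral, moving boundary. I would run together, in a continuity (first‑failure‑time) argument, the two assertions: (a) the $y$-range of $\mathcal C^\alpha_m(t)$ contains that of $\mathcal C^\alpha_n(t)$, so that the common domain is exactly the domain of $g^\alpha_n(t,\cdot)$; and (b) at the two extreme heights $y^{\alpha,-}_{0,n}(t)$, $y^{\alpha,+}_{n,n}(t)$ of $\mathcal C^\alpha_n(t)$, where $g^\alpha_n=0$, one has $g^\alpha_m\ge 0$, i.e. $\mathcal C^\alpha_m(t)$ lies weakly on the nonnegative side of the $y$-axis there. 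Granting (a) and (b), the maximum principle gives $w\ge0$, hence $g^\alpha_m\ge g^\alpha_n$; letting $\alpha\to\infty$ and using (\ref{definition of g_n}) yields $g_m\ge g_n$ wherever both are defined.

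For strictness, suppose $g_m(t_0,y_0)=g_n(t_0,y_0)$ at an interior point. The strong maximum principle applied to $w=g_m-g_n\ge0$ forces $w\equiv 0$ on the connected component $\mathcal U$ of $\{t\le t_0\}$ in the common domain through $(t_0,y_0)$. But $\overline{\mathcal U}$ meets the lateral boundary $\{y=y^+_{n,n}(t)\}$ at earlier times, and there $g_n$ has a vertical tangent, $(g_n)'(t,y)\to\pm\infty$ as $y\to y^+_{n,n}(t)$, because $\mathcal C_n(t)$ is a smooth immersed curve turning around at its topmost point; on the other hand $g_m$ is smooth with finite slope at that point, which is interior to its own domain. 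This contradiction shows $g_m>g_n$ on the whole common domain.

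The main obstacle is step (a)--(b): tracking how the endpoints of the graph domains and the self‑intersection heights of $\mathcal C^\alpha_n$, $\mathcal C^\alpha_m$ move under the flow, so as to keep the $y$-range of $\mathcal C^\alpha_m(t)$ above and below that of $\mathcal C^\alpha_n(t)$ and to keep $g^\alpha_m\ge0$ at the extreme heights of $\mathcal C^\alpha_n(t)$. This is where the fine geometry near the topmost and bottommost points enters: a convex cap has a square‑root profile while a transversal crossing of the graph with the $y$-axis has a linear profile, so near a cap of $\mathcal C^\alpha_n(t)$ the graph of $g^\alpha_m$ dominates, and one must use that, by construction, $\mathcal C^\alpha_m(t)$ carries the extra beads that keep the relevant crossing height above the cap of $\mathcal C^\alpha_n(t)$.
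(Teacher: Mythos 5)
Your overall scaffolding (compare the finite-time approximations $g^\alpha_n$, $g^\alpha_m$, let $\alpha\to\infty$, then upgrade to strict inequality by the strong maximum principle) matches the paper, but the heart of the argument is exactly the step you flag as ``the main obstacle'' and never carry out: the control of $w=g^\alpha_m-g^\alpha_n$ at the moving lateral boundary, i.e.\ your assertions (a) and (b). As stated, (b) is essentially equivalent to the ordering you are trying to prove: at the shared initial time the two broken curves coincide, $\overline g_m(-\alpha,2\pi h_n)=\overline g_n(-\alpha,2\pi h_n)=0$, and just above $2\pi h_n$ the curve $\overline{\mathcal C}_m$ dips to negative $x$ (the piece $\mathcal G^-_{n+1}$), so the ordering at the top endpoint is marginal at $t=-\alpha$ and must be propagated, not assumed. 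Moreover, at the endpoints $y^{\alpha,\pm}$ the graph $g^\alpha_n$ has a vertical tangent, so the linear equation $\partial_t w = a w''+bw'$ has coefficients degenerating there, and a boundary-value weak maximum principle on the common domain does not apply off the shelf; your appeal to ``square-root versus linear profiles'' and ``extra beads'' is a heuristic, not a proof. A further point you do not address is that the initial curves are only piecewise smooth and the flows $\mathcal C^\alpha_n(t)$ are themselves defined by approximation, so the comparison has to be set up at the level of approximating smooth curves.

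The paper resolves precisely this difficulty by a different technical device: it approximates $\overline g_n(-\alpha,\cdot)$ strictly from below by smooth immersed curves $f_k$ supported on compact subintervals $[a_k,b_k]\subset(-\pi,2\pi h_n)$ (so $f_k$ has vertical tangents at its own endpoints), runs the flow of each $f_k$, and shows $g^\alpha_m(t,\cdot)>f_k(t,\cdot)$ by a first-touching-time argument: interior touching is excluded by Angenent's finiteness-of-intersections theorem plus the strong maximum principle, touching at the bottom endpoint is excluded by rewriting both curves locally as graphs over the $x$-axis, and touching at the top endpoint splits into the case where $g^\alpha_m$ has a vertical tangent (excluded again as a graph over $x$) and the case where the touching occurs at a transversal crossing $y^{\alpha,+}_{k,m}(t_0)$ with $(g^\alpha_m)'\le 0$, which contradicts $f_k'\to-\infty$ at its endpoint since $f_k$ would then have to lie above $g^\alpha_m$ somewhere just before the endpoint. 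Letting $k\to\infty$ and then $\alpha\to\infty$ gives $g_m\ge g_n$, and strictness follows as in your last step. So your proposal is not wrong in spirit, but without an argument replacing the $f_k$-approximation and the case analysis at the touching point, the claims (a)--(b) on which your maximum-principle step rests remain unproved, and this is a genuine gap rather than a routine verification.
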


\begin{proof}
For any $t < -C_m-1$, choose $\alpha  > -t$. Then 
\begin{align*}
C^\alpha_n(-\alpha) = \overline{\mathcal C}_n (-\alpha), \ \ C^\alpha_m(-\alpha) = \overline{\mathcal C}_m (-\alpha).
\end{align*}
Also, by Remark \ref{gluing}, $\overline g_m(-\alpha, y)=\overline g_n(-\alpha, y)$ whenever $\overline g_n(-\alpha, y)$ is defined. As in Remark \ref{CSF starting at Lipschitz curve}, we choose a sequence of function $f_{k} : [a_k, b_k] \to \mathbb R$ with the following properties:  
\begin{itemize}
\item $-\pi < a_k <b_k< 2\pi h_n$,  
\item $a_k \to -\pi$ and $b_k \to 2\pi h_n$ as $k\to \infty$,
\item $f_k(a_k) = f_k(b_k) = 0$,  
\item $f_k (y)< \overline g_n (-\alpha, y)$ for all $y\in [a_k, b_k]$,
\item for each $k$, 
\begin{equation}\label{approximated immersion of f_k}
 \{ x = f_k(y) :  y\in [a_k, b_k]\}^{re}
 \end{equation}
is a smooth immersion, and
\item $f_k(\cdot)\to \bar g_n(-\alpha,\cdot)$ in $C^0$ as $k\to \infty$.
\end{itemize}
For each $k$, let 
\begin{equation}
\{x= f_k(t, y) : a_k(t)\le y\le b_k(t)\}^{re}
\end{equation}
be the CSF starting at (\ref{approximated immersion of f_k}) with $f_k(-\alpha, \cdot) = f_k(\cdot)$. Then $f_k(t, y)$ satisfies the graphical CSF equation (\ref{CSF equation by graph}). Note that $\bar g_m (-\alpha, \cdot)>f_k(\cdot)$. For each $k$ and for all $t>-\alpha$, we claim that 
\begin{equation} \label{f_k (t, )< g^alpha (t, )}
g^\alpha_m (t, y) >f_k(t, y) 
\end{equation}
for all $y \in [a_k(t), b_k(t)]$. The inequality is obvious when $t$ is close to $-\alpha$. In general we argue by contradiction: assume that (\ref{f_k (t, )< g^alpha (t, )}) fails at the first instance $t_0$. Since (\ref{f_k (t, )< g^alpha (t, )}) holds for all $t<t_0$, the graph of $f_k(t_0, \cdot)$ touches the graph of $g^\alpha_m (t_0, \cdot)$ from below. By \cite[Theorem 1.1]{A91}, these two graphs can touch at only finitely many points. Since $f_k$, $g^\alpha_m$ both satisfy (\ref{CSF equation by graph}), the strong maximum principle implies that the graph of $f_k(t_0, \cdot)$ and $g^\alpha_m (t_0, \cdot)$ cannot touch in the interior $(a_k(t_0), b_k(t_0))$. Similarly, represent $f_k$, (resp. $g^\alpha_m$) locally around $(0,a_k(t_0))$ (resp. $(0,y^{\alpha,-}_{0,m}(t))$) as a graph of $x$, the strong maximum principle implies that the two graphs cannot touch at $(0,a_k(t_0))$, $(0,y^{\alpha, -}_{0,m}(t_0))$. Lastly, if the graphs touch at $(0,b(t_0))$, then either 
$$\lim_{y\to b_k(t_0)^-} (g_m^\alpha)'(t_0, y) = -\infty$$
(when $b_k(t_0) = y^{\alpha, +}_{m,m}(t_0)$) or 
$$(g^\alpha_m)' (t_0, b_k(t_0)) \le 0,$$
(when $b_k(t_0) = y^{\alpha, +}_{k,m}(t_0)$ for some $k=n, \cdots, m-1$). Representing $f_k$, $g^\alpha_m$ locally around $(0,b_k(t_0))$ as graphs of $x$, one sees that the first case is impossible by the strong maximum principle. For the second case, using also
$$\lim_{y\to b_k(t_0)^-} f'_k(t_0, y)=-\infty,$$
there is $y< b_k(t_0)$ so that $g(t_0, y) < f_k(t_0, y)$, which is again impossible. Thus the graph of $g^\alpha_m(t_0, \cdot)$ and $f_k(t_0, \cdot)$ do not touch and this contradicts to the choice of $t_0$. This finishes the proof of (\ref{f_k (t, )< g^alpha (t, )}). 

Taking $k\to \infty$, we have 
$$g^\alpha_m (t,y)\ge g^\alpha_n(t,y)$$ 
for all $\alpha > -t$. By (\ref{definition of g_n}), we have $g_m(t, y) \ge g_n(t, y)$. An argument using \cite[Theorem 1.1]{A91} and strong maximum principle again imply $g_m(t, y) > g_n(t, y)$ whenever $g_n(t, y), g_m(t, y)$ are defined. 
\end{proof}

\begin{lem} \label{g_m < G^-}
Let $n\in \mathbb N$ and $m \ge n$. If $t < -C_n-1$ and $g_m(t, y)$ is defined at $t$, then 
$$g_m(t, y)< -G^-_n (y) + a_n (t+C_n)$$
for all $2\pi h_{n-1} \le  y\le 2\pi h_{n-1} + \pi a_n^{-1}$.
\end{lem}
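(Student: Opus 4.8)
The plan is to compare the graph $g_m(t,\cdot)$ on the slab $\{2\pi h_{n-1}\le y\le 2\pi h_{n-1}+\pi a_n^{-1}\}$ with the graph of the translating soliton $\mathcal G^-_n(t+C_n)$, i.e.\ with $y\mapsto -G^-_n(y)+a_n(t+C_n)$, via the parabolic maximum principle. Since $g_m=\lim_{\alpha\to\infty}g^\alpha_m$ by (\ref{definition of g_n}), it suffices to argue for the approximations $g^\alpha_m$ and then pass to the limit.

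First I would record the initial data. For $m\ge n$ the open slab meets $\overline{\mathcal C}_m(-\alpha)=\mathcal C^\alpha_m(-\alpha)$ only in the $\mathcal G^-_n$-piece of Remark \ref{gluing}(II): the arc $\{x=-G^-_n(y)+a_n(-\alpha+C_n)\}$ over $[z^-_1,z^-_2]$, together with the straight segments joining $(0,2\pi h_{n-1})$ to $(-a_n^{-1},z^-_1)$ and $(-a_n^{-1},z^-_2)$ to $(0,2\pi h_{n-1}+\pi a_n^{-1})$. On the arc $\bar g_m(-\alpha,\cdot)$ is exactly the soliton graph; on each segment, $\bar g_m(-\alpha,y)-\big(-G^-_n(y)+a_n(-\alpha+C_n)\big)$ is concave, vanishes at the corresponding $z^\pm_i$, and tends to $-\infty$ at the slab edge, hence is bounded above by $\sigma(\alpha):=a_n^{-1}(1-s\cot s)$, where $s=a_n(z^-_1-2\pi h_{n-1})$. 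Since $z^-_1$ solves $\sin s=e^{1+a_n^2(C_n-\alpha)}$ we have $s\to0$, hence $\sigma(\alpha)\to0$, as $\alpha\to\infty$. Therefore $\bar g_m(-\alpha,y)\le -G^-_n(y)+a_n(-\alpha+C_n)+\sigma(\alpha)$ on the slab, and the right-hand side is the graph of the Curve Shortening Flow $\mathcal G^-_n\big(t+C_n+\sigma(\alpha)a_n^{-1}\big)$.

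Next comes the comparison. Both $g^\alpha_m(t,\cdot)$ and this shifted soliton graph solve (\ref{CSF equation by graph}) on the open slab, and $g^\alpha_m(t,\cdot)$ is single-valued there, being one of the two graphs composing $\mathcal C^\alpha_m(t)$. Their difference solves a linear parabolic equation with coefficients bounded on compact subsets of the parabolic interior (by the interior estimates \cite[Theorem 3.1]{A91}); near the two slab edges the soliton graph goes to $+\infty$ uniformly for $t$ in a compact interval, while $g^\alpha_m$ stays bounded, so the difference is positive there. Together with the initial inequality, the maximum principle—applied on a slightly shrunken slab, the minor regularity issue at the non-smooth initial time being handled exactly as in Remark \ref{CSF starting at Lipschitz curve} and the proof of Lemma \ref{g_m ge g_n}—yields
\begin{equation*}
g^\alpha_m(t,y)\ \le\ -G^-_n(y)+a_n(t+C_n)+\sigma(\alpha),\qquad -\alpha< t<-C_m-1,
\end{equation*}
for $y$ in the slab. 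Letting $\alpha\to\infty$ gives $g_m(t,y)\le -G^-_n(y)+a_n(t+C_n)$. Finally, $w:=-G^-_n(\cdot)+a_n(t+C_n)-g_m\ge0$ solves a linear parabolic equation and is not identically zero (it blows up at the slab edges), so the strong maximum principle forces $w>0$ throughout the open slab; at the endpoints the right-hand side is $+\infty$, so the inequality is strict on all of $[2\pi h_{n-1},2\pi h_{n-1}+\pi a_n^{-1}]$.

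The one real wrinkle is that the gluing segments of Remark \ref{gluing}(II) make $\overline{\mathcal C}_m(-\alpha)$ slightly overshoot $\mathcal G^-_n(-\alpha+C_n)$ near $z^\pm_i$, so $-G^-_n(y)+a_n(t+C_n)$ itself cannot serve as the initial barrier; comparing instead with the $\sigma(\alpha)$-shifted soliton and verifying $\sigma(\alpha)\to0$ is the crux. Everything else—single-valuedness and boundedness of $g^\alpha_m$ over the slab, the blow-up at the slab edges, and the two limits—is routine given the results already established.
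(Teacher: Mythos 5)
Your argument is correct and follows the same route as the paper: compare $g^\alpha_m$ with the translating Grim Reaper on the slab via the parabolic comparison principle (the soliton graph blows up at the slab edges while $g^\alpha_m$ stays bounded), let $\alpha\to\infty$, and upgrade to a strict inequality by the strong maximum principle. The one genuine difference is at the initial time: the paper asserts ``by definition'' that $\bar g_m(-\alpha,y)\le -G^-_n(y)+a_n(-\alpha+C_n)$ on the slab, whereas you observe -- correctly -- that the straight gluing segments of Remark \ref{gluing}(II) arrive at the corner points $z^\pm_i$ with slope $-1/s$ while the soliton graph has slope $-\cot s>-1/s$ there, so the broken curve does poke slightly above the soliton near the corners; your fix, comparing instead with the time-advanced soliton $\mathcal G^-_n\bigl(t+C_n+\sigma(\alpha)a_n^{-1}\bigr)$ and checking via concavity of the difference that $\sigma(\alpha)=a_n^{-1}(1-s\cot s)\to0$ as $\alpha\to\infty$, is valid and recovers the stated inequality in the limit. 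So this is a refinement of (indeed a small correction to) the paper's argument rather than a different method. One minor point: the comparison should be stated for all $t<-C_n-1$ at which $g^\alpha_m(t,\cdot)$ remains graphical, not only $t<-C_m-1$, since the lemma covers every $t<-C_n-1$ at which $g_m$ is defined; the argument itself is unchanged.
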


\begin{proof}
Let $-\alpha < t$. By definition, 
$$\bar g_m(-\alpha, y) \le -G^-_n (y) + a_n (t+C_n)$$ 
for all $y\in (2\pi h_{n-1}, 2\pi h_{n-1}+ a_n^{-1} \pi)$. Since $-G^-_n (y) + a_n (t+C_n)$ converges to $+\infty$ as $y\to 2\pi h_{n-1}$, $2\pi h_{n-1} + a_n^{-1} \pi$, $g^\alpha_m(t, y)$ is bounded and both $g^\alpha_m(t, y)$, $-G^-_n (y) + a_n (t+C_n)$ satisfy (\ref{CSF equation by graph}), the comparison principle \cite[Theorem 4.3]{MP} implies that 
$$ g^\alpha_m(t, y) \le -G^-_n(y) + a_n (t+C_n)$$
for all $-\alpha<t<-C_n-1$. Taking $\alpha \to \infty$, we have $g_m(t, y) \le G^+_n(y) - a_n (t+C_n)$ and the strictly inequality follows from strong maximum principle. 
\end{proof}

Next we define a sequence of ancient solutions $\mathcal O_n(t)$. For each $n\ge 0$, let
\begin{align} 
\overline{\mathcal O}_n(t) =\{x = \bar g_n(t, y) : 2\pi h_{n-1} + a_n^{-1} \pi \le y\le 2\pi h_n\}^{re}. 
\end{align}

As in the construction of $\mathcal C_n(t)$, we let $\alpha >0$ and let $\mathcal O^{\alpha}_n(t)$ be the CSF with $\mathcal O^{\alpha}_n (-\alpha) = \overline{\mathcal O}_n(-\alpha)$. Write 
\begin{equation}
\mathcal O^\alpha_n (t) = \{ x = o^\alpha_n (t, y): o^{\alpha,-}_n(t) \le y\le o^{\alpha,+}_n(t)\}^{re},  
\end{equation}
where $o^\alpha _n (t, \cdot)\ge 0$. Take $\alpha \to \infty$, we obtain the ancient solution $\mathcal O_n (t)$. Note that $\mathcal O_0(t)= \mathcal C_0(t)$ by construction. Write 
\begin{equation}
\mathcal O_n (t) = \{ x = o_n (t, y): o^-_n(t) \le y\le o^+_n(y)\}^{re},  
\end{equation}
where $o_n (t, y) \ge 0$. For $n\ge 1$, the strong maximum principle implies that 
\begin{equation}
o_n (t, y) < g_n(t, y)
\end{equation} 
whenever $t > -C_n-1$. 

Next we estimate the ``area'' bounded between 
\begin{itemize}
\item $o_n$ and $g_n$,
\item $g_m$ and $-G^-_n(\cdot) + a_n(t+C_n)$, and 
\item $g_n$ and $g_m$ 
\end{itemize}
respectively. To be precise, let $f_i : I_i \to \mathbb R$, $i=1, 2$ be two functions defined on two intervals with $I_1\subset I_2$, $f_1\le f_2$ and $f_i (\partial I_i) = 0$ for $i=1,2$. The {\sl signed area} bounded between $f_1$ and $f_2$ is defined as 
$$ \int_{I_2} f_2-f_1 ,$$
where we extend the domain of $f_1$ to $I_2$ by setting $f_1(x) = 0$ for all $x\in I_2\setminus I_1$. If $f_2\ge 0$ in $I_2\setminus I_1$, then the signed area is just the area under the graph $|f_2  -f_1|$, and in this case we simply use the word {\sl area}. 

\begin{lem} \label{area bound between g_n and o_n}
For each $n\ge 1$, there is $A_n >0$ depending on $\{a_n\}$ so that for all $t < -C_n-1$, the area bounded between $o_n(t, y)$ and $g_n(t, y)$ in $[ y^-_{n,n}(t), y^+_{n,n}(t)]$ is smaller than $A_n$.
\end{lem}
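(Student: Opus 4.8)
The plan is to view the region in question as lying between two graphical Curve Shortening Flows with vanishing boundary values, apply the area identity (\ref{derivative of area under graph}) to each, and observe that the resulting area is \emph{monotone} in $t$; the bound is then its (finite) value at the right end $t=-C_n-1$ of the time interval.

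First, set up the picture. For $t<-C_n-1$ the portion of $\mathcal C_n(t)$ over $[y^-_{n,n}(t),y^+_{n,n}(t)]$ is the topmost ``bump'' of the graph $g_n(t,\cdot)$, on which $g_n(t,\cdot)\ge 0$; it runs from the unique highest point $(0,y^+_{n,n}(t))$ of $\mathcal C_n(t)$ to the self-intersection point $(0,y^-_{n,n}(t))$, at which the graph crosses the $y$-axis transversally from the negative bump below to this positive bump. Since $0\le o_n\le g_n$ wherever both are defined and $o_n>0$ exactly on $(o^-_n(t),o^+_n(t))$, one gets $[o^-_n(t),o^+_n(t)]\subset[y^-_{n,n}(t),y^+_{n,n}(t)]$, so the area to be estimated is
\[
\mathcal A_n(t):=\int_{y^-_{n,n}(t)}^{y^+_{n,n}(t)} g_n(t,y)\,dy-\int_{o^-_n(t)}^{o^+_n(t)} o_n(t,y)\,dy .
\]
Each integrand is a piece of a Curve Shortening Flow which is a graph in $y$ with $0$ boundary values, so (\ref{derivative of area under graph}) applies to each, after translating via $R$ if needed.

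Now differentiate. At the highest point $(0,y^+_{n,n}(t))$ the curve $\mathcal C_n(t)$ --- being smooth, $R$-symmetric, and attaining its maximal height there --- has horizontal tangent, so $g_n'(t,y)\to-\infty$ as $y\to y^+_{n,n}(t)^-$ and the corresponding angle in (\ref{derivative of area under graph}) is $-\pi/2$; at the transversal crossing $(0,y^-_{n,n}(t))$ the slope $g_n'(t,y^-_{n,n}(t))$ is finite and $\ge 0$ (the graph passes from $\le 0$ to $\ge 0$). Likewise $\mathcal O_n(t)$ is a smooth closed curve with $o_n\ge0$ vanishing only at its extreme heights $o^\pm_n(t)$, where the angles are $-\pi/2$ and $+\pi/2$. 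Hence by (\ref{derivative of area under graph}),
\[
\frac{d}{dt}\mathcal A_n(t)=\Big(-\tfrac{\pi}{2}-\arctan g_n'(t,y^-_{n,n}(t))\Big)-\big(-\pi\big)=\tfrac{\pi}{2}-\arctan g_n'(t,y^-_{n,n}(t))\in\big(0,\tfrac{\pi}{2}\big],
\]
so $\mathcal A_n$ is strictly increasing on $(-\infty,-C_n-1)$. Consequently $\mathcal A_n(t)<A_n$ for every $t<-C_n-1$, where $A_n:=\lim_{s\to(-C_n-1)^-}\mathcal A_n(s)$, which is finite since it is the area between two fixed bounded curves. That $A_n$ may be taken to depend only on $\{a_n\}$ follows by translating $t\mapsto t-C_n$ and noting that, up to errors controlled solely by $\{a_n\}$ and $L$, near $t=-C_n-1$ the $n$-th loop of $\mathcal C_n$ and the oval $\mathcal O_n$ are governed by the Grim Reapers $\mathcal G^-_n$, $\mathcal G^+_n$ near time $-1$.

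The points needing care are: (i) making the use of (\ref{derivative of area under graph}) rigorous for the restrictions of $g_n$ and $o_n$ with moving endpoints and with $g_n'\to-\infty$ at $y^+_{n,n}(t)$ --- cleanest by first running the whole argument for the approximating flows $\mathcal C^\alpha_n$, $\mathcal O^\alpha_n$, for which $\mathcal A^\alpha_n(-\alpha)=0$ (the loop of $\overline{\mathcal C}_n(-\alpha)$ is exactly $\overline{\mathcal O}_n(-\alpha)$) and $\mathcal A^\alpha_n$ is again monotone, and then letting $\alpha\to\infty$; (ii) verifying that the boundary angles at the turn-around points are exactly $\pm\pi/2$, which uses the $R$-symmetry and the nondegeneracy of those extrema of the height; and (iii) the uniform-in-$B_1,\dots,B_n$ control of $A_n$, i.e.\ that the large necks below the $n$-th loop do not affect $\mathcal A_n(-C_n-1)$, which is the kind of localization estimate used throughout the construction. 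I expect (iii), together with pinning down that $\mathcal A_n$ indeed remains bounded as $t\to(-C_n-1)^-$, to be the main obstacle.
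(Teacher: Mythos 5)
Your setup coincides with the paper's: working with the approximants $\mathcal C^\alpha_n$, $\mathcal O^\alpha_n$, using $\mathcal A^\alpha_n(-\alpha)=0$, and applying (\ref{derivative of area under graph}) to get that $\frac{d}{dt}\mathcal A^\alpha_n(t)$ equals the angle $\theta(t)$ that the graph makes with the $x$-axis at the self-intersection point $(0,y^{\alpha,-}_{n,n}(t))$ (the $\pm\pi/2$ contributions at the turn-around points cancel). But from that point on there is a genuine gap. The paper's essential quantitative input, borrowed from Lemma 3.5 of \cite{AQ}, is the decay estimate $|\theta(t)|\le M e^{\delta(t+C_n)}$ with $M,\delta$ depending only on $a_n$; integrating this from $t=-\alpha$ gives a bound on $\mathcal A^\alpha_n(t)$ that is uniform in $\alpha$ and in $t<-C_n-1$ and depends only on $a_n$. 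Your substitute --- the derivative lies in $(0,\pi/2]$, hence $\mathcal A_n$ is increasing and bounded by its value as $t\to(-C_n-1)^-$ --- does not deliver this. At the level of the approximants, $\theta\le\pi/2$ only yields $\mathcal A^\alpha_n(-C_n-1)\le\frac{\pi}{2}(\alpha-C_n-1)$, which blows up as $\alpha\to\infty$; so monotonicity alone gives no bound that survives the limit defining the ancient solution. At the level of the limit flow, the endpoint area is indeed finite, but finiteness is not the issue: the lemma requires $A_n$ to depend only on $\{a_n\}$ (in particular to be independent of $B_1,\dots,B_n$), because $A_n$ feeds into $\underline M_n$ in Proposition \ref{epsilon closedness - inductive prop} and the $B_n$ are chosen only afterwards, in terms of $\underline M_n$. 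Bounding $\lim_{s\to(-C_n-1)^-}\mathcal A_n(s)$ by a quantity depending only on $\{a_n\}$ is precisely the content you would need to prove.

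Your sketched remedy for this --- that near $t=-C_n-1$ the $n$-th loop and $\mathcal O_n$ are ``governed by the Grim Reapers up to errors controlled by $\{a_n\}$ and $L$'' --- is circular in the paper's scheme: such $C^0$ closeness at the tips is exactly what the paper later extracts \emph{from} the area bounds of this lemma via Lemma \ref{C^0 control of C}, rather than something available beforehand. What is missing, concretely, is an estimate showing that the slope $g^\alpha_n{}'(t,y^{\alpha,-}_{n,n}(t))$ at the crossing point is so large (the graph so nearly vertical) that $\theta(t)$ is integrable in $t$ over $(-\infty,-C_n-1)$ with constants depending only on $a_n$; this is the \cite{AQ}-type tip estimate, and without it (or an equivalent) your argument does not prove the stated lemma.
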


\begin{proof}
For any $\alpha > C_n +1$ and $t \in [-\alpha, -C_n-1]$, Let $A^\alpha (t)$ be the area bounded between $o^\alpha_n(t, \cdot)$ and $g^\alpha_n(t, \cdot)$ in $[y^{\alpha,-}_{n,n}(t), y^{\alpha, +}_{n,n}(t)]$. Note that 
\begin{equation} \label{A^alpha (-alpha) is o(1)}
A^\alpha(-\alpha) = 0,
\end{equation} 
Since $o^\alpha(-\alpha, y) = g^\alpha_n (-\alpha, y) = \bar g_n (-\alpha, y)$ for all $y\in [2\pi h_{n-1}+ a_n^{-1} \pi, 2\pi h_n]$. Using (\ref{derivative of area under graph}), we have
\begin{equation} \label{area growth formula}
\frac{d}{dt} A^\alpha (t) = \theta(t),
\end{equation}
here $\theta(t)$ is the angle $\{x = g^\alpha_n (t, y)\}$ made with the $x$-axis at $(0,y^{\alpha,-}_{n,n}(t))$. Arguing as in the proof of Lemma 3.5 in \cite{AQ}, we have $|\theta(t)| \le Me^{\delta (t+C_n)}$, where $M, \delta$ depends only on $a_n$. Together with (\ref{A^alpha (-alpha) is o(1)}) the lemma is proved by integrating (\ref{area growth formula}) and take $\alpha \to \infty$.
\end{proof}

The following Lemma is proved similarly as in Lemma 3.5 and 3.6 in \cite{AQ}. 

\begin{lem} \label{area between g_n and G^-_n}
There is $A'_n >0$ depending on $\{a_n\}$ so that for all $t < -C_n-1$, the area bounded between $\max\{ G^-_n (y) - a_n^{-1} (t+C_n),0\}$ and $|g_n(t, y)|$ in $[ y^+_{n-1,n}(t), y^-_{n,n}(t)]$ is smaller than $A'_n$.
\end{lem}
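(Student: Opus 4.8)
The plan is to follow the scheme of the proof of Lemma~\ref{area bound between g_n and o_n} (Lemmas~3.5--3.6 of \cite{AQ}): bound the corresponding quantity for the approximations $\mathcal C^\alpha_n$ by combining the area identity (\ref{derivative of area under graph}) with an exponentially small estimate for the relevant boundary angles, and then let $\alpha\to\infty$. Fix $t<-C_n-1$ and $\alpha>-t$. For $s\in[-\alpha,-C_n-1]$ put $\phi_\alpha(s,\cdot):=-g^\alpha_n(s,\cdot)$ on the ``$n$-th finger'' interval $J_\alpha(s):=[y^{\alpha,+}_{n-1,n}(s),y^{\alpha,-}_{n,n}(s)]$, on whose interior $g^\alpha_n(s,\cdot)<0$; then $\phi_\alpha(s,\cdot)\ge0$ solves the graphical CSF (\ref{CSF equation by graph}) and vanishes at the endpoints of $J_\alpha(s)$. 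Let $\psi(s,y):=G^-_n(y)-a_n(s+C_n)$, the graph function of the reflected Grim Reaper $R\,\mathcal G^-_n(s+C_n)$, which likewise solves (\ref{CSF equation by graph}) (so the lower function in the statement is $\psi^+:=\max\{\psi,0\}$; the printed $a_n^{-1}$ should be $a_n$, the speed of $\mathcal G^-_n$). The comparison principle used inside the proof of Lemma~\ref{g_m < G^-}, before passing to the limit, gives $g^\alpha_n\le-G^-_n+a_n(s+C_n)$ on the slab $\{2\pi h_{n-1}<y<2\pi h_{n-1}+\pi a_n^{-1}\}$, i.e.\ $\phi_\alpha\ge\psi$ there, and the strong maximum principle upgrades this to $\phi_\alpha>\psi$; in particular the two zeros $c(s)<d(s)$ of $\psi(s,\cdot)$ are interior points of $J_\alpha(s)$ (they are points where $\phi_\alpha>0$, and the $n$-th finger is the only finger of $\mathcal C^\alpha_n(s)$ that meets this slab, as in \cite{AQ}). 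Hence
\[
B^\alpha(s):=\int_{J_\alpha(s)}\bigl(\phi_\alpha(s,y)-\psi^+(s,y)\bigr)\,dy=\int_{J_\alpha(s)}\phi_\alpha(s,y)\,dy-\int_{c(s)}^{d(s)}\psi(s,y)\,dy\ \ge\ 0
\]
is precisely the area between $\psi^+$ and $|g^\alpha_n(s,\cdot)|$ on $J_\alpha(s)$, and it suffices to bound $B^\alpha$ uniformly in $\alpha$ and $s$.

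First I would check that $B^\alpha(-\alpha)$ is small. At $s=-\alpha$ the function $g^\alpha_n(-\alpha,\cdot)=\bar g_n(-\alpha,\cdot)$ is, on the $n$-th finger, the Grim Reaper $\mathcal G^-_n(-\alpha+C_n)$ clamped at depth $x=-a_n^{-1}$ and completed by the two short segments of Remark~\ref{gluing}(II) to $(0,2\pi h_{n-1})$ and $(0,2\pi h_{n-1}+\pi a_n^{-1})$. On the clamped Grim Reaper part $\phi_\alpha(-\alpha,\cdot)$ and $\psi^+(-\alpha,\cdot)$ coincide identically, while on the two segments $0\le\phi_\alpha(-\alpha,\cdot),\psi^+(-\alpha,\cdot)\le a_n^{-1}$ and the $y$-lengths of the segments tend to $0$ as $\alpha\to\infty$. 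Hence $B^\alpha(-\alpha)\to0$; in particular $B^\alpha(-\alpha)$ is bounded uniformly in $\alpha$.

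Next I would differentiate $B^\alpha$. Since $\phi_\alpha$ and $\psi$ solve (\ref{CSF equation by graph}) and $\phi_\alpha$, $\psi^+$ vanish at the (moving) endpoints of their supports, applying the identity (\ref{derivative of area under graph}) to the two integrals above yields
\[
\frac{d}{ds}B^\alpha(s)=\Bigl[\arctan\partial_y\phi_\alpha\big|_{y^{\alpha,-}_{n,n}(s)}-\arctan\partial_y\phi_\alpha\big|_{y^{\alpha,+}_{n-1,n}(s)}\Bigr]-\Bigl[\arctan\partial_y\psi\big|_{d(s)}-\arctan\partial_y\psi\big|_{c(s)}\Bigr],
\]
the endpoint-velocity terms dropping out because the integrands vanish there.

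The crucial --- and, I expect, hardest --- step is to control this derivative, the point being that one must \emph{not} try to make the four angles individually small: for $-s$ large each of them lies within $Me^{\delta(s+C_n)}$ of $\pm\tfrac{\pi}{2}$, with matching signs between the $\phi_\alpha$-angle and the $\psi$-angle at each end, where $M,\delta>0$ depend only on $a_n$. For $\psi$ this is a direct computation, since $\partial_y\psi=\cot\!\bigl(a_n(y-2\pi h_{n-1})\bigr)$ and at the zeros of $\psi$ one has $\sin\!\bigl(a_n(y-2\pi h_{n-1})\bigr)=e^{a_n^2(s+C_n)}$. For $\phi_\alpha$ it is precisely the barrier estimate ``$|\theta(s)|\le Me^{\delta(s+C_n)}$'' used in the proof of Lemma~\ref{area bound between g_n and o_n} (Lemma~3.5 of \cite{AQ}): near each crossing point $\mathcal C^\alpha_n(s)$ is squeezed between explicit translating Grim Reaper and line barriers that force its tangent there exponentially close to vertical, uniformly in $\alpha$. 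Granting this, both bracketed differences above equal $-\pi+O(e^{\delta(s+C_n)})$, so after enlarging $M$ we get $\bigl|\tfrac{d}{ds}B^\alpha(s)\bigr|\le Me^{\delta(s+C_n)}$. Integrating from $-\alpha$ to $t$,
\[
B^\alpha(t)\le B^\alpha(-\alpha)+\int_{-\infty}^{t}Me^{\delta(s+C_n)}\,ds=B^\alpha(-\alpha)+\tfrac{M}{\delta}e^{\delta(t+C_n)}\le B^\alpha(-\alpha)+\tfrac{M}{\delta},
\]
which by the second paragraph is bounded by a constant $A'_n$ depending only on $\{a_n\}$. Letting $\alpha\to\infty$ --- so that $g^\alpha_n\to g_n$ locally smoothly by (\ref{definition of g_n}) and $y^{\alpha,\pm}_{k,n}(t)\to y^\pm_{k,n}(t)$ --- gives the asserted bound $A'_n$ for the area between $\max\{G^-_n(y)-a_n(t+C_n),0\}$ and $|g_n(t,y)|$ on $[y^+_{n-1,n}(t),y^-_{n,n}(t)]$. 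The remaining work is bookkeeping with (\ref{derivative of area under graph}), Lemma~\ref{g_m < G^-}, and the standard structural facts about $\mathcal C^\alpha_n$.
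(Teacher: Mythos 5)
Your argument is correct and is essentially the proof the paper has in mind: the paper gives no details here, saying only that the lemma ``is proved similarly as in Lemma 3.5 and 3.6 in \cite{AQ}'', i.e.\ exactly your scheme of applying the area identity (\ref{derivative of area under graph}) to the approximations $g^\alpha_n$, using the barrier-type estimate that the crossing angles (and the angles of the comparison Grim Reaper at its zeros) are exponentially close to vertical, integrating in time, and letting $\alpha\to\infty$, just as in Lemma \ref{area bound between g_n and o_n}. Your observation that the coefficient $a_n^{-1}$ in the statement should be $a_n$ (the speed of $\mathcal G^-_n$, consistent with Lemma \ref{g_m < G^-} and Figure 4) is also correct.
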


\begin{center}
\tikzset{every picture/.style={line width=0.75pt}} %set default line width to 0.75pt        

\begin{tikzpicture}[x=0.75pt,y=0.75pt,yscale=-.7,xscale=.7]
%uncomment if require: \path (0,171); %set diagram left start at 0, and has height of 171

%Curve Lines [id:da8319508581417261] 
\draw [color={rgb, 255:red, 0; green, 0; blue, 0 }  ,draw opacity=1 ][fill={rgb, 255:red, 74; green, 144; blue, 226 }  ,fill opacity=0.4 ][line width=1.5]    (308.48,4) .. controls (343.99,6) and (577.58,10) .. (587.84,12) .. controls (598.1,14) and (612.3,26) .. (612.3,43) .. controls (612.3,60) and (597.31,73) .. (589.41,75) .. controls (581.52,77) and (362.17,83) .. (309.3,84) ;
%Straight Lines [id:da8171536038570755] 
\draw [fill={rgb, 255:red, 155; green, 155; blue, 155 }  ,fill opacity=0.29 ]   (308.48,4) -- (309.01,20) ;
%Straight Lines [id:da8507035803303808] 
\draw [fill={rgb, 255:red, 155; green, 155; blue, 155 }  ,fill opacity=0.29 ]   (309.67,66.98) -- (309.3,84) ;
%Curve Lines [id:da09041086770273377] 
\draw [color={rgb, 255:red, 208; green, 2; blue, 27 }  ,draw opacity=1 ][fill={rgb, 255:red, 255; green, 255; blue, 255 }  ,fill opacity=1 ][line width=1.5]    (309.01,20) .. controls (338.74,21.29) and (534.3,23.86) .. (542.89,25.15) .. controls (551.48,26.44) and (563.37,34.16) .. (563.37,45.1) .. controls (563.37,56.04) and (550.82,64.41) .. (544.21,65.69) .. controls (537.61,66.98) and (353.94,66.34) .. (309.67,66.98) ;
%Curve Lines [id:da5923863527229982] 
\draw [color={rgb, 255:red, 208; green, 2; blue, 27 }  ,draw opacity=1 ][fill={rgb, 255:red, 255; green, 255; blue, 255 }  ,fill opacity=1 ][line width=1.5]    (309.66,66.98) .. controls (279.93,65.68) and (84.36,63.02) .. (75.78,61.73) .. controls (67.19,60.44) and (55.3,52.71) .. (55.3,41.77) .. controls (55.3,30.83) and (67.86,22.47) .. (74.47,21.18) .. controls (81.07,19.9) and (264.74,20.62) .. (309.01,20) ;

%Straight Lines [id:da5653603410142454] 
\draw [fill={rgb, 255:red, 255; green, 255; blue, 255 }  ,fill opacity=1 ] [dash pattern={on 4.5pt off 4.5pt}]  (309.01,20) -- (309.67,66.98) ;
%Curve Lines [id:da5086053742455741] 
\draw [color={rgb, 255:red, 0; green, 0; blue, 0 }  ,draw opacity=1 ][fill={rgb, 255:red, 126; green, 211; blue, 33 }  ,fill opacity=0.45 ][line width=1.5]    (309.3,165.8) .. controls (279.83,162.99) and (83.77,153.96) .. (75.28,151.72) .. controls (66.8,149.49) and (55.84,140.72) .. (56.17,123.72) .. controls (56.49,106.73) and (66.2,98.54) .. (72.79,96.72) .. controls (79.39,94.9) and (265.35,83.79) .. (309.3,84) ;
%Curve Lines [id:da4381698829300622] 
\draw [color={rgb, 255:red, 189; green, 16; blue, 224 }  ,draw opacity=1 ][fill={rgb, 255:red, 255; green, 255; blue, 255 }  ,fill opacity=1 ][line width=1.5]    (627.3,93.8) .. controls (614,92.8) and (162.19,101.8) .. (122.3,104.8) .. controls (82.41,107.8) and (77.54,107.8) .. (77.54,123.8) .. controls (77.54,139.8) and (86.9,141.46) .. (121.3,144.8) .. controls (155.7,148.14) and (613.34,157.8) .. (628.3,157.8) ;
%Straight Lines [id:da5614078629878709] 
\draw    (309.3,84) -- (309.3,98.8) ;
%Straight Lines [id:da3447135275579327] 
\draw [fill={rgb, 255:red, 255; green, 255; blue, 255 }  ,fill opacity=1 ] [dash pattern={on 4.5pt off 4.5pt}]  (309.3,150.8) -- (309.3,98.8) ;
%Straight Lines [id:da2718626941203386] 
\draw    (309.3,150.8) -- (309.3,165.8) ;

% Text Node
\draw (5,6.4) node [anchor=north west][inner sep=0.75pt]  [color={rgb, 255:red, 208; green, 2; blue, 27 }  ,opacity=1 ]  {$\mathcal{O}_{n}( t)$};
% Text Node
\draw (565,125.4) node [anchor=north west][inner sep=0.75pt]  [color={rgb, 255:red, 189; green, 16; blue, 224 }  ,opacity=1 ]  {$\mathcal{G}^{-}_{n}( t+C_{n})$};

%\draw   (289.67, 117.09) circle [x radius= 5, y radius= 5]   ;
%\draw   (289.91, 183.13) circle [x radius= 5, y radius= 5]   ;
%\draw   (289.48, 65.34) circle [x radius= 5, y radius= 5]   ;
%\draw   (289.6, 97.49) circle [x radius= 5, y radius= 5]   ;
%\draw   (289.3, 45.8) circle [x radius= 5, y radius= 5]   ;
%\draw   (289.48, 64.8) circle [x radius= 5, y radius= 5]   ;
\end{tikzpicture}

Figure 4: Area(Blue region)$\le A_n$, Area(Green region)$\le A_n'$. 
\end{center}
\begin{lem} \label{area between g_n and g_m}
For any $m, n\in \mathbb N$ with $m >n$, the signed area bounded between $g_n$ and $g_m$ is 
\begin{equation}
 L (a_{n+1}^{-2} + \cdots + a_m^{-2})
\end{equation}
whenever both $g_n, g_m$ are defined.
\end{lem}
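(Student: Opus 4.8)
Write $I_k=I_k(t)=[y^-_{0,k}(t),y^+_{k,k}(t)]$ for the domain of $g_k(t,\cdot)$ and $I^\alpha_k$ for that of $g^\alpha_k(t,\cdot)$, and let $S_{n,m}(t)$ denote the signed area between $g_n$ and $g_m$. The plan is to prove that $S_{n,m}(t)$ does not depend on $t$, and then to evaluate the constant by pushing the computation back onto the broken curves. First, $S_{n,m}(t)$ is well defined: by Lemma \ref{g_m ge g_n}, $g_m>g_n$ wherever both are defined, so $g_m(t,y^-_{0,n}(t))>g_n(t,y^-_{0,n}(t))=0$ and $g_m(t,y^+_{n,n}(t))>0$; hence $I_n$ lies in the interior of $I_m$, and extending $g_n$ by $0$ outside $I_n$ we have $g_n\le g_m$ on $I_m$ with $S_{n,m}(t)=\int_{I_m}g_m(t,\cdot)-\int_{I_n}g_n(t,\cdot)$.

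Fix $\alpha>C_m+1$. For $t\in(-\alpha,-C_m-1)$ the curves $\mathcal C^\alpha_k(t)$, $k\in\{n,m\}$, are smooth, and $g^\alpha_k$ satisfies \eqref{CSF equation by graph} and vanishes at the endpoints of $I^\alpha_k$, so \eqref{derivative of area under graph} applied to $g^\alpha_k$ gives
\begin{equation*}
\frac{d}{dt}\int_{I^\alpha_k}g^\alpha_k(t,y)\,dy=\arctan\big((g^\alpha_k)'(t,y^{\alpha,+}_{k,k}(t))\big)-\arctan\big((g^\alpha_k)'(t,y^{\alpha,-}_{0,k}(t))\big).
\end{equation*}
The endpoints of $I^\alpha_k$ are the (unique) lowest and highest points of the smooth immersed closed curve $\mathcal C^\alpha_k(t)$; at such a point the tangent line is horizontal, so along the graph $x=g^\alpha_k(t,y)$ one has $(g^\alpha_k)'\to+\infty$ as $y\downarrow y^{\alpha,-}_{0,k}(t)$ and $(g^\alpha_k)'\to-\infty$ as $y\uparrow y^{\alpha,+}_{k,k}(t)$. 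Thus the right-hand side above equals $-\pi/2-\pi/2=-\pi$, independently of $k$, and hence $t\mapsto\int_{I^\alpha_m}g^\alpha_m(t,\cdot)-\int_{I^\alpha_n}g^\alpha_n(t,\cdot)$ is constant on $(-\alpha,-C_m-1)$. Letting $t\downarrow-\alpha$ and using $g^\alpha_k(-\alpha,\cdot)=\bar g_k(-\alpha,\cdot)$ together with $\bar g_m(-\alpha,\cdot)=\bar g_n(-\alpha,\cdot)$ on $(-\pi,2\pi h_n)$ (Remark \ref{gluing}), this constant equals $\int_{2\pi h_n}^{2\pi h_m}\bar g_m(-\alpha,y)\,dy$. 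Since $g^\alpha_k(t,\cdot)\to g_k(t,\cdot)$ as $\alpha\to\infty$ with no loss of area on the relevant pieces---which is what the $\alpha$-uniform area bounds of Lemmas \ref{area bound between g_n and o_n} and \ref{area between g_n and G^-_n} provide---it follows that $S_{n,m}(t)=\lim_{\alpha\to\infty}\int_{2\pi h_n}^{2\pi h_m}\bar g_m(-\alpha,y)\,dy$, which in particular is independent of $t$.

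It remains to evaluate this limit, which splits as $\sum_{k=n+1}^{m}\int_{2\pi h_{k-1}}^{2\pi h_k}\bar g_m(-\alpha,y)\,dy$. On $(2\pi h_{k-1},2\pi h_{k-1}+\pi a^{-1}_k)$ the broken curve $\bar g_m(-\alpha,\cdot)$ is a truncation of $\mathcal G^-_k(-\alpha+C_k)$ joined to the $y$-axis by two short segments, and on $(2\pi h_{k-1}+\pi a^{-1}_k,2\pi h_k)$ it is a truncation of $\mathcal G^+_k(-\alpha+C_k-La^{-2}_k)$, again with two short segments. As $\alpha\to\infty$ the segment contributions tend to $0$; each Grim Reaper arc has width tending to $\pi a^{-1}_k$ and depth (resp.\ height) growing like $a_k(\alpha-C_k)$, producing divergent contributions $\mp\pi(\alpha-C_k)$ that cancel between the $\mathcal G^-_k$ and $\mathcal G^+_k$ halves; the integrals of $G^\mp_k$ over one period, equal to $\pm\tfrac{\pi\ln2}{a^2_k}$ by $\int_0^\pi\ln\sin y\,dy=-\pi\ln2$, likewise cancel; and the only surviving term is the one produced by the additional time-shift $La^{-2}_k$ carried by $\mathcal G^+_k$, which displaces that arc by $a_k\cdot La^{-2}_k=La^{-1}_k$ in the $x$-direction over a width tending to $\pi a^{-1}_k$. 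Hence the $k$-th term tends to $La^{-2}_k$, and
\begin{equation*}
S_{n,m}(t)=L\big(a^{-2}_{n+1}+\cdots+a^{-2}_m\big).
\end{equation*}

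The routine part here is the period-by-period evaluation of the broken-curve integral. The two points needing care are: (a) the vertical-tangent behaviour of the graph at the caps, hence the values $\pm\pi/2$ of the two arctangents---this uses that near each cap $\mathcal C^\alpha_k(t)$ is the smooth gluing of the two graphs $x=\pm g^\alpha_k(t,\cdot)$, which forces $|(g^\alpha_k)'|\to\infty$ there, together with the smoothness of the CSF issuing from the piecewise-linear broken curve (Remark \ref{CSF starting at Lipschitz curve}, \cite{A91}); and (b) the interchange of $\lim_{\alpha\to\infty}$ with the area integrals, which uses the area bounds already established. I expect (a) to be the main obstacle.
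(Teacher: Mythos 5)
Your proof follows essentially the same route as the paper's: the signed area between the two flows is constant in $t$ because, by \eqref{derivative of area under graph} and the vertical tangents at the two caps, each graph's area changes at the common rate $-\pi$, so one evaluates the constant on the broken curves at $t=-\alpha$ and lets $\alpha\to\infty$, exactly as the paper does (the paper just reduces first to $m=n+1$ and states the broken-curve computation without detail). The only slip is in your final evaluation: a horizontal displacement of $La_k^{-1}$ over a width tending to $\pi a_k^{-1}$ gives $\pi La_k^{-2}$ per period, not $La_k^{-2}$, so your last line drops a factor of $\pi$ --- the same normalization discrepancy already present in the paper's statement --- which is harmless because $L>0$ is a free parameter and only the $a_k^{-2}$ decay of the bound is ever used.
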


\begin{proof}
It suffices to show the case when $m=n+1$. We first calculate the area bounded between $g^\alpha_n(t, \cdot) $ and $g^\alpha_{n+1}(t, \cdot)$ for $t>-\alpha$ and then take $\alpha \to \infty$. Since $g^\alpha_n$, $g^\alpha_{n+1}$ both satisfy the CSF equation, the area bounded between $g^\alpha _n(t, \cdot)$ and $g^\alpha_{n+1}(t, \cdot)$ is independent of $t$ by (\ref{derivative of area under graph}). Thus it suffices to calculate the area $A^\alpha$ bounded between $\bar g_n (-\alpha, \cdot)$ and $ \bar g_{n+1}(-\alpha, \cdot)$. From the construction described in Remark \ref{gluing}, since $\bar g_n(-\alpha, \cdot) = \bar g_{n+1}(-\alpha, \cdot)$ on $[-\pi, 2\pi h_{n-1}]$, we have 
$$A^\alpha = L a_{n+1}^{-2} + o(1),$$
as $\alpha \to \infty$. Hence the lemma is proved by taking $\alpha \to \infty$. 
\end{proof}

Before we move to the next section, we prove a lemma which provides a $C^0$ estimates using the area bound. In the following, we assume that either $r(t, y) = G(y) - t$ or $r(t, y) = o_0(t, y)$.

\begin{lem} \label{C^0 control of C}
Given any $A>0$, there are $E, \underline M>0$ depending only on $A$ such that the following holds: For any $M \ge \underline M$ and $T\in \mathbb R$ (when $r(t, \cdot) = o_0(t,\cdot)$, we also assume $T \le T^a$, so that $\max_y o_0(T^a, y) >1$). Let 
$$\{ \mathcal C(t) :t\in  [T -M, T]\}$$ 
be a CSF of immersed curves in $\mathbb R^2$. Assume that for any $t\in [T-M, T]$, $\mathcal C(t)$ is represented as a graph 
\begin{equation}
 \{ x = g_{\mathcal C}(t, y) :  y^-(t) \le y\le y^+(t)]\}
\end{equation}
inside the region (here $R_T = \max_y r(T, y)$)
$$\mathcal R:= [R_T-1, \infty)\times I$$ 
with $g_{\mathcal C} (t, y^\pm (t) ) = R_T-1$ and $I$ is an interval with length $\le 4\pi$. Also, 
\begin{enumerate}
\item for each $t$, $g_{\mathcal C}(t, y)$ has exactly one local maximum and no local minimum (except at the boundary points $y^\pm (t)$),
\item $g_{\mathcal C}(t, y) > r(t, y)$ for all $t\in [T-M, T]$, and 
\item the area bounded between the two graphs $g_{\mathcal C}(t, y)$, $r(t, y)$ in $\mathcal R$ is smaller than $A$. 
\end{enumerate}
Then $\max_y g_{\mathcal C}(T, y) - R_T<E$.  
\end{lem}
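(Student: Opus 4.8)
The plan is to argue by contradiction: assume $\max_y g_{\mathcal C}(T,y)-R_T\ge E$ for some large $E=E(A)$ to be fixed at the end, and derive a contradiction provided $M\ge\underline M(A)$. The starting point is monotonicity of the height. Since $g_{\mathcal C}$ solves the graphical CSF equation (\ref{CSF equation by graph}) and, by hypothesis (1), attains its maximum at an interior point $y_0(t)$ with $\partial_y g_{\mathcal C}(t,y_0(t))=0$, one has $\tfrac{d}{dt}\max_y g_{\mathcal C}(t,\cdot)=\partial_y^2 g_{\mathcal C}(t,y_0(t))\le 0$; hence $t\mapsto\max_y g_{\mathcal C}(t,\cdot)$ is non-increasing and $\max_y g_{\mathcal C}(t,\cdot)\ge R_T+E$ for all $t\in[T-M,T]$.

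The second step combines the area bound with the fact that the reference solution stays low near time $T$. For $r(t,y)=G(y)-t$ one has $\max_y r(t,\cdot)=-t=R_T+(T-t)$, and for $r=o_0$ a comparable bound holds (here the assumption $T\le T^a$ keeps the Angenent oval's bump nondegenerate); in either case $\max_y r(t,\cdot)\le R_T+(T-t)+C_0$ for a universal $C_0$. So for $t$ in a fixed subinterval near the end, working at the level $\ell(t):=R_T+(T-t)+C_0+1$, the portion of $g_{\mathcal C}(t,\cdot)$ above $\ell(t)$ is an arch (a ``spike''); by hypothesis (2) it lies above $r$, so by hypothesis (3) the region $\{\ell(t)\le x\le g_{\mathcal C}(t,y)\}$ has area $\le A$, while by the first step the spike rises to height $\ge R_T+E$. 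Thus, throughout this time interval, $\mathcal C$ carries a spike of area $\le A$ and height $\gtrsim E$.

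The heart of the matter is a barrier/comparison estimate showing such a spike cannot persist. Morally: an arch of width $\le 4\pi$ and area $\le A$ is, wherever it is much taller than $A$, very thin, hence has large curvature at its tip, so its tip recedes downward at a rate governed essentially by that curvature — a rate that does not slow down as the spike shortens — and therefore erases the entire excess height within a time depending only on $A$, not on how tall the spike started. I would make this quantitative either by comparing the tip region of $\mathcal C(t)$ with a contracting barrier adapted to the width bound (a Grim Reaper of width $\sim 4\pi$, or a shrinking oval/Angenent oval placed over the spike) and invoking the comparison principle as in the proof of Lemma \ref{g_m ge g_n} and \cite[Theorem 4.3]{MP}, or via a differential inequality for $\max_y g_{\mathcal C}(t,\cdot)-\max_y r(t,\cdot)$ obtained by applying the area identity (\ref{derivative of area under graph}) to the superlevel sets of $g_{\mathcal C}$ and of $r$. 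Taking $\underline M(A)$ larger than the time this decay needs, the spike must be gone by time $T$, which forces $\max_y g_{\mathcal C}(T,\cdot)\le\max_y r(T,\cdot)+E(A)=R_T+E(A)$, contradicting the standing assumption once $E$ is chosen to be this $E(A)$.

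The step I expect to be the main obstacle is precisely this last one: quantifying the spike's decay uniformly in its initial height. Two subtleties must be handled. First, the barrier has to stay disjoint from \emph{all} of $\mathcal C(t)$ — including whatever of the curve lies at $x<R_T-1$ or near the moving endpoints $y^\pm(t)$ where $g_{\mathcal C}=R_T-1$; this is why one wants a barrier confined to a slab of bounded width (so it cannot slip past through the boundary) together with a flow time $M$ long enough for it to sweep across the spike. Second, the spike need not be uniformly thin: it can consist of a ``fat'' cap of width $\sim A$ and height $O(1)$ sitting atop a tall thin stem, so the decay estimate must marry the slow motion of the fat part (bounded because of the area constraint) with the fast recession of the thin part, while bookkeeping that only a bounded total height has to be removed.
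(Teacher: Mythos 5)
Your opening step (the maximum of $g_{\mathcal C}$ is non-increasing, so the spike persists on all of $[T-M,T]$) matches the paper, but the step you yourself flag as the main obstacle is a genuine gap, and none of the tools you propose closes it. The central claim — that an arch of area at most $A$ and width at most $4\pi$ must shed \emph{all} of its excess height within a time depending only on $A$, uniformly in its initial height — is never established. A Grim Reaper barrier confined to the slab $I$ must initially be placed above the spike, whose height at time $T-M$ is not controlled by any hypothesis, and it translates at bounded speed, so the time it needs to sweep down is proportional to that uncontrolled height rather than to $A$. A shrinking circle or Angenent oval placed over the spike must be disjoint from $\mathcal C(t)$, hence sits beyond the tip and only prevents the tip from rising; it cannot force it down. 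And the area identity (\ref{derivative of area under graph}) applied to superlevel sets bounds the rate of \emph{area} loss by turning angles (at most $\pi$), not the rate of \emph{height} loss: the tip speed is the tip curvature, which can be pointwise small in exactly your ``fat cap on a thin stem'' scenario, so no differential inequality for $\max_y g_{\mathcal C}-\max_y r$ follows from it. As written, the proof therefore does not go through.

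The paper avoids the dynamic decay statement entirely and argues in the reverse direction: long persistence forces small curvature, and small curvature forces large area. If $\max_y g_{\mathcal C}(T,\cdot)-R_T>E$, then by monotonicity the portion of $\mathcal C(t)$ in $[R_T-1,R_T+E]\times I$ is, by hypothesis (1), a union of two graphs over $x$ with $C^0$ oscillation at most $4\pi$ for the whole time interval of length $M$; the gradient estimate \cite[Corollary 5.3]{ESIII} and the interior estimate \cite[Corollary 3.2 (i)]{EH} then give $\sup|\kappa|\le C/\sqrt{M}$ at time $T$. The mean value theorem (oscillation $\le 4\pi$ over height $E$) provides a point of slope at most $4\pi/E$ on each side, and integrating the curvature bound (with $M\ge CE^4$) keeps both slopes at most $5\pi/E$ on $[R_T-1,R_T-1+K]$, so the two sides remain separated by at least $\ell/2$ (where $\ell$ is the width of $\{r(T,\cdot)>R_T-1\}$, bounded below thanks to $T\le T^a$) up to height $K=2(A+5\pi)/\ell$. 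This forces the area between $g_{\mathcal C}(T,\cdot)$ and $r(T,\cdot)$ to exceed $A$, contradicting (3). In short, the paper converts ``tall for a long time'' into ``nearly straight sides'' and a static area contradiction, whereas your route requires a uniform tip-decay estimate that is of essentially the same difficulty as the lemma itself; to repair your argument you would either need to prove that estimate or switch to the smoothing-estimate mechanism above, and hypothesis (1) together with the length-$M$ persistence is precisely what makes the latter applicable.
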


\begin{proof}
At time $t = T$, the graph $\{x = r(t, y) \}$ intersects with the line $\{ x = R_T -1 \}$ at two points $(R_T-1, c)$, $(R_T-1, d)$ with $c<d$. Note that $d-c\ge \ell$ for some positive constant $\ell$ (if $r(t, y) = G(y) -t$, $d-c$ is constant). Define
\begin{align}  \label{definition of K}
 K = \frac{2(A+5\pi)}{\ell}
 \end{align}
 and 
 \begin{align} \label{definition of E_n}
 E = \frac{20\pi K}{\ell}. 
\end{align}
Note that $K, E$ depend only on $A$.  
 
We argue by contradiction. Assume that $\max_y g_{\mathcal C} (T,y)-R_T >E$. From the CSF equation (\ref{CSF equation by graph}), the maximum value of $g_{\mathcal C}$ is non-increasing in $t$. As a result, 
$$\max_y g_{\mathcal C} (t,y)-R_T >E$$
for all $t\in [T-M, T]$ and in the region
$$ [R_T-1,R_T+E]  \times I,$$
the curve $\mathcal C(t)$ is a union of two graphs in $x$ with uniform $C^0$ bound by assumption (1). By the gradient estimates for graphical MCF \cite[Corollary 5.3]{ESIII}, assuming that $\underline M \ge 1$, the gradient of these two graphs are also bounded uniformly. Let $v_1(x), v_2(x)$ be those two graphs at time $t = T$ with $v_1<v_2$. 

Using assumption (2) and the Mean Value Theorem, there are $x_1, x_2\in ( R_T-1, R_T +E)$ so that 
$$ |v_i'(x_i)| \le 4\pi /E. $$
Then for all $x\in [R_T-1, R_T-1 +K]$, 
\begin{align*}
  |v_i'(x)| &\le |v_i'(x_i)| + \sup |v_i ''| |x-x_i| \\
  &\le 4\pi/ E + \sup|v_i''| E.
  \end{align*}
Next we use $|v'_i|\le C$, (\ref{curvature represented as graph}) for $i=1,2$ to obtain
\begin{align} \label{bounds on first derivatives} 
  |v_i'(x)| \le 4\pi /E + C\sup|\kappa_i| E. 
  \end{align}
Using the interior estimates \cite[Corollary 3.2 (i)]{EH}, we have
\begin{align} \label{interior estimates of kappa}
\sup |\kappa_i | \le \frac{C}{\sqrt {M}}
\end{align}
for some universal constant $C$. Now we choose 
\begin{equation} \label{definition of underline M_n}
 \underline M = CE^4, 
\end{equation}
here $C$ is a suitable universal constant so that when $M \ge \underline M$, (\ref{bounds on first derivatives}) and (\ref{interior estimates of kappa}) imply
\begin{align*}
 |v_i'(x)| \le 5\pi /E, \ \ \ \forall x\in [R_T -1, R_T-1+K]. 
\end{align*}
Hence we have 
$$ v_2-v_1 \ge d-c  - 10\pi K/E \ge  \frac{\ell}{2}$$
for all $x\in [R_T-1, R_T-1+K]$ by (\ref{definition of E_n}). By (\ref{definition of K}), we have 
$$\int _{R_T-1}^{R_T-1+K} (v_2-v_1) dx \ge \frac{\ell}{2}K=A+5\pi.$$
But this is impossible, since by assumption (3) and that $v_2-v_1 \le 4\pi$, 
\begin{align*}
A \ge \int _{R_T}^{R_T-1+K} (v_2-v_1) dx \ge \int _{-R_T-1}^{R_T-1+K} (v_2-v_1) - 4\pi.
\end{align*}
Thus we have arrived at a contradiction and finished the proof of the Lemma.
\end{proof}

\section{$C^0$-estimates between $\mathcal C_n(t)$, $\mathcal C_m(t)$} \label{C^0 estimates}
In this section we compare $\mathcal C_n(t)$ with $\mathcal C_m(t)$ for some particular time $t$. First we construct a sequence of ancient solutions to the CSF $\{\mathcal B_n(t)\}$, which are used to show the embeddedness of $\mathcal C_n(t)$. 

For any $L>0$ and $t <-1$, let $\overline b(t,\cdot): (0, 2\pi)\to \mathbb R$ be formed by gluing two Grim Reapers similar as in Remark \ref{gluing}. That is,
\begin{itemize}
\item $\overline b(t,y) = -G(y) + t$ if $\arcsin e^{1+t}\le y\le \pi - \arcsin e^{1+t}$, 
\item $\overline b(t,y) = G(y-\pi) - (t - L)$ if $\pi + \arcsin e^{1+t-L}\le y \le 2\pi - \arcsin e^{1+t-L}$, and
\item $\overline b(t,y)$ is linear in the intervals $(0, \arcsin e^{1+t})$, $(\pi - \arcsin e^{1+t}, \pi)$, $(\pi, \pi + \arcsin e^{1+t-L})$, and $(2\pi - \arcsin e^{1+t-L},2\pi)$, joining
\begin{itemize}
\item $(0,0)$ to $(-1, \arcsin e^{1+t})$, 
\item $(-1, \pi - \arcsin e^{1+t})$ to $(0,\pi)$,
\item $(0,\pi)$ to $(1, \pi + \arcsin e^{1+t-L})$, and 
\item $(1, 2\pi - \arcsin e^{1+t-L})$ to $(0,2\pi)$.  
\end{itemize}
\end{itemize}

Let $\overline{\mathcal B} (t)$ be the curve
\begin{equation}
\overline{\mathcal B}(t)= \big(\{ \overline b(t,y) : 0\le y\le 2\pi\} \cup \{ (x ,0): x\ge 0\}\big)^{re}.
\end{equation}

For any $\alpha >1$, let $\mathcal B^\alpha (t)$ be the CSF with $\mathcal B^\alpha (-\alpha) = \overline{\mathcal B}(-\alpha)$. For each $t > -\alpha$, $\mathcal B^\alpha(t)$ is a union of two graphs in $y$. Hence we write 
\begin{equation}
\mathcal B^\alpha (t) = \{ x = b^\alpha (t,y) : 0< y \le b^\alpha(t)\}^{re},
\end{equation}
where $b^\alpha(t, \cdot)$ satisfies the CSF (\ref{CSF equation by graph}) and $b^\alpha (-\alpha, y) = \bar b(-\alpha, y)$ for all $y>0$. 

As in \cite{AQ}, one can show that $\mathcal B^\alpha(t)$ converges to an ancient solution to the CSF $\mathcal B(t)$ as $\alpha \to\infty$. We write 
\begin{equation}
\mathcal B(t) = \{ x = b(t,y) : 0< y \le b(t)\}^{re},
\end{equation}
where 
$$ b(t, y) = \lim_{\alpha\to \infty} b^\alpha(t, y).$$

\bigskip

\begin{center}
\begin{tikzpicture}
\draw (-5,-0.3) .. controls (3,-0.3) .. (3,0);
\draw (5,-0.3) .. controls (-3,-0.3) .. (-3,0);
\draw (3,0) .. controls (3,0.3) and (-4, 0.3).. (-4,0.6);
\draw (-3,0) .. controls (-3,0.3) and (4, 0.3).. (4,0.6);\draw (4,0.6) .. controls (4, 0.9) .. (0,0.9);
\draw (-4,0.6) .. controls (-4, 0.9) .. (0,0.9);\end{tikzpicture}

Figure 5: the barrier $\mathcal B(t)$.
\end{center}
\bigskip

Arguing as in Lemma \ref{area between g_n and g_m}, one has 
\begin{equation}
 \int_0^{b(t)} b(t, y) dy = L
\end{equation}
for all $t$ so that $b(t, \cdot)$ is defined. 

\begin{lem} \label{B(t) becomes embedded}
There is $\overline L>0$ so that whenever $L> \overline L$, $\mathcal B(t)$ becomes embedded at some time (and thus is defined for all $t$). 
\end{lem}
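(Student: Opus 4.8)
The plan is to show that the ancient solution $\mathcal B(t)$, built by gluing two oppositely--moving Grim Reapers separated by vertical distance $L$, must lose its figure--8 self--intersection in finite time once $L$ is large. The mechanism: the ``small'' loop (the part of the curve between the two inner asymptotes, modeled on a Grim Reaper that has been translating for the shorter amount of time) is a compact region of bounded area $L$, and as $t$ increases the two Grim Reaper pieces translate toward each other, so the loop shrinks; meanwhile the loop has curvature controlled from below away from its tips, so it cannot persist indefinitely as an embedded-but-pinched configuration. Concretely, the figure--8 point of $\overline{\mathcal B}(t)$ sits near $(0,\pi)$, where the lower Reaper is at height $\approx t$ and the upper Reaper at height $\approx t-L$; the vertical gap between the two arcs at the symmetry axis is what must be forced to zero.

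First I would set up the comparison that pins the two lobes. By construction $\overline b(t,y)$ on $(0,\pi)$ lies below the full Grim Reaper $\mathcal G_0$ translating with speed $1$ shifted appropriately, and $\overline b(t,y)$ on $(\pi,2\pi)$ lies below the Grim Reaper translating with speed $1$ delayed by $L$; after running the flow and taking $\alpha\to\infty$, the avoidance principle (as in Lemma \ref{g_m < G^-} and Lemma \ref{g_m ge g_n}) gives two--sided Grim Reaper barriers for $b(t,\cdot)$ on each lobe. In particular, for $t$ in a bounded window the ``upper lobe'' of $\mathcal B(t)$ is trapped in a region whose vertical extent is $O(1)$ while its horizontal width is controlled by the area identity $\int_0^{b(t)} b(t,y)\,dy = L$. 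I would then invoke Lemma \ref{C^0 control of C} (with $r(t,y) = G(y)-t$, $A$ taken to be a fixed multiple of $L$, and $M$ the allowed look--back time) to get that the maximal height of the relevant graph stays within an explicit constant $E = E(L)$ of the Grim Reaper tip.

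The crux is then a quantitative ``the loop cannot survive'' argument. Suppose for contradiction that $\mathcal B(t)$ stays non-embedded for all $t$ on which it is defined (equivalently, the figure--8 never unfolds). Since $b^\alpha(t,\cdot)$ and its limit $b(t,\cdot)$ satisfy the graphical CSF \eqref{CSF equation by graph}, the signed area under the inner lobe is preserved and equals $L$; but the two bounding Grim Reaper arcs that sandwich this lobe converge toward one another as $t\to$ (the would-be extinction time), forcing the region available to the lobe to have area $o(L)$ — a contradiction once $L>\overline L$. To make this rigorous I would: (i) identify the inner lobe precisely as $\{(x,y): |x|\le b(t,y),\ \pi \le y\le 2\pi\}$ minus the outer lobe, and track its two defining self-intersection heights; (ii) use the Grim Reaper barriers from the first step to show these heights pinch together, so the enclosed region shrinks to a segment; (iii) use the constancy of the enclosed area together with the $C^0$/gradient bounds from Lemma \ref{C^0 control of C} and the interior estimates of \cite{EH}, \cite{ESIII} to derive the area contradiction for $L$ large; hence the figure--8 must disappear at some finite time, and after that time the strong maximum principle (applied as in the proof of Lemma \ref{g_m ge g_n}) keeps $\mathcal B(t)$ embedded, so it exists for all remaining $t$.

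The main obstacle I anticipate is step (ii): making precise the sense in which ``the two Grim Reaper arcs converge and squeeze the lobe,'' because near the asymptotes the arcs go off to $x = +\infty$ and the lobe is not uniformly thin there. This is exactly the difficulty the paper flags — needing $C^0$ control when $-t$ is \emph{not} large — and the intended resolution is to localize away from the asymptotes using the slab confinement of each Grim Reaper piece and to absorb the tip regions into the error term via the area bound, i.e. to run the argument through Lemma \ref{C^0 control of C} rather than through convexity. A secondary technical point is ensuring the threshold $\overline L$ can be chosen uniformly (independent of which time window one looks at), which should follow since all the constants $E, \underline M, K$ in Lemma \ref{C^0 control of C} depend only on the area bound $A \sim L$, not on $T$.
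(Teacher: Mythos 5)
Your central mechanism does not hold up, and the step you yourself flag as the main obstacle is precisely the missing idea. The engine of your contradiction is the claim that ``the signed area under the inner lobe is preserved and equals $L$.'' What is actually conserved is $\int_0^{b(t)}b(t,y)\,dy=L$, the signed area under the \emph{whole} graph; the area of the lobe cut off by the self-intersection points is not conserved, since by \eqref{derivative of area under graph} the rate of change of the area of a sub-region is a difference of turning angles at its endpoints, and at self-intersection points these do not cancel. Indeed, if the lobe's area were a conserved positive quantity, the lobe could never disappear and $\mathcal B(t)$ could never become embedded -- your proposed invariant contradicts the very statement being proved; and if you instead mean the total area $L$, there is no squeezing: that region extends along the asymptote and is never confined to area $o(L)$. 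The squeezing itself is also unsubstantiated: the two translating Grim Reapers live in the disjoint slabs $\{0<y<\pi\}$ and $\{\pi<y<2\pi\}$ and simply pass one another; they furnish only one-sided barriers for $b$, and once the tips reach the gluing region near the $y$-axis the solution detaches from them, so they do not pinch the lobe. Lemma \ref{C^0 control of C} cannot fill this gap -- it only bounds how far a tip overshoots a model Grim Reaper, which says nothing about the loop unfolding. Finally, nothing in your sketch genuinely uses $L>\overline L$: as written it would ``prove'' embeddedness for every $L>0$.

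For comparison, the paper's proof is entirely different: it reduces embeddedness to $b(T,\cdot)>0$ at a single time $T$, notes monotonicity of $b$ in $L$, and sweeps an auxiliary Grim Reaper $\mathcal G(t)$ of width larger than $\pi$, translating in the positive $x$-direction, underneath $\mathcal B(-1)$ (tip below the minimum of $b(-1,\cdot)$); this Reaper exits $\{x\le 0\}$ at some time $T$. Because $\mathcal G(-1)$ and $\mathcal B(-1)$ intersect at one point, avoidance alone does not preserve their ordering; instead, for $L$ large one can insert an Angenent oval whose enclosed area is large enough (proportional to $T$) that it survives past time $T$, remains disjoint from both flows, and thereby preserves the ordering $g_t(y_0)<b(t,y_0)$ at a fixed height $y_0$. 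A comparison argument on the strip $[y^0,y_0]$ (using $b\to+\infty$ as $y\to0^+$ on the other side) then gives $b(T,\cdot)>0$. This is where the largeness of $L$ enters -- to make room for the oval -- not through the constants of Lemma \ref{C^0 control of C}. To salvage an area-based route you would need a genuinely monotone quantity attached to the lobe itself together with two-sided confinement near the axis; neither is provided by your sketch.
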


\begin{proof}
Observe that $\mathcal B(t)$ becomes embedded if $b(t, y) >0$ for all $y\in (0, b(t))$. Also, if $b_1(t, y), b_2(t, y)$ are constructed from above using $L=L_1, L_2$ respectively with $L_1<L_2$, then 
\begin{equation} \label{b_1 <b_2}
b_1(t, y) \le b_2(t, y)
\end{equation} 
by maximum principle. Thus it suffices to show the embeddedness for $L$ large enough. To this end, let $b_{min}$ be the global minimum of $b(-1, y)$,  where $b(t, y)$ is defined using a fixed $L_0$. We assume that $b_{min}<0$, or there is nothing to prove. Let $\mathcal G(t)$ be a Grim Reaper so that 
\begin{itemize}
 \item $\mathcal G(t)$ is symmetric about $\{y = \pi/2\}$ and is moving in the positive $x$ direction, and
 \item has width larger than $\pi$,
 \end{itemize}
 and if we represent $\mathcal G(t)$ as a graph of the function $\{x=g_{t} (y)\}$, 
 \begin{itemize}
 \item the minimum of $g_{-1}(y)$ is smaller than $b_{min}$, and 
 \item the graphs of $g_{-1}(y)$ and $b(-1, y)$ intersect at only one point, which is in the region $\{y >3\pi/2\}$.
\end{itemize}

Using (\ref{b_1 <b_2}), one sees that the same conditions are satisfied by $\mathcal G(t)$ if $b(t, y)$ is defined using any $L \ge L_0$. Being a translator, $\mathcal G(t)$ leaves the region $\{ x\le 0\}$ at some time $T>-1$. 
Now we choose $L$ large enough (depending on $T$)  so that the following is true: there is an Angenent oval $\gamma$ in $\mathbb R^2$ so that 
$$\gamma \subset \{ (x, y)\in \mathbb R^2: x\in (\pi, 3\pi/2),  \max\{ g_{-1}(y), 1\} < y < b(-1, y) \},$$
$\gamma$ is symmetric about $\{y=y_0\}$ for some $y_0 \in (\pi, 3\pi/2)$ and the area enclosed by $\gamma$ is larger then $(T+1)/2\pi$. 

\begin{center}
\tikzset{every picture/.style={line width=0.75pt}} %set default line width to 0.75pt        

\begin{tikzpicture}[x=0.75pt,y=0.75pt,yscale=-0.5,xscale=0.5]
%uncomment if require: \path (0,329); %set diagram left start at 0, and has height of 329

%Curve Lines [id:da9202784657185294] 
\draw [color={rgb, 255:red, 144; green, 19; blue, 254 }  ,draw opacity=1 ][line width=1.5]    (658.24,274.41) .. controls (587.18,277.88) and (66.06,272.67) .. (42.38,264) .. controls (18.69,255.32) and (6.33,234.5) .. (7.36,189.38) .. controls (8.39,144.26) and (20.75,130.38) .. (40.32,118.23) .. controls (59.88,106.09) and (528.48,93.94) .. (660.3,93.94) ;
%Curve Lines [id:da34622669259414574] 
\draw [color={rgb, 255:red, 126; green, 211; blue, 33 }  ,draw opacity=0.89 ][line width=1.5]    (659.27,239.7) .. controls (228.06,241.78) and (30.02,260.53) .. (35.17,210.2) .. controls (15.6,125.18) and (578.94,211.94) .. (584.09,113.03) .. controls (589.24,57.5) and (194.8,87) .. (157.72,81.79) ;
%Curve Lines [id:da06729251421935434] 
\draw [color={rgb, 255:red, 208; green, 24; blue, 2 }  ,draw opacity=1 ][line width=2.25]    (280.3,123.2) .. controls (289.3,115.2) and (377.3,111.2) .. (436.3,111.2) .. controls (495.3,111.2) and (481.3,111.2) .. (504.3,112.2) .. controls (527.3,113.2) and (576.3,116.2) .. (555.3,131.2) .. controls (534.3,146.2) and (326.3,143.2) .. (301.3,140.2) .. controls (276.3,137.2) and (269.04,129.08) .. (293.3,118.2) ;
%Straight Lines [id:da4771043152567882] 
\draw    (159.78,32.74) -- (159.78,328.2) ;
\draw [shift={(159.78,29.74)}, rotate = 90] [fill={rgb, 255:red, 0; green, 0; blue, 0 }  ][line width=0.08]  [draw opacity=0] (8.93,-4.29) -- (0,0) -- (8.93,4.29) -- cycle    ;
%Straight Lines [id:da4987393349043512] 
\draw  [dash pattern={on 4.5pt off 4.5pt}]  (160.3,132.2) -- (664.3,126.2) ;

% Text Node
\draw (147.49,4.44) node [anchor=north west][inner sep=0.75pt]    {$y$};
% Text Node
\draw (125,118.4) node [anchor=north west][inner sep=0.75pt]    {$y_{0}$};
\end{tikzpicture}

Figure 6: The Angenent oval $\gamma$ (in red), $\mathcal G(-1)$ (in purple) and $b(-1, y)$ (in green). 
\end{center}

Let $\gamma(t)$ be the CSF with $\gamma(-1) = \gamma$. Since $\{ \gamma (t)\}$ shrinks to a point and the area enclosed by $\gamma(t)$ decreases with a constant rate $2\pi$ \cite[Appendix B, Proposition 1]{B}, $\gamma(t)$ is defined at least up to time $T$. On the other hand, since $\gamma$ is disjoint from $\mathcal G(-1)$ and $\mathcal B(-1)$,  $\gamma(t)$ is also disjoint to $\mathcal G(t)$, $\mathcal B(t)$ for all $t\in [-1, T]$ \cite[Theorem 5.8]{MP}. Then $g_t(y_0) < b(t, y_0)$ for all $t\in [-1, T]$ and in particular $b(T, y_0)>0$ by the choice of $T$. This implies that $b(T, y) > 0$ for all $y \in [y_0, b(T))$. 

It remains to show that $b(T, y) >0$ for all $y\in (0,y_0)$. Since $b(t, y)\to +\infty$ as $y\to 0^+$, there is $y^0 >0$ so that $b(t, y) > g_t(y)$ for all $t\in [-1, T]$ and $y\in (0,y^0]$. In particular, we have $b (t, y^0)> g_t(y^0)$ for all $t\in [-1, T]$. Thus $b\ge g$ on the parabolic boundary of $ [-1, T] \times [y^0, y_0]$. By the comparison principle \cite[Theorem 4.3]{MP}, $b\ge g$ on $ [-1, T] \times [y^0, y_0]$. This finishes the proof since $g(T, y)$ is positive for all $y$. 

\end{proof}

From now on, we fix $L>\overline L $. Let $T^{e}$ be the time where $\mathcal B(t)$ becomes embedded. 

Next we define ${\mathcal B}_n(t)$ by parabolic rescaling and space time translations: Given any sequence $\{B_n\}_{n=1}^\infty$ of positive numbers, define 
\begin{equation} \label{barrier B_n(t) definition}
\mathcal B_n (t) = \frac{1}{a_n} \mathcal B(a_n^2 (t+C_n)) + (0,2\pi h_{n-1}), 
\end{equation}
Write 
\begin{equation}
\mathcal B_n(t) = \{ x = b_n(t,y) : 2\pi h_{n-1} < y\le b_n(t)\}^{re}. 
\end{equation}
Then
\begin{equation} \label{area bounded by B_n}
\int_{2\pi h_{n-1} }^{b_n(t)} b_n(t, y) dy =  L a_n^{-2}
\end{equation}
and $\mathcal B_n(t)$ becomes embedded when $a_n^2 (t+C_n) = T^{e}$, or 
\begin{equation} \label{B_n embedded time}
t= T^{e}_n := -C_n + T^{e}/a_n^2. 
\end{equation}

\begin{lem} \label{Lemma b_n >g_n}
For any $n\in \mathbb N$, $b_n (t, \cdot)> g_n( t, \cdot)$.
\end{lem}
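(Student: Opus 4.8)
The approach is a maximum-principle comparison of the graph functions $g_n$ and $b_n$, both of which solve the graphical CSF equation (\ref{CSF equation by graph}).

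\textbf{Matching the approximating flows.} Recall $g_n=\lim_{\alpha}g_n^\alpha$ with $\mathcal C_n^\alpha(-\alpha)=\overline{\mathcal C}_n(-\alpha)$, and $\mathcal B_n(t)=\frac{1}{a_n}\mathcal B(a_n^2(t+C_n))+(0,2\pi h_{n-1})$ with $\mathcal B=\lim_{\alpha'}\mathcal B^{\alpha'}$, $\mathcal B^{\alpha'}(-\alpha')=\overline{\mathcal B}(-\alpha')$. I would fix $\alpha>C_n+1$ and compare $\mathcal C_n^\alpha$ with the rescaled barrier flow built from $\mathcal B^{\alpha'}$ for the choice $\alpha'=a_n^2(\alpha-C_n)$, which makes both flows start at $t=-\alpha$. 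Unwinding the definitions of $G_n^\pm$, $\mathcal G_n^\pm$, the gluing (II), (III) of Remark \ref{gluing}, and of $\overline b$ shows that at $t=-\alpha$ the rescaled barrier curve, restricted to $\{y>2\pi h_{n-1}\}$, is built from exactly the two translators $\mathcal G_n^-(-\alpha+C_n)$ and $\mathcal G_n^+(-\alpha+C_n-La_n^{-2})$, glued by the same straight segments at $x=\pm a_n^{-1}$ to the three points $(0,2\pi h_{n-1})$, $(0,2\pi h_{n-1}+\pi a_n^{-1})$, $(0,2\pi h_n)$; that is, it coincides there with $\overline{\mathcal C}_n(-\alpha)$. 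Thus at $t=-\alpha$ the two graphs are \emph{equal} on $(2\pi h_{n-1},2\pi h_n)$, and the only difference between the two subsequent flows is that the top cell of $\mathcal B_n$ is attached to the stationary line $\{y=2\pi h_{n-1}\}$ it contains, whereas the top cell of $\mathcal C_n$ is attached to the cells $\mathcal G_{n-1}^{+},\dots$ below it.

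\textbf{The comparison.} On the region where both graphs are defined I would apply the comparison principle \cite[Theorem 4.3]{MP}. At $t=-\alpha$ the graphs agree; at the top (where $y$ is near $2\pi h_n$ and near the right end of the barrier graph) both drop to the common boundary value on the symmetry axis; the essential point is the lateral boundary at $y=2\pi h_{n-1}$. Since $\mathcal B_n(t)$ contains the stationary line $\{y=2\pi h_{n-1}\}$ and is $R$-invariant, $b_n(t,2\pi h_{n-1}^+)\le 0$, so it suffices to show $g_n(t,2\pi h_{n-1})\le b_n(t,2\pi h_{n-1}^+)$. For this I would use that the number of intersection points of $\mathcal C_n(t)$ with the stationary line $\{y=2\pi h_{n-1}\}$ is non-increasing (by Sturm's theorem, using \cite[Theorem 1.1]{A91}) and already minimal at $t=-\alpha$; combined with the $R$-symmetry and the location of $\mathcal C_n(t)$ relative to its neighbouring Grim Reapers (Lemma \ref{g_m < G^-} and the analogous bound for $\mathcal G_{n-1}^+$), this keeps the relevant branch of $\mathcal C_n(t)$ on the correct side of the $y$-axis at $y=2\pi h_{n-1}$. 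Granting this, $b_n\ge g_n$ holds on the parabolic boundary of the common region, hence everywhere; letting $\alpha\to\infty$ gives $b_n\ge g_n$ for the ancient solutions, and a final application of \cite[Theorem 1.1]{A91} with the strong maximum principle --- as at the end of the proof of Lemma \ref{g_m ge g_n} --- improves this to the strict inequality $b_n>g_n$.

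\textbf{Expected main obstacle.} Because the two graphs start equal and have the same shape, the entire inequality is generated at the lateral boundary $y=2\pi h_{n-1}$. The hard part is to verify precisely that the self-intersection of $\mathcal C_n(t)$ near $(0,2\pi h_{n-1})$ does not drift strictly to the right of the symmetry axis. This is exactly what the extra horizontal segment in the definition of $\overline{\mathcal B}$ is designed to exploit, and it is the step that must genuinely use the coupling of the top cell of $\mathcal C_n$ to the cells below it; everything else is the standard comparison-principle machinery already used earlier in this section.
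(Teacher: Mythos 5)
Your overall skeleton matches the paper's proof: rescale so that the approximating flows start at the same time, observe that $\overline{\mathcal B}_n(-\alpha)$ and $\overline{\mathcal C}_n(-\alpha)$ coincide in $\{y>2\pi h_{n-1}\}$, compare the graphical solutions of (\ref{CSF equation by graph}), let $\alpha\to\infty$, and upgrade to a strict inequality by the strong maximum principle as at the end of Lemma \ref{g_m ge g_n}; this is exactly what the paper does (``arguing as in Lemma \ref{g_m ge g_n} and Remark \ref{CSF starting at Lipschitz curve}'').

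However, your treatment of the lateral boundary $y=2\pi h_{n-1}$ --- which you single out as the crux --- rests on a false premise. The stationary line belongs only to the initial broken curve $\overline{\mathcal B}_n(-\alpha)$; for $t>-\alpha$ the flow $\mathcal B^\alpha_n(t)$ is a union of two graphs over $y\in(2\pi h_{n-1},\,b^\alpha_n(t)]$ that are merely \emph{asymptotic} to that line, and the strand $b^\alpha_n(t,\cdot)$ relevant to the inequality $b_n>g_n$ (the one later intersected with $\{x=M\}$, $M>0$, in Proposition \ref{embedded of C_n(t)}) satisfies $b^\alpha_n(t,y)\to+\infty$ as $y\to 2\pi h_{n-1}^{+}$, not $b_n(t,2\pi h_{n-1}^{+})\le 0$. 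Hence the bottom lateral boundary is harmless: the barrier blows up and dominates the finite values of $g^\alpha_n$ there, precisely as in the application of \cite[Theorem 4.3]{MP} in Lemma \ref{g_m < G^-}. The step you call the ``hard part'' --- keeping the self-intersection of $\mathcal C_n(t)$ near $(0,2\pi h_{n-1})$ to the left of the $y$-axis via a Sturm-type count of intersections with the horizontal line --- is therefore both unnecessary and unsubstantiated: a graph over $y$ meets every horizontal line in its domain exactly once, so that intersection count carries no information about the sign of $g_n(t,2\pi h_{n-1})$, and you give no argument that would control this sign. The genuinely delicate points are instead the endpoints on the $y$-axis where the graphs become vertical (the top of the barrier and the points $y^{\alpha,\pm}_{k,n}(t)$), where one must argue as in Lemma \ref{g_m ge g_n}: approximate from below by smooth immersions as in Remark \ref{CSF starting at Lipschitz curve}, and combine Angenent's finiteness of intersections with the strong maximum principle for local graphs over $x$. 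You cite that machinery in passing, but the boundary analysis as you wrote it would not yield the lemma.
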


\begin{proof}
Let 
\begin{align*}
\overline{\mathcal B}_n(t) = \frac{1}{a_n} \overline{\mathcal B} ( a_n^2 (t + C_n)) + (0,2\pi h_{n-1}).
\end{align*}
From the construction of $\mathcal B(t)$, one sees that $\mathcal B_n(t)$ is the limit of $\mathcal B^\alpha_n(t)$ as $\alpha \to +\infty$, where for any $\alpha > C_n + 1/a_n^2$, $\mathcal B^\alpha_n(t)$ is the CSF with 
\begin{align*}
\mathcal B^\alpha_n(-\alpha) = \overline{\mathcal B}_n(-\alpha).
\end{align*}
Also, from the construction of $\overline{\mathcal B}(t)$ and $\overline{\mathcal C}_n(t)$ in Remark \ref{gluing}, $\overline{\mathcal B}_n(t)=\overline{\mathcal C}_n(t)$ in $\{ y > 2\pi h_{n-1}\}$. 

Arguing similarly as in the proof of Lemma \ref{g_m ge g_n} and Remark \ref{CSF starting at Lipschitz curve}, $b^\alpha_n \ge g^\alpha_n$ in the region $y > 2\pi h_{n-1}$. Hence $b_n \ge g_n$ by taking $\alpha \to \infty$. This implies $b_n > g_n$ by the strong maximum principle.
\end{proof}

Note that we cannot argue directly by maximum principle and Lemma \ref{Lemma b_n >g_n} that $\mathcal C_n(t)$ becomes embedded in the region $\{ y > 2\pi h_{n-1}\}$. To show the embeddedness, we need to control the area bounded between $b_n$ and $g_n$ in that region.

\begin{prop} \label{embedded of C_n(t)}
Let $n$ be fixed. For all $\delta>0$, there is $B^e_n >0$ depending only on $a_{n-1}, a_n, \delta$ so that for all $B_n \ge B_n^e$, the immersed curve $\mathcal C_n(T^e_n + \delta)$ is embedded in the region $\{ y \ge 2\pi h_{n-1}\}$. 
\end{prop}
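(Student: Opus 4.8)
\textbf{Proof proposal for Proposition \ref{embedded of C_n(t)}.}

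The plan is to compare $\mathcal C_n(t)$ in the region $\{y \ge 2\pi h_{n-1}\}$ with the barrier $\mathcal B_n(t)$, using the fact (Lemma \ref{Lemma b_n >g_n}) that $g_n(t,\cdot) < b_n(t,\cdot)$ together with a lower barrier, and then to leverage the area bound via Lemma \ref{C^0 control of C}. The key idea is that embeddedness of $\mathcal C_n$ in this region is controlled by the sign of $g_n(t,y)$ for $2\pi h_{n-1} < y < y^-_{n,n}(t)$: since $\mathcal C_n(t)$ is the $^{re}$-doubling of a graph, the self-intersection near the $n$-th bump disappears precisely when $g_n(t,y)$ becomes positive there, and the graph $g_n(t,\cdot)$ on $[2\pi h_{n-1}, y^+_{n,n}(t)]$ has exactly one local maximum (inherited from the Grim Reaper gluing structure and preserved by the CSF via the Sturmian/number-of-intersection-with-horizontal-lines argument of \cite{A91}). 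So I want to show that, for $B_n$ large and at time $T^e_n + \delta$, the graph $g_n(T^e_n+\delta, \cdot)$ restricted to $[2\pi h_{n-1}, y^+_{n,n}]$ stays strictly positive.

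First I would set up the comparison: on the interval where the $n$-th bump of $\mathcal C_n$ lives, $g_n$ lies below $b_n$ (Lemma \ref{Lemma b_n >g_n}), and $b_n$ by construction becomes embedded (strictly positive) exactly at $t = T^e_n$ and stays so afterward, with $b_n(T^e_n+\delta, y)$ bounded below by some $\rho = \rho(a_n,\delta) > 0$ away from the endpoints. So it suffices to bound $g_n$ from below. For this I would use a \emph{lower} barrier: note $g_n \ge o_n$ in this region once $t > -C_n - 1$ (stated in the text after the construction of $\mathcal O_n$), but $o_n$ by itself may still be negative; instead I want to apply Lemma \ref{C^0 control of C} to the rescaled picture. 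Rescale by $a_n$ and translate so that $\mathcal C_n$ on $\{y \ge 2\pi h_{n-1}\}$ becomes a CSF of curves graphical over an interval $I$ of length $\le 4\pi$ (it is the slab $\{2\pi h_{n-1} < y < 2\pi h_n\}$ rescaled, of width $\pi a_n^{-1} + \cdots$, which after rescaling by $a_n$ has bounded width — here the two-slab structure of $\mathcal O_n$ vs. the single bump needs a small check), lying above the model $r(t,y)$ (either a Grim Reaper translate $G(y)-t$ or $o_0$), with the single-maximum property (1) and the area bound (3) supplied by Lemma \ref{area bound between g_n and o_n} (and Lemma \ref{area between g_n and g_m} to absorb the $g_{n-1}$-to-$g_n$ discrepancy). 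The point of choosing $B_n$ large is precisely to make $C_n = B_1 + \cdots + B_n$ large, hence $-t = -(T^e_n+\delta) \sim C_n$ large, so that the "time depth" $M$ available before time $T^e_n + \delta$ (during which $\mathcal C_n$ has the required graphical structure, roughly $t \in [-\alpha, -C_n - 1]$ pushed forward) exceeds the threshold $\underline M$ from Lemma \ref{C^0 control of C}. This gives a uniform upper bound $E$ on $\max_y g_n(T^e_n+\delta,\cdot) - R_T$.

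Having a uniform \emph{upper} bound on the bump height, I would then run a second comparison to get the lower bound: the region between $g_n$ and $b_n$ (or between $g_n$ and the embedded barrier) has area $\le A_n$ (combining Lemmas \ref{Lemma b_n >g_n}, \ref{area bound between g_n and o_n}, \ref{area between g_n and g_m}), and since the bump has bounded height and bounded width, if $g_n$ dipped below $-\epsilon$ somewhere strictly inside $I$ it would force, together with $b_n \ge \rho > 0$, a definite amount of area trapped between the two graphs over a definite-length subinterval — contradicting the smallness of the area once we also use the gradient estimate \cite[Corollary 5.3]{ESIII} / interior curvature estimate \cite[Corollary 3.2]{EH} (valid because $M$ is large) to convert "$g_n < -\epsilon$ at a point" into "$g_n < -\epsilon/2$ on an interval of definite length". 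Quantitatively this is the same mechanism as in the proof of Lemma \ref{C^0 control of C}, just applied to the gap $b_n - g_n$ from below rather than $g_n - r$ from above. Choosing $B_n^e$ large enough that $\underline M$ is met and that the forced area exceeds $A_n + (\text{small})$ gives the contradiction, so $g_n(T^e_n+\delta, y) > 0$ throughout $[2\pi h_{n-1}, y^+_{n,n}]$, i.e. $\mathcal C_n(T^e_n+\delta)$ is embedded in $\{y \ge 2\pi h_{n-1}\}$.

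The main obstacle I anticipate is bookkeeping the geometry of the region: $\mathcal O_n$ naturally lives over \emph{two} sub-slabs (the $G^-_n$ part and the $G^+_n$ part), and the "single local maximum" hypothesis (1) of Lemma \ref{C^0 control of C} must be verified for the relevant portion of $g_n$ — this should follow from the Sturm-type control on the number of intersections of $g_n(t,\cdot)$ with horizontal lines (the Grim Reaper configuration at $t = -\alpha$ has the right count, and \cite[Theorem 1.1]{A91} prevents new critical points from forming), but it requires care near the gluing point $y = 2\pi h_{n-1} + \pi a_n^{-1}$. A secondary technical point is ensuring the window $[T-M, T]$ of validity genuinely has $M \to \infty$ as $B_n \to \infty$ while $T = T^e_n + \delta$; since $T^e_n = -C_n + T^e/a_n^2$ and $\mathcal C_n$ is graphical in $y$ on $\{y > 2\pi h_{n-1}\}$ for all $t$ in its (ancient) domain of definition, and $-C_n \to -\infty$, this should be fine, but the precise interval on which all three hypotheses of Lemma \ref{C^0 control of C} hold simultaneously needs to be pinned down.
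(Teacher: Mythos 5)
Your comparison objects (the rescaled barrier $\mathcal B_n(t)$ via Lemma \ref{Lemma b_n >g_n}, the gradient estimate of \cite[Corollary 5.3]{ESIII}, and an area-versus-positivity argument) are the right ones, but there is a genuine gap in the area bookkeeping, and it sits exactly where the hypothesis $B_n\ge B^e_n$ has to do its work. You propose to reach a contradiction between a \emph{fixed} area bound (``area between $g_n$ and $b_n$ is $\le A_n$'', assembled from Lemmas \ref{Lemma b_n >g_n}, \ref{area bound between g_n and o_n}, \ref{area between g_n and g_m}) and a ``forced'' trapped area produced by a hypothetical point where $g_n\le 0$. But that forced area is \emph{small}, not large: with $|g_n'|\le C$ and the barrier's local minimum $b_\delta>0$ at time $T^e_n+\delta$, a point $y_0$ with $g_n(T^e_n+\delta,y_0)\le 0$ only traps an area of order $b_\delta^2/C$ between the graphs of $g_n$ and $b_n$. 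A quantity of that size will never exceed a fixed constant such as $A_n$ or $L\sum a_k^{-2}$, no matter how large $B_n$ is, so the contradiction does not close. What is actually needed — and what the paper proves, arguing as in \cite[Lemma 3.5]{AQ} — is the inequality in the \emph{opposite} direction: for every $\epsilon>0$ one can choose $M$ and $B^e_n$ so large that for all $B_n\ge B^e_n$ the area trapped between $b_n(t,\cdot)$ and $g_n(t,\cdot)$ in the region $\{y>2\pi h_{n-1}\}$ (cut off along $\{x=M\}$) is less than $\epsilon$ for all relevant $t$. None of the lemmas you invoke gives this smallness; they give fixed bounds, or bounds relative to $o_n$ and $g_m$ rather than to $b_n$. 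Once the $\epsilon$-smallness is available, your mechanism runs correctly but with the roles reversed: the small-area estimate first pins the zeros $y^+_{n-1,n}(T^e_n)$, $y^-_{n,n}(T^e_n)$ within $\delta_1$ of the zero $b^e$ of $b_n(T^e_n,\cdot)$, and then choosing $\epsilon<b_\delta^2/8C$ forces $g_n(T^e_n+\delta,\cdot)>0$ on $(b^e-\delta_1,b^e+\delta_1)$, giving embeddedness in $\{y\ge 2\pi h_{n-1}\}$.

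Two further points. First, your proposal never establishes that $\mathcal C_n(t)$ exists up to time $T^e_n+\delta$ at all: the gluing construction only guarantees the flow for $t<-C_n-1$, while $T^e_n>-C_n-1$, and you cannot appeal to Corollary \ref{C_n becomes embedded and defined up to T_0}, which is proved after (and by means of) this Proposition. The paper obtains existence from the same small-area estimate: since $\mathcal B_n(t)$ becomes embedded and converges to a line, its curvature is uniformly bounded by some $C$, and for $\epsilon<\pi/C^2$ the ball of radius $\sqrt{\epsilon/\pi}$ touching the local maximum of $b_n(t,\cdot)$ from below has area $\epsilon$, so $g_n(t,\cdot)$ must enter it; this keeps the $n$-th bump of $\mathcal C_n$ alive up to $T^e_n+\delta$. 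Second, the detour through Lemma \ref{C^0 control of C} (an upper bound on the bump height) is not needed for this Proposition and cannot substitute for the missing smallness estimate; in the paper that lemma is used later, in Proposition \ref{epsilon closedness - inductive prop} and Theorem \ref{epsilon closedness for all n}, not here, and verifying its hypotheses against $r=o_0$ or $r=G-t$ for the rescaled $n$-th bump is beside the point for embeddedness.
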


\begin{proof}
Let $M>0$. For any time $t <T^{e}_n$ and $\alpha >-t$, the graph $\{x = b^\alpha_n(t, y)\}$ intersects $\{x = M\}$ at either one or three points (depending on $t$). Let $y=b^\alpha_{n,M} (t)$ be the smallest $y$-coordinates of those points. Similarly, 
$$\{x = g^\alpha_n(t, y) :  y^{\alpha,-}_{n-1,n}(t)\le y\le y^{\alpha, +}_{n-1,n}(t)\}$$ 
intersect $\{x = M\}$ at two points and we let $y^{\alpha}_{n,M}(t)$ be the larger $y$-coordinates among those two. Then for any $\epsilon>0$, as in the proof of \cite[Lemma 3.5]{AQ}, one can find $M, B^e_n$ large so that if $B_n \ge B^e_n$, the region (see Figure 5) bounded by
\begin{itemize}
\item $\{ (M, y) :y^{\alpha}_{n,M}(t)\le  y\le b^\alpha_{n,M} (t) \}$, 
\item $\{ x = b_n^\alpha(t, y):  b^\alpha_{n,M} (t) \le y\le b^\alpha_n(t)\}$, 
\item $\{ (0,y): y^{\alpha, +}_{n,n} (t)\le y\le b^\alpha_n (t)\}$, and 
\item $\{x = g^\alpha_n(t, y): y^{\alpha}_{n,M}(t) \le y\le y^{\alpha, +}_{n,n} (t)\}$. 
\end{itemize} 
has area less than $\epsilon$. Taking $\alpha \to +\infty$, one sees that the area of the region bounded by
\begin{itemize}
\item $\{ (M, y) :y_{n,M}(t)\le  y\le b_{n,M} (t) \}$, 
\item $\{ x = b_n(t, y):  b_{n,M} (t) \le y\le b_n(t)\}$, 
\item $\{ (0,y): y^{+}_{n,n} (t)\le y\le b_n (t)\}$, and 
\item $\{x = g_n(t, y): y_{n,M}(t) \le y\le y^{+}_{n,n} (t)\}$. 
\end{itemize} 
is less than $\epsilon$ at all time $t$, where 
$$y_{n,M}(t):= \lim_{\alpha\to \infty} y^{\alpha}_{n, M}(t),\ \ b_{n,M} (t):=\lim_{\alpha \to \infty} b^\alpha_{n, M} (t).$$

\begin{center}

\tikzset{every picture/.style={line width=0.75pt}} %set default line width to 0.75pt        

\begin{tikzpicture}[x=0.75pt,y=0.75pt,yscale=-0.7,xscale=0.7]
%uncomment if require: \path (0,200); %set diagram left start at 0, and has height of 200

%Curve Lines [id:da5424567842432721] 
\draw [fill={rgb, 255:red, 126; green, 211; blue, 33 }  ,fill opacity=0.25 ]   (150.3,89.8) .. controls (120.3,92.8) and (91.3,99.8) .. (74.3,109.8) .. controls (57.3,119.8) and (57.3,129.8) .. (56.3,134.8) .. controls (55.3,139.8) and (57.3,152.8) .. (70.3,160.8) .. controls (83.3,168.8) and (125.3,175.8) .. (149.3,176.8) ;
%Curve Lines [id:da8919098011015651] 
\draw [color={rgb, 255:red, 126; green, 211; blue, 33 }  ,draw opacity=1 ][fill={rgb, 255:red, 126; green, 211; blue, 33 }  ,fill opacity=0.25 ][line width=1.5]    (150.3,30.63) .. controls (177.3,30.8) and (405.3,33.8) .. (425.3,37.8) .. controls (445.3,41.8) and (456.3,46.8) .. (456.3,62.8) .. controls (456.3,78.8) and (458.3,81.8) .. (436.3,88.8) .. controls (414.3,95.8) and (234.3,99.8) .. (150.3,104.8) ;
%Curve Lines [id:da5525953639236538] 
\draw [color={rgb, 255:red, 126; green, 211; blue, 33 }  ,draw opacity=1 ][fill={rgb, 255:red, 255; green, 255; blue, 255 }  ,fill opacity=1 ][line width=1.5]    (150.3,104.8) .. controls (122.54,106.58) and (94.3,112.8) .. (87.3,116.8) .. controls (80.3,120.8) and (72.3,124.8) .. (73.3,136.8) .. controls (74.3,148.8) and (77.3,152.8) .. (97.52,157.63) .. controls (117.75,162.47) and (123.64,164.54) .. (149.64,165.54) ;
%Curve Lines [id:da21263019707847275] 
\draw [draw opacity=0][fill={rgb, 255:red, 126; green, 211; blue, 33 }  ,fill opacity=0.25 ]   (149.3,176.8) .. controls (201.3,179.8) and (626.3,198.8) .. (632.3,192.8) .. controls (638.3,186.8) and (632.3,187.8) .. (632.3,182.8) .. controls (632.3,177.8) and (620.3,180.8) .. (597.3,181.8) .. controls (574.3,182.8) and (171.3,168.8) .. (149.64,165.54) ;
%Curve Lines [id:da8726184836045561] 
\draw [fill={rgb, 255:red, 255; green, 255; blue, 255 }  ,fill opacity=1 ]   (150.3,41.86) .. controls (191.3,41.86) and (348.3,44.35) .. (392.3,46.85) .. controls (436.3,49.34) and (442.3,58.08) .. (442.3,66.81) .. controls (442.3,75.54) and (436.51,78.15) .. (410.3,80.53) .. controls (384.09,82.92) and (171.3,88.8) .. (150.3,89.8) ;
%Straight Lines [id:da9197394859392798] 
\draw [line width=1.5]    (150.3,41.86) -- (150.3,30.63) ;
%Straight Lines [id:da3421899543868123] 
\draw [color={rgb, 255:red, 128; green, 128; blue, 128 }  ,draw opacity=1 ] [dash pattern={on 4.5pt off 4.5pt}]  (632.3,203.8) -- (633.3,25.94) ;
%Straight Lines [id:da6407967921819773] 
\draw [color={rgb, 255:red, 128; green, 128; blue, 128 }  ,draw opacity=1 ] [dash pattern={on 4.5pt off 4.5pt}]  (151.3,194.8) -- (150.14,134.3) -- (150.46,75.3) -- (150.76,18.92) ;
%Curve Lines [id:da8581923433239869] 
\draw [color={rgb, 255:red, 126; green, 211; blue, 33 }  ,draw opacity=1 ][line width=1.5]    (149.64,165.54) .. controls (208.3,171.8) and (575.3,183.8) .. (632.3,182.8) ;
%Curve Lines [id:da783237184971262] 
\draw    (149.3,176.8) .. controls (209.3,180.8) and (564.3,196.8) .. (632.3,192.8) ;
%Straight Lines [id:da7197886311703154] 
\draw    (632.3,182.8) -- (632.3,192.8) ;

% Text Node
\draw (129,3.11) node [anchor=north west][inner sep=0.75pt]    {$x=0$};
% Text Node
\draw (601,6.87) node [anchor=north west][inner sep=0.75pt]    {$x=M$};
% Text Node
\draw (450,20.91) node [anchor=north west][inner sep=0.75pt]  [color={rgb, 255:red, 126; green, 211; blue, 33 }  ,opacity=1 ]  {$b^{\alpha }_{n}( t,\ y)$};
% Text Node
\draw (5,78.06) node [anchor=north west][inner sep=0.75pt]    {$g^{\alpha }_{n}( t,\ y)$};

\end{tikzpicture}

Figure 7: the area is less than $\epsilon$. 
\end{center}

By Lemma \ref{B(t) becomes embedded} and (\ref{barrier B_n(t) definition}), $\mathcal B_n(t)$ becomes embedded at some time. Then it is proved in \cite{Po} (see also \cite[Theroem 2.5]{H}) that $\mathcal B_n(t)$ are aymsptotic and converge to the straight line $\{ y=0\}$ as $t\to +\infty$. Thus the curvature of $\mathcal B_n(t)$ are uniformly bounded (say, by $C=C_{\mathcal B_n}$). Let $\epsilon < \pi/C^2$, Then for each $t\le T^{e}_n$, the ball of radius $\sqrt{\epsilon /\pi}$ touching the local maximum of $b_n( t,y)$ from below has area $\epsilon$. Thus $g_n(t,y)$ intersects this ball. In particular, for any $\delta >0$, one can choose $\epsilon$ small enough so that whenever $B_n \ge B^e_n$, the curve $\mathcal C_n(t)$ is defined at least up to $T^{e}_n+\delta$. 

At time $t=T^{e}_n$, the function $b_n(t,y)$ intersects $\{x=0\}$ at two points $b_n(T^e_n)$ and $b^e$ with $b^e < b_n(T^e_n)$. Let $\delta_1$ be a positive number so that
$$\delta_1 < \min \{ b^e - 2\pi h_{n-1}, b_n(T^e_n)-b^e\} .$$
By choosing a smaller $\epsilon$ (thus a larger $B^e_n$) if necessary, we assume 
$$\min \left\{ \int_{b^e-\delta_1} ^{b^e} b_n(T^e_n,y) dy, \int_{b^e}^{b^e+\delta_1} b_n(T^e_n,y) dy \right\}  >\epsilon.$$
The area estimates between $g_n$ and $b_n$ implies that $g_n(T^e_n,y)$ satisfies 
$$ b^e -\delta_1 < y^+_{n-1, n}(T^e_n) < b^e < y^-_{n,n} (T^e_n) < b^e+\delta_1,$$
that is, $g_n(T^e_n, \cdot)$ is positive outside $(b^e -\delta_1, b^e+\delta_1)$ in the region $\{ y\ge 2\pi h_{n-1}\}$. Thus it suffices to show that $g_n$ is positive inside $(b^e-\delta_1, b^e+\delta_1)$ at time $t = T^{e}_n + \delta$.
 
To see this, let $b_\delta$ be the local minimum of $b_n(T^e_n + \delta,y)$ in $(b^e-\delta_1, b^e+ \delta_1)$ (we assume that $\delta>0$ is small so that $b_\delta$ exists). Note that in $(b^e-\delta_1, b^e+\delta_1)$, by the gradient estimates \cite[Corollary 5.3]{ESIII}, we have 
\begin{equation} \label{gradient estimates locally}
|g'_n (T^e_n + \delta, \cdot)|\le C,
\end{equation}
for some universal constant $C$. Now we argue by contradiction that $g_n(T^n_e+\delta, \cdot)$ is positive in $(b^e-\delta_1, b^e+\delta_1)$: if not, then $g_n(T^e_n+\delta, y_0)\le 0$ for some $y_0\in (b^e-\delta_1, b^e+\delta_1)$. then $g_n(T^e_n+\delta,y) \le b_\delta/2$ for all $y\in (b^e-\delta_1, b^e+\delta_1)$ with $|y-y_0| < b_\delta/2C$ by (\ref{gradient estimates locally}). Thus 
$$ \int_{I} (b_n(t,y) - g_n(t,y) ) dy \ge b^2_{\delta}/8C.$$
for some interval $I\subset (b^e-\delta_1, b^e+\delta_1)$. But this is impossible if $\epsilon < b^2_\delta/8C$. Thus $g_n$ is positive at time $t = T^{e}_n+\delta$ for small enough $\epsilon$ in the region $\{ y\ge 2\pi h_{n-1}\}$. 

\end{proof}

From now on we assume that in the constructions of $\{\mathcal C_n(t)\}$ we have chosen $B_n \ge B^e_n$ for all $n\in \mathbb N$, where $B^e_n$ is chosen as in Proposition \ref{embedded of C_n(t)} with $\delta=1$.

\begin{cor} \label{C_n becomes embedded and defined up to T_0}
For each $n\in \mathbb N$, assume that $B_m \ge B_m^e$ for all $m=1, 2, \cdots, n$. Then $\mathcal C_n(t)$ becomes embedded and shrinks to a point. $\mathcal C_n(t)$ are all defined up to time $T_0$.
\end{cor}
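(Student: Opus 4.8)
The plan is to prove both assertions together by induction on $n$. For $n=0$ there is nothing to prove, since $\mathcal C_0(t)=\mathcal O_0(t)$ is the Angenent oval, which is embedded, is defined on $(-\infty,T_0)$, and shrinks to a round point. So suppose the statement holds for $n-1$: there is a time $\tau_{n-1}<T_0$ such that $\mathcal C_{n-1}(t)$ is embedded for $t\ge\tau_{n-1}$, shrinks to a point, and is defined on $(-\infty,T_0)$. I will exhibit a time $\tau_n$ at which the closed immersed curve $\mathcal C_n(\tau_n)$ is embedded; the remaining conclusions then follow from Grayson's theorem (an embedded closed curve stays embedded under the flow and contracts to a round point) together with a short comparison argument bounding the extinction time from below.

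The key point is that every self-intersection of $\mathcal C_n(t)$ lies at an interior zero of $g_n(t,\cdot)$ on the $y$-axis, and that these zeros $y^{\pm}_{j,n}(t)$ occur in the order $(\ref{zeros of C^alpha_n})$ with $g_n(t,\cdot)$ changing sign at each of them. Put $\tau_n:=\max\{\tau_{n-1},\,T^e_n+1\}$; since $T^e_n=-C_n+T^e/a_n^2\to-\infty$ and $\tau_{n-1}<T_0$, taking the $B_m$ large enough (consistent with our standing assumption $B_m\ge B^e_m$) we may assume $\tau_n<T_0$, and one checks $\tau_n>-C_n-1$. Writing $\delta:=\tau_n-T^e_n\ge1$ and using that the threshold $B^e_n$ in Proposition \ref{embedded of C_n(t)} may be taken nonincreasing in $\delta\ge1$ (as its proof shows), Proposition \ref{embedded of C_n(t)} guarantees that $\mathcal C_n(t)$ is defined up to $\tau_n$ and that at $t=\tau_n$ the two ``new'' interior zeros $y^+_{n-1,n}$ and $y^-_{n,n}$, which for $t<T^e_n$ lie in $\{y>2\pi h_{n-1}\}$, have annihilated; consequently $g_n(\tau_n,\cdot)>0$ on the interval $[\,y^-_{n-1,n}(\tau_n),\,y^+_{n,n}(\tau_n)\,)$.

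It remains to control $g_n(\tau_n,\cdot)$ over the region $\{y<2\pi h_{n-1}\}$. By the inductive hypothesis $g_{n-1}(\tau_n,\cdot)>0$ throughout its interior domain, with no interior zero, and by Lemma \ref{g_m ge g_n} we have $g_n(\tau_n,\cdot)>g_{n-1}(\tau_n,\cdot)$ wherever both are defined; since $g_n>g_{n-1}=0$ at the endpoints of the latter's domain as well, $g_n(\tau_n,\cdot)>0$ on the closed interval $[\,y^-_{0,n-1}(\tau_n),\,y^+_{n-1,n-1}(\tau_n)\,]$. The same strict inequality forces $y^-_{0,n}(\tau_n)<y^-_{0,n-1}(\tau_n)$ and $y^-_{n-1,n}(\tau_n)<y^+_{n-1,n-1}(\tau_n)$, and combined with the alternating sign of $g_n(\tau_n,\cdot)$ between consecutive zeros this gives $g_n(\tau_n,\cdot)>0$ also on the two ``transition'' bands $(\,y^-_{0,n}(\tau_n),\,y^-_{0,n-1}(\tau_n)\,)$ and $(\,y^+_{n-1,n-1}(\tau_n),\,2\pi h_{n-1}\,)$. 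Assembling the pieces, $g_n(\tau_n,\cdot)>0$ on the whole open interval $(\,y^-_{0,n}(\tau_n),\,y^+_{n,n}(\tau_n)\,)$, so $\mathcal C_n(\tau_n)$ is embedded. Grayson's theorem then shows $\mathcal C_n(t)$ is embedded for all $t\ge\tau_n$ and contracts to a round point at a time $T^{\mathrm{ext}}_n>\tau_n$. Finally, by Lemma \ref{g_m ge g_n} applied to the indices $0<n$ we have $g_n(\tau_n,\cdot)>g_0(\tau_n,\cdot)$ with nested domains, so the Angenent oval $\mathcal C_0(\tau_n)$ lies strictly inside the embedded curve $\mathcal C_n(\tau_n)$; by the avoidance principle \cite[Theorem 5.8]{MP} it stays inside $\mathcal C_n(t)$ for all $t\in[\tau_n,T^{\mathrm{ext}}_n)$, and since the enclosed area of $\mathcal C_n(t)$ tends to $0$ as $t\to T^{\mathrm{ext}}_n$ while that of $\mathcal C_0$ stays positive for $t<T_0$, one gets $T^{\mathrm{ext}}_n\ge T_0$. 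Thus $\mathcal C_n(t)$ is defined on $(-\infty,T_0)$, completing the induction.

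The step I expect to be the main obstacle is the handling of the two ``transition'' bands in the third paragraph — ruling out a surviving interior zero of $g_n(t,\cdot)$ in the thin region between the part of the curve controlled by the inductive hypothesis on $\mathcal C_{n-1}$ and the part controlled by Proposition \ref{embedded of C_n(t)}. The combinatorial argument above (the ordering of the $y^{\pm}_{j,n}(t)$ together with the sign alternation) should suffice, but a more robust route is a Sturmian-type count: the number of interior zeros of $g_n(t,\cdot)$ is nonincreasing in $t$ and no interior zero is ever created, so it is enough to certify this number is $0$ at the single slice $t=\tau_n$. A secondary, bookkeeping issue is that Proposition \ref{embedded of C_n(t)} is invoked at $t=\tau_n$ rather than at $T^e_n+1$; this is legitimate precisely because $B^e_n$ can be taken monotone in $\delta$, and one must also track that the $B_m$ were fixed large enough to ensure $\tau_n<T_0$.
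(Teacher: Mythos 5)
Your overall strategy is the same as the paper's (Proposition \ref{embedded of C_n(t)} for the region $\{y\ge 2\pi h_{n-1}\}$, the comparison $g_n>g_{n-1}$ of Lemma \ref{g_m ge g_n} plus induction for the region below, and $g_n>g_0$ together with avoidance for survival up to $T_0$), but there is a genuine gap in how you use Proposition \ref{embedded of C_n(t)}. You invoke it at $t=\tau_n$ with $\delta=\tau_n-T^e_n$; unwinding your recursion, $\tau_n=\max_{j\le n}\{T^e_j+1\}$ is essentially $T^e_1+1$, so $\delta$ is of order $B_2+\cdots+B_n$ and depends on the very constants $B_j$ being constrained, so the requirement $B_n\ge B^e_n(\delta)$ is circular unless $B^e_n(\delta)$ is bounded in $\delta$. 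Your parenthetical ``as its proof shows'' is not supported: the proof of Proposition \ref{embedded of C_n(t)} explicitly assumes $\delta$ small so that the local minimum $b_\delta$ of the barrier exists, and the admissible area $\epsilon$ (hence the threshold $B^e_n$) is tied to $b_\delta^2$; no monotonicity in $\delta$ is established, and for large $\delta$ the barrier $\mathcal B_n(t)$ flattens toward $\{y=2\pi h_{n-1}\}$ and that argument breaks down. The paper uses the proposition only with the fixed choice $\delta=1$ and then asserts positivity of $g_n$ in $\{y>2\pi h_{n-1}\}$ for all later times; the way to justify that persistence is not to re-run the barrier construction at a much later time, but to note that $\{x=0\}$ is a static solution of the flow, so by Angenent's theorem no intersection of $\mathcal C_n(t)$ with the $y$-axis is ever created, and the surviving zeros cannot migrate upward past points controlled by $g_n>g_{n-1}$.

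The transition-band step is also not closed as stated: sign alternation between consecutive zeros does not by itself exclude a pair of zeros of $g_n(\tau_n,\cdot)$ sitting entirely inside a band (below the bottom endpoint of $g_{n-1}$'s domain, or between its top endpoint and $2\pi h_{n-1}$). What rules this out is a no-crossing argument: a zero of $g_n$ can never coincide with an endpoint of $g_{n-1}$'s domain, since $g_n>g_{n-1}=0$ there by Lemma \ref{g_m ge g_n}, nor with an endpoint of $g_n$'s own domain, and since zeros are never created, continuity shows no zero ever enters the bands. Your fallback ``Sturmian count'' is circular as phrased, since certifying that the number of interior zeros is $0$ at $t=\tau_n$ is precisely the claim to be proved; the useful content of the Sturmian theorem here is non-creation and persistence, as above. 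The remaining ingredients --- Grayson's theorem for persistence of embeddedness and shrinking to a round point, and the avoidance/area comparison with the enclosed Angenent oval to force the extinction time up to $T_0$ --- are fine and correspond to the paper's ``since $g_n>g_0$'' conclusion, though note you also quietly strengthen the hypothesis by asking the $B_m$ to be large enough that $\tau_n<T_0$, which is not part of the corollary's assumptions.
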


\begin{proof}
From the choice of $B^e_n$, by Proposition \ref{embedded of C_n(t)} we have $g_n(t,y)>0$ in the region $y > 2\pi h_{n-1}$ after time $T^e_n+1$. Since $g_n (t,y) > g_k(t,y)$ when $n> k$, an inductive argument shows that $g_n$ becomes positive after some time and thus $\mathcal C_n(t)$ becomes embedded. Since $g_n > g_0$, this also shows that $\mathcal C_n(t)$ is defined up to time $T_0$. 
\end{proof}

\begin{dfn} \label{closedness definition}
For any $ t<-C_n-1$, we say that an immersed curve $\mathcal C$ is close to $\mathcal C_n(t)$ if the followings hold: 
\begin{enumerate}
\item $\mathcal C$ can be written as
$$ \mathcal C = \{ x= g_{\mathcal C} (y) :  y^-_0 \le y\le y^+_n\}^{re}$$
for some function $g_{\mathcal C} : [y^-_0, y^+_n] \to \mathbb R$ with $g_{\mathcal C}(y^-_0) = g_{\mathcal C} (y^+_n) = 0$. 
\item $2\pi h_n + \pi a^{-1}_{n+1}> y^+_n > y^-_0 > -\pi$;
\item $g_{\mathcal C} (y)\ge g_n(t,y)$ and the area bounded between $g_{\mathcal C}(y)$ and $g_n(t,y)$ is less than $L \sum_{k=n}^\infty a_k^{-2}$;
\item $g_{\mathcal C} (y) < G_m^- (y) + a_m(t+C_m)$ for $m=1, 2, \cdots, n$.
\item $g_{\mathcal C}$ has exactly $n+1$ local maximum and $n$ local minimum. 
\end{enumerate}
\end{dfn}

\begin{prop} \label{epsilon closedness - inductive prop}
Let $n \in \mathbb N$ be fixed. Then there are $\overline M_n, \underline M_n>0$ depending only on $\{a_n\}$ such that the following holds: For all $M_n\ge \underline M_n$ and $B_n \ge \max\{ \overline M_n, B^e_n\}$, let $\mathcal C$ be an immersed curve that is close to $\mathcal C_n(-C_n - M_n)$. Then $\mathcal C(-C_n-1+t)$ is close to $\mathcal C_{n-1}(-C_n-1 + t)$ whenever $t \in (\overline M_n, B_n)$. Here $\mathcal C(t)$ is the CSF with $\mathcal C(-C_n-M_n)= \mathcal C$.
\end{prop}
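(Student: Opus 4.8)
The plan is to run a continuity/maximum-principle argument on the graph functions, propagating the five conditions in Definition \ref{closedness definition} forward from time $-C_n-M_n$ to times $-C_n-1+t$ with $t\in(\overline M_n,B_n)$. Conditions (3) and (4) are the quantitative ones and the content of the proposition is that the area bound (3) is essentially preserved while the figure 8 at the top (the piece coming from $\mathcal G_n^\pm$) unfolds and disappears, leaving a curve close to $\mathcal C_{n-1}$. First I would set up notation: write $\mathcal C(t)$ as the graph of $g_{\mathcal C}(t,\cdot)$ (using that the flow stays graphical in $y$, which follows from the structure of the barriers), and recall that $\mathcal C$ being close to $\mathcal C_n(-C_n-M_n)$ gives the initial bounds $g_{\mathcal C}\ge g_n$, $g_{\mathcal C}<G_m^-+a_m(\cdot+C_m)$ for $m\le n$, and area bound $L\sum_{k\ge n}a_k^{-2}$.

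Next I would establish the persistence of the ordering and barrier conditions under the flow. For (3) (lower bound $g_{\mathcal C}\ge g_{n-1}$ on the new time interval): since $g_n>g_{n-1}$ by Lemma \ref{g_m ge g_n} and $g_{\mathcal C}\ge g_n$ initially, apply the comparison principle (Theorem 4.3 of \cite{MP}) together with the strong maximum principle / Angenent's intersection-number result \cite[Theorem 1.1]{A91}, handling the endpoints by representing the curves locally as graphs of $x$ exactly as in the proof of Lemma \ref{g_m ge g_n}; this gives $g_{\mathcal C}(t,y)\ge g_{n-1}(t,y)$. For the area part of (3), I would bound the signed area between $g_{\mathcal C}$ and $g_{n-1}$ by the sum of (i) the signed area between $g_{\mathcal C}$ and $g_n$, which is nonincreasing (or controlled) by the area-evolution formula (\ref{derivative of area under graph}) since the new curve picks up only the exponentially small tip-angle contributions, and (ii) the signed area between $g_n$ and $g_{n-1}$, which is exactly $La_n^{-2}$ by Lemma \ref{area between g_n and g_m}; summing with the initial bound $L\sum_{k\ge n}a_k^{-2}$ and absorbing the $o(1)$ tip errors into the still-available slack (choosing $\overline M_n$ large) yields the required bound $L\sum_{k\ge n-1}a_k^{-2}$. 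The upper barriers (4) for $m\le n-1$ persist by the same comparison-principle argument used in Lemma \ref{g_m < G^-}, using that $G_m^-+a_m(\cdot+C_m)\to+\infty$ at the ends of its slab. For (2), the reach of the graph in $y$ stays in the stated range because the barrier $\mathcal B_n(t)$ (which becomes embedded and converges to $\{y=0\}$) confines the top region, so $y^+$ drops back below $2\pi h_{n-1}+\pi a_{n-1}^{-1}$ once $t>\overline M_n$.

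The genuinely hard step — the one I expect to be the main obstacle — is condition (5): showing that after time $\overline M_n$ the curve $\mathcal C(-C_n-1+t)$ has exactly $n$ local maxima and $n-1$ local minima, i.e.\ that the topmost figure-8 loop has truly unfolded and merged rather than merely shrunk. This is where the $C^0$-estimate Lemma \ref{C^0 control of C} is the key tool: applied with $r=o_0$ (and with the area bound $A=L\sum_{k\ge n-1}a_k^{-2}$, which is finite and independent of $n$ only through $\{a_n\}$), it forces the height of the relevant graphical pieces over the reference curve to be bounded by a constant $E$ depending only on $\{a_n\}$, once the elapsed time $M_n$ exceeds the threshold $\underline M_n$ from that lemma. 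Combined with the area bound, a bounded-height region of small area must be thin, so the loop's two branches are forced together; then Angenent's intersection-number monotonicity \cite[Theorem 1.1]{A91}, comparing $\mathcal C(t)$ with the embedded barrier $\mathcal B_n(t)$ and with $\mathcal C_{n-1}(t)$, shows the number of self-intersections has dropped by exactly one and no new critical points appear, giving the critical-point count in (5). I would choose $\overline M_n$ to be the maximum of the time needed for $\mathcal B_n$ to become embedded (shifted, from (\ref{B_n embedded time})), the time needed for the tip-angle integrals to be small enough, and $\underline M_n$ from Lemma \ref{C^0 control of C}; and $B_n\ge\max\{\overline M_n,B_n^e\}$ so that the interval $(\overline M_n,B_n)$ is nonempty and $\mathcal C_n$ is already controlled by Proposition \ref{embedded of C_n(t)}. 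Condition (1) (graphicality of the final curve) then follows from (5) together with the confinement from the barriers.
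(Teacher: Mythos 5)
Your overall architecture matches the paper's: reduce to conditions (2) and (5) of Definition \ref{closedness definition}, propagate (3) and (4) by the comparison arguments of Lemmas \ref{g_m ge g_n}, \ref{area between g_n and g_m} and \ref{g_m < G^-}, and use Lemma \ref{C^0 control of C} together with the area bounds to get quantitative control, with $\overline M_n$ a maximum of times independent of $B_n$. However, the two steps that carry the actual content are not established. For (2), you confine the top of the curve by comparing with $\mathcal B_n(t)$, but no comparison between $\mathcal C(t)$ and $\mathcal B_n(t)$ is available: Lemma \ref{Lemma b_n >g_n} only gives $b_n>g_n$, while $\mathcal C$ is merely area-close to $\mathcal C_n$ from above and may cross $\mathcal B_n$ initially, so the maximum principle cannot be invoked (and being below $b_n$ as a graph in $y$ would in any case not by itself cap the $y$-extent $y^+$). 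What the paper does instead is use the conclusion of Lemma \ref{C^0 control of C} (applied with $r=o_0$ and with the Grim Reaper, via Lemmas \ref{area bound between g_n and o_n} and \ref{area between g_n and G^-_n}) to get a width bound $|x|\le E_n$ on the portion of $\mathcal C(-C_n-1)$ above $y^+_{\mathcal C,n-1}$, and then builds a \emph{new} barrier $\mathcal P_n$ (a piece of $\mathcal G^-_n(-C_n-1)$ capped by the segments $\{x=E_n\}$ and $\{y=2\pi h_n+\pi a_{n+1}^{-1}\}$, reflected) which genuinely encloses that portion; since $\mathcal P_n(t)$ converges to the line $\{y=2\pi h_{n-1}\}$, condition (2) holds after a time $T_{\mathcal P_n}$ independent of $B_n$.

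For (5), your argument ("bounded height plus small area forces the branches together, then intersection-number monotonicity against $\mathcal B_n$ and $\mathcal C_{n-1}$ makes the loop disappear") does not close. Thinness alone does not give extinction of the top loop by a time independent of $B_n$ and of the particular curve $\mathcal C$; comparison with $\mathcal C_{n-1}(t)$ is vacuous for this purpose because $g_{\mathcal C}\ge g_n>g_{n-1}$ keeps the two curves disjoint, so their intersection number is zero and says nothing about critical points of $g_{\mathcal C}$; and $\mathcal B_n$ is unavailable as above. The paper's mechanism is different and is the step you are missing: take a Grim Reaper of width larger than $2E_n$, symmetric about the $y$-axis and translating downward at constant speed, chosen (using only $E_n,h_n,a_n$) to intersect $\mathcal C(-C_n-1)$ in exactly two points with $|x|>E_n$; it exits the region $\{y\ge 2\pi h_{n-1}\}$ by a definite time $T^2_n$, and if a local minimum of $g_{\mathcal C}$ persisted, a continuity argument produces a time at which the translating Grim Reaper passes through the local-minimum point, where its convexity forces an additional intersection, contradicting the monotonicity of intersection numbers from \cite{A91}. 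Without the barrier $\mathcal P_n$ and this sweeping-Grim-Reaper argument (or substitutes for them), the proposal does not prove conditions (2) and (5). (Minor points: the threshold in (2) should be $2\pi h_{n-1}+\pi a_n^{-1}$, not $2\pi h_{n-1}+\pi a_{n-1}^{-1}$; and the area in (3) is most cleanly handled by noting that the signed area between the two evolving graphs is constant in time, as in Lemma \ref{area between g_n and g_m}, rather than "nonincreasing up to tip errors".)
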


\begin{proof}
Since $g_{\mathcal C}(\cdot) >g_n (-C_n -M_n, \cdot)$, $\mathcal C(t)$ is defined at least up to time $T_0$ by Corollary \ref{C_n becomes embedded and defined up to T_0}. We write 
\begin{equation}
\mathcal C(t) = \{ x = g_{\mathcal C} (t,y) : y_-(t) \le y\le y_+(t)\}^{re},
\end{equation}
where $g_{\mathcal C} (-C_n-M_n,\cdot)=g_{\mathcal C} (\cdot)$ and $g_{\mathcal C} (t,y)$ satisfies (\ref{CSF equation by graph}).

To show that $\mathcal C(t)$ is close to $\mathcal C_{n-1}(t)$ for some $t$, we need only to show (2) and (5) in Definition \ref{closedness definition}: (1) is true for all $t$, (3) follows from Lemma \ref{g_m ge g_n} and Lemma \ref{area between g_n and g_m}, while (4) can be proved similarly as in the proof of Lemma \ref{g_m < G^-}.

By (3) and (4) we have 
$$ g_n (-C_n-M_n, y) \le g_{\mathcal C} (y) \le G^-_m (y) + a_mM_n.$$
Arguing as in the proof of Lemma \ref{g_m ge g_n} and Lemma \ref{g_m < G^-}, we have
\begin{equation} \label{g_n <g_C< G^-_n}
 g_n (t, y) \le g_{\mathcal C} (t, y) \le G^-_m (y) + a_m(t+C_m)
 \end{equation}
for all $-C_n - M_n< t < -C_n - 1$ and $m=1,2, \cdots, n$. Just like $g_n(t, y)$, for all $t < -C_n-1$, $g_{\mathcal C} (t, y)$ intersects $\{x=0\}$ at 
$$ y^-_{\mathcal C, 0} (t) < y^+_{\mathcal C, 0} (t)< \cdots < y^-_{\mathcal C, n-1} (t) < y^+_{\mathcal C, n-1} (t)< y^-_{\mathcal C, n}(t) <y^+_{\mathcal C, n}(t).$$

Using Lemma \ref{area between g_n and G^-_n} and Lemma \ref{C^0 control of C} (with suitable parabolic rescalings the reflection $R$), there are $E^-_n, \underline M^-_n >0$ depending only on $\{a_n\}$ so that
$$g_n (-C_n-1, y) +a_n \ge -E^-_n, \ \ \forall y\in (y^+_{n-1, n}(-C_n-1) , y^-_{n,n}(-C_n-1))$$ 
whenever $M_n \ge \underline M^-_n$. By (\ref{g_n <g_C< G^-_n}), we have 
\begin{equation} 
g_{\mathcal C} (-C_n-1, y)\ge -E^-_n- a_n, \ \ \forall y\in (y^+_{\mathcal C, n-1}(-C_n-1), y^-_{\mathcal C, n}(-C_n-1)) .
\end{equation}
Similarly, using Lemma \ref{area bound between g_n and o_n} and Lemma \ref{C^0 control of C} (with suitable parabolic rescalings and $A = L \sum a_n^{-2} + A_n$), there are $E^+_n, \underline M^+_n >0$ such that if $M_n \ge \underline M^+_n$, then 
\begin{equation}
g_{\mathcal C} (-C_n-1, y) - a_n(1+ La_n^{-2}) \le E^+_n, \ \ \forall y \ge y^-_{\mathcal C, n}(-C_n-1). 
\end{equation}
Choose
$$\underline M_n = \max\{ \underline M^-_n, \underline M^+_n\}, \ \ E_n = \max\{ E^-_n + a_n, E^+_n +a_n + La_n^{-1}\}.$$
Then whenever $M_n \ge \underline M_n$, $|x| \le E_n$ for all $(x, y)\in \mathcal C(-C_n-1)$ and $y\ge y^+_{\mathcal C, n-1} (-C_n-1)$. 

Next we construct another barrier $\mathcal P_{n}$. 
At time $t=-C_n-1$, $\mathcal G^-_n(-C_n-1)$ intersect $x= E_n$ at two points $p_n< p^n$. Define
\begin{equation*}
\mathcal P_n = X^{re}, 
\end{equation*}
where
\begin{align*}
X = \{ \mathcal G^-_{n} (-C_n-1) : y \le p_n\} \cup \{  E_n\}\times [ p_{n} , 2\pi h_{n} + \pi a_{n+1}^{-1}]\cup [0, E_n] \times \{ 2\pi h_{n} + \pi a_{n+1}^{-1}\}.
 \end{align*}
Let $\mathcal P_{n}(t)$ be the CSF with $\mathcal P_{n} (0) = \mathcal P_n$. For all $t > 0$, $\mathcal P_{n} (t)$ is graphical in $y$ when $x>0$, and $\mathcal P_n(t)$ is given by 
\begin{equation}
 \mathcal P_{n}(t) = \{ x = p_{n} (t,y) : 2\pi h_{n-1} < y \le p^{n}(t)\}^{re}.
\end{equation}
Also $p_{n} (t, \cdot)> g_{\mathcal C}( -C_n-1+t,\cdot)$ for all $t >0$ by maximum principle. Note that $\mathcal P_n (t)$ is an entire graph in $x$ for $t >0$. 
 
From \cite{Po}, \cite{H}, the CSF $\mathcal P_n(t)$ tends to $\{ y = 2\pi h_{n-1}\}$ uniformly as $t\to \infty$. Thus there is $T^1_{\mathcal P_n}>0$ so that $\mathcal P_n(t)$ lies between $\{ y = 2\pi h_{n-1}\}$ and $\{ y = 2\pi h_{n-1} + \pi a_n^{-1}\}$ for all $t > T_{\mathcal P_n}$. The maximum principle implies that $\mathcal C(t)$ also lies in $\{ y < 2\pi h_{n-1} +\pi a_n^{-1}\}$ for all $t > -C_n-1 + T_{\mathcal P_n}$. 

On the other hand, we have 

\noindent {\bf Claim:} There is $T^2_n>0$ independent of $B_n$ so that $\mathcal C(-C_n-1 + T^2_n)$ has exactly $n$ local maximum and $n-1$ local minimum. 

\begin{proof}[Proof of Claim:] Let $\mathcal G(t)$ be a Grim Reaper which
\begin{enumerate}
\item [(i)] is symmetric about the $y$-axis with width larger than $2E_n$, 
\item [(ii)] moves along the negative $y$ direction, and
\item [(iii)] at $t=0$, $\mathcal G(0)$ intersects $\mathcal C(-C_n-1)$ at two points $(\pm x_0, y_0)$ in the region $\{ y\ge 2\pi h_{n-1}\}$ with $|x_0|>E_n$.
\end{enumerate} 
Note $\mathcal G(t)$ can be chosen using only $E_n, h_{n}, a_n$ and is independent of the curve $\mathcal C(-C_n-1)$. Since $\mathcal G(t)$ moves at constant speed in the negative $y$-direction, there is $T^2_n>0$ so that $\mathcal G (T^2_n)$ do not intersect the region $\{y \ge 2\pi h_{n-1}\}$. We argue that the function $g_{\mathcal C}(-C_n-1+T^2_n, y)$ has no local maximum and local minimum in the region $\{y\ge 2\pi h_{n-1}\}$, except at the boundary. 

We argue by contradiction. If $g_{\mathcal C}( -C_n-1+T^2_n, \cdot)$ does has a local minimum in $\{ y\ge 2\pi h_{n-1}\}$, then the same is true for the function $g_{\mathcal C}(-C_n-1+t, \cdot)$ for all $t \in [0, T^2_n]$ by maximum principle. For any $t$, let the local maximum and local minimum of $g_{\mathcal C}(-C_n-1+t, \cdot)$ be occurred at $a(t), b(t)$ respectively, with $ y^+_n(-C_n-1+t) > a(t) >b(t)$. Let $e(t)\in \mathbb R$ so that $\mathcal G(t)$ intersects $\mathcal C(-C_n-1+t)$ at $(\pm x_t, e(t))$, and let $t_{max}$ be the time when $\mathcal G(t_{max})$ and $\mathcal C(-C_n-1 + t_{max})$ intersect at only one point (by symmetry the point of intersection is $(0,y^n(-C_n-1+ t_{max})$). 

Note that when $t$ is small, we have $b(t) > e(t)$ and at $t = t_{max}$, 
$$e(t_{max}) = y^n(-C_n-1+ t_{max}) > b(t_{max}).$$ 
By continuity, there is a time $t \in (0,t_{max})$ so that $e(t) = b(t)$. But this is impossible: since $\mathcal G(t)$ is convex, if $\mathcal G(t)$ and $\mathcal C(-C_n-1+t)$ does intersect at the local minimum, it must also intersect at yet another point. This is impossible since the number of intersections along two CSF is non-increasing by maximum principle. This finishes the proof of the claim. 
\end{proof}
  
With the above Claim we finish the proof of Proposition. Let $\overline M_n= \max\{ T_{\mathcal P_n}, T^2_n\}$ and assume that $B_n \ge \max\{ \overline M_n, B^e_n\}$. Then for all $t\in (\overline M_n, B_n)$, $-C_n-1 + t < -C_n -1 + B_n = -C_{n-1}-1$. By the choice of $\overline M_n$, we see that whenever $t\ge \overline M_n$, (2) and (5) are also satisfied for $\mathcal C(-C_n-1 +t)$, and thus $\mathcal C(-C_n-1 +t)$ is close to $\mathcal C_{n-1} (-C_{n-1}-1 + t$). 
\end{proof}

\section{Proof of Theorem \ref{Compact example}}
In this section, we prove Theorem \ref{Compact example} \label{Proof of Thm}

\begin{thm} \label{epsilon closedness for all n}
One can choose $B_n>B^e_n$, $N_n>0$ and $t_n\in (-C_{n+1}, -C_n-1)$ so that for all $m\ge n$, $\mathcal C_m(t_n)$ is close to $\mathcal C_n(t_n)$ and the curvatures of $\mathcal C_m(t_n)$ are uniformly bounded by $N_n$.  
\end{thm}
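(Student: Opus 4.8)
The plan is to prove the theorem purely by iterating Proposition~\ref{epsilon closedness - inductive prop}, choosing the free parameters so that, for every $m\ge n$, the successive descents carry $\mathcal C_m$ into ``closeness to $\mathcal C_n$'' on one and the same interval of times, which will contain the common value $t_n$. Concretely, with $\{a_n\}$ and $L>\overline L$ already fixed, the thresholds $\underline M_k,\overline M_k$ of Proposition~\ref{epsilon closedness - inductive prop}, the numbers $B^e_k$ of Proposition~\ref{embedded of C_n(t)} (taken with $\delta=1$), and the largest threshold $\underline M^{(k)}$ of Lemma~\ref{C^0 control of C} over the finitely many $k$-dependent values of $A$ that occur below, all depend only on $\{a_n\}$ and $L$, never on any $B$. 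I would therefore set $M_k=\max\{\underline M_k,2\}$ for all $k$ (so each $M_k>1$) and then pick $B_k$ so large that
\[
B_k\ \ge\ \max\bigl\{\,\overline M_k,\ B^e_k,\ \overline M_k+M_{k-1},\ \overline M_k+\underline M^{(k)}\,\bigr\}+20 ,
\]
which involves no circularity, fixes all the ancient solutions $\mathcal C_k(t)$ via Theorem~\ref{C_n(t) ancient}, and yields $B_k>B^e_k$. Finally I would put $t^\ast_{n+1}=\overline M_{n+1}+\underline M^{(n+1)}+10$ and $t_n=-C_{n+1}-1+t^\ast_{n+1}$; the size of $B_{n+1}$ makes $t^\ast_{n+1}\in(\overline M_{n+1}+1,B_{n+1}-1)$, so $t_n\in(-C_{n+1},-C_n-1)$ and, crucially, $t_n$ does not depend on $m$.

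For the descent, first note that $\mathcal C_m(s)$ is close to $\mathcal C_m(s)$ in the sense of Definition~\ref{closedness definition} for every $s<-C_m-1$: conditions (1), (2), (5) are the structural description of $\mathcal C_m(s)$ recorded after Theorem~\ref{C_n(t) ancient}, condition (3) holds because the area is $0$, and condition (4) is Lemma~\ref{g_m < G^-}. If $m=n$ this already gives the closeness assertion, since $t_n<-C_n-1$. If $m>n$, I would apply Proposition~\ref{epsilon closedness - inductive prop} at the levels $k=m,m-1,\dots,n+1$ in turn: granting that $\mathcal C_m(-C_k-M_k)$ is close to $\mathcal C_k(-C_k-M_k)$, the proposition (applied with $\mathcal C=\mathcal C_m(-C_k-M_k)$, whose CSF is $\mathcal C_m$ itself) gives that $\mathcal C_m(-C_k-1+t)$ is close to $\mathcal C_{k-1}(-C_k-1+t)$ for all $t\in(\overline M_k,B_k)$. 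For $k>n+1$ I would read this at $t=t'_k:=B_k+1-M_{k-1}$, which lies in $(\overline M_k,B_k)$ precisely because $M_{k-1}>1$ and $B_k>\overline M_k+M_{k-1}-1$; since $-C_k-1+t'_k=-C_{k-1}-M_{k-1}$, this is exactly the hypothesis needed at level $k-1$, so the induction continues. For $k=n+1$ I would instead read it at $t=t^\ast_{n+1}$, obtaining closeness of $\mathcal C_m(t_n)$ to $\mathcal C_n(t_n)$ with the same $t_n$ for every $m\ge n$.

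It remains to bound the curvatures of $\mathcal C_m(t_n)$ uniformly in $m$. Since Proposition~\ref{epsilon closedness - inductive prop} in fact yields closeness on the whole range of times $-C_{n+1}-1+t$ with $t\in(\overline M_{n+1},B_{n+1})$, and $t_n$ was placed at distance $\ge\underline M^{(n+1)}$ inside it, $\mathcal C_m(t)$ stays close to $\mathcal C_n(t)$ for all $t\in[t_n-\underline M^{(n+1)},t_n]$ and all $m\ge n$. On this interval the $2n+1$ monotone-sign lobes of $\mathcal C_m(t)$ each lie in a fixed $y$-slab of width $\le\pi$ and, after the same parabolic rescaling and reflection $R$ used in the proof of Proposition~\ref{epsilon closedness - inductive prop}, satisfy the hypotheses of Lemma~\ref{C^0 control of C} — with reference $G(y)-t$ for the Grim-Reaper lobes and $o_0(t,y)$ for the $\mathcal C_0$-lobe — the needed area bounds being supplied by Lemmas~\ref{area bound between g_n and o_n}, \ref{area between g_n and G^-_n} and \ref{area between g_n and g_m}. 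Hence $\sup_{\mathcal C_m(t)}|x|$ is bounded uniformly in $m$ for $t\in[t_n-\underline M^{(n+1)},t_n]$, so each $\mathcal C_m(t)$ lies in a fixed compact set and is, in finitely many ($n$-dependent) coordinate patches, a graph with $m$-uniform $C^0$ bounds; the gradient estimate \cite[Corollary~5.3]{ESIII} then bounds the slopes, and the interior curvature estimate \cite[Corollary~3.2]{EH} (the flow has existed for time $\ge\underline M^{(n+1)}$) bounds the curvature of $\mathcal C_m(t_n)$ by some $N_n$ independent of $m$.

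The step I expect to be the main obstacle is the coordination in the descent: every $\mathcal C_m$ must be carried through the \emph{same} early times $-C_{m-1}-M_{m-1},\dots,-C_{n+1}-M_{n+1}$, so that the last application of Proposition~\ref{epsilon closedness - inductive prop} produces closeness to $\mathcal C_n$ on one $m$-independent window of times containing $t_n$. Forcing the intermediate evaluation times $t'_k$ to land inside the admissible range $(\overline M_k,B_k)$ is what imposes the constraints $M_{k-1}>1$ and $B_k>\overline M_k+M_{k-1}$, and one has to verify these are jointly satisfiable — which they are precisely because the $M_k$ depend only on $\{a_n\}$ and can be fixed before the $B_k$ are taken large. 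By comparison, once $m$-uniform closeness on a time interval is in hand, the curvature bound is routine.
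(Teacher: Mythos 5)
Your proposal is correct and follows essentially the same route as the paper: one iterates Proposition \ref{epsilon closedness - inductive prop} from level $m$ down to $n$ after choosing each $B_k$ large compared with the ($\{a_n\}$-dependent, $B_k$-independent) thresholds so that the intermediate evaluation times land inside the admissible windows and the final time $t_n$ is independent of $m$, and then deduces the uniform curvature bound from the closeness structure via Lemma \ref{C^0 control of C}, the gradient estimate of Evans--Spruck and interior parabolic estimates. The only differences are bookkeeping: you fix $M_k$ in advance and evaluate near the end of each window, whereas the paper takes $t_k=-C_{k+1}+\overline M_{k+1}$ at the start of the window, implicitly using $M_k=B_{k+1}-\overline M_{k+1}\ge\underline M_k$; this does not change the argument.
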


\begin{proof}
Let $\underline M_n$ and $\overline M_n$ be chosen as in Proposition \ref{epsilon closedness - inductive prop}. Note that the choice of $\overline M_n$, $\underline M_n$ depends only on the sequence $\{ a_n\}$.

Now we choose $\{ B_n\}$ so that for all $n\in \mathbb N$, $B_n \ge \{ B^e_n, \underline M_{n-1} + \overline M_n\}$. Then we have 
\begin{equation} \label{final choice of B_n}
-C_n + \overline M_n < -C_{n-1} - \underline M_{n-1},  \ \ \forall n\in \mathbb N.
\end{equation}
Let $t_{n} = -C_{n+1} +\overline M_{n+1}$. Let $m, n\in \mathbb N$ with $m >n$. By definition, $\mathcal C_m (t_m)$ is close to $\mathcal C_m(t_m)$. Using (\ref{final choice of B_n}) and proposition \ref{epsilon closedness - inductive prop}, $\mathcal C_m(t_{m-1})$ is close to $\mathcal C_{m-1} (t_{m-1})$. After finitely many steps we see that $\mathcal C_m(t_n)$ is close to $\mathcal C_n(t_n)$. 

Lastly, we show that there are $N_1, \cdots, N_n, \cdots$ so that the curvatures of $\mathcal C_m (t_n)$ are bounded by $N_n$ for all $m\ge n$. Since $\mathcal C_m (t_n)$ is close to $\mathcal C_n (t_n)$, the function $g_m (t_n, y)$ has $n+1$ local maximum and $n$ local minimum. Recall $g_m(t_n, y)$ intersects the $y$-axis at 
$$ y^-_{0,m} (t_n) < y^+_{0,m}(t_n) < \cdots < y^-_{n,m}(t_n) < y^+_{n,m} (t_n). $$
For each $k=0, \cdots, n$ and $t \le t_n$, let $\mathcal T_k (t) $ be the $k$-th tip region 
$$ \mathcal T_k (t):= \{ (x, y) \in \mathcal C_m(t) : y\in (y^+_{k-1, m}(t), y^+_{k,m}(t)), |x| > -a_k t_n -1\}.$$  
Note that in the tip regions, one can apply Lemma \ref{C^0 control of C} to obtain uniform $C^0$ distance estimates to the Grim Reapers. The gradient estimates \cite[Corollary 5.3]{ESIII} implies that $|g_m'(t, y)| \le C(n)$ in the tip regions. Since $g_m$ satisfies the graphical CSF equation (\ref{CSF equation by graph}), standard parabolic estimates implies uniform control on $g_m''(t_n, y)$ in the tip regions, which gives a uniform bound for the curvature there. 

Away from the tip regions, $\mathcal C_m(t)$ can be written as a union of $2n$-graphs in $x$ with uniform $C^0$ bounds, at least when $t\in [t_n-1, t_n]$. Then one can similarly obtain gradient estimates and thus uniform bounds on curvatures. 
\end{proof}

\begin{proof}[Proof of Theorem \ref{Compact example}] From Theorem \ref{epsilon closedness for all n}, one can find a sequence of bounded open sets $U_n$ so that $\overline{U_{n}} \subset U_{n+1}$, and $\mathcal C_m (t_n) \subset U_n$ for all $m\ge n$. The curvatures of $ \mathcal C_m (t_n)$ are uniformly bounded for $m\ge n$. By choosing a slightly larger time $t_n$ for each $n$ and using the CSF equation, one can assume that all derivatives of curvatures of $\mathcal C_m(t_n)$ are uniformly bounded for $m\ge n$ \cite[Section 3]{BL}. Thus by \cite[Theorem 1.3]{Breuning}, we can pick a diagonal subsequence $\{n_j\}_{j=1}^\infty$ so that for all $n\in \mathbb N$, $\{ \mathcal C_{n_j} (t_n)\}_{j=1}^\infty$ converges smoothly to a compact immersion $\mathcal C^n$. Let $\mathcal C^n(t)$ be the CSF with $\mathcal C^n (t_n) = \mathcal C^n$. Since $\mathcal C_m (t_n)$ converges smoothly to $\mathcal C^n$, the continuous dependence of CSF implies that the sequence of CSF's $\{\mathcal C_m(t) : t_n \le t\le T_0\}$ converges smoothly to $\{\mathcal C^n(t) : t_n \le t\le T_0\}$. In particular, for all $m >n$,  
$$ \mathcal C^m (t) = \mathcal C^n (t), \ \ \ \forall t \ge t_n.$$
Thus we can define, for each $t <T_0$, $\mathcal C(t) = \mathcal C^m (t)$, where $m\in \mathbb N$ and $t_m < t$. Then $\{\mathcal C (t) :  t< T_0\}$ is a CSF so that $\mathcal C(t_n) = \mathcal C^n$ for all $n\in \mathbb N$. Since $\mathcal C (t_n)$ is the limit of $\mathcal C_m(t_n)$ as $m\to \infty$,
$$ \bigcup _{t<T_0} \operatorname{Conv} \mathcal C(t) = \{ (x, y) : y > -\pi\}$$
is a halfspace. Lastly, in the construction we fix a choice of sequence $\{a_n\}$. Since different sequences $\{a_n\}$ lead to ancient solutions with different limit as $t\to -\infty$, the above construction gives an infinite family of ancient solutions. 
\end{proof}

\bibliographystyle{amsplain}

\end{document}